\def\noi{\noindent}
\def\pf{\noi{\bf Proof.\ \,}}
\def\eop{{$\square$}}
\def\labtt#1{\label {#1} }
\def\labttr#1{\label {#1}\rm }
\def\refpp#1{(\ref{#1})}
\def\a{\alpha}
\def\b{\beta}
\def\g{\gamma}
\def\d{\delta}
\def\l{\lambda}
\def\s{\sigma}
\def\th{\theta}
\def\L{\Lambda}
\def\CC{{\mathbb C}}
\def\FF{{\mathbb F}}
\def\MM{{\mathbb M}}
\def\QQ{{\mathbb Q}}
\def\RR{{\mathbb R}}
\def\TT{{\mathbb T}}
\def\ZZ{{\mathbb Z}}
\def\la{\langle}
\def\ra{\rangle}
\def\<{\langle}
\def\>{\rangle}
\def\bs{\it}            
\def\Aut{{\bs Aut}}
\def\dim{{\bs dim}}
\def\exp{{\bs exp}}
\def\Sym{{\bs Sym}}
\def\l{{\lambda}}
\def\half{{1 \over 2}}
\def\third{{1\over 3}}
\def\dual#1{#1^*}        
\def\weh{Weyl({E_8})}
\def\dg#1{{\cal D}({#1})}
\def\vnat{V^\natural}
\def\cvcch{cvcc$\half$ }
\def\psia{\psi a}
\newcommand{\sfr}[2]{\leavevmode\kern-.1em
  \raise.5ex\hbox{\the\scriptfont0 #1}\kern-.1em
  /\kern-.15em\lower.25ex\hbox{\the\scriptfont0 #2}}
\begin{document}

\newtheorem{thm}{Theorem}[section]
\newtheorem{prop}[thm]{Proposition}
\newtheorem{lem}[thm]{Lemma}
\newtheorem{rem}[thm]{Remark}
\newtheorem{coro}[thm]{Corollary}
\newtheorem{conj}[thm]{Conjecture}
\newtheorem{de}[thm]{Definition}
\newtheorem{hyp}[thm]{Hypothesis}

\newtheorem{nota}[thm]{Notation}
\newtheorem{ex}[thm]{Example}
\newtheorem{proc}[thm]{Procedure}

\begin{center}\end{center}

\centerline{ \today  }
\begin{center}
{\Large  A moonshine path from $E_8$ to the monster}

\bigskip 

(previous title of preprint: A $3C$-path for Glauberman-Norton theory)

\begin{center}

\vspace{10mm}
Robert L.~Griess Jr.
\\[0pt]
Department of Mathematics\\[0pt] University of Michigan\\[0pt]
Ann Arbor, MI 48109  USA  \\[0pt]
{\tt rlg@umich.edu}\\[0pt]
\vskip 1cm

Ching Hung Lam
\\[0pt]
Institute of Mathematics \\[0pt]
Academia Sinica\\[0pt]
Taipei 115, Taiwan\\[0pt]
{\tt chlam@math.sinica.edu.tw}\\[0pt]
\vskip 1cm
\end{center}

\begin{abstract}
One would like an explanation of the provocative McKay and Glauberman-Norton observations  connecting the extended $E_8$-diagram with pairs of $2A$ involutions in the Monster sporadic simple group.  We propose a down-to-earth model for the $3C$-case which exhibits a logic to these connections.
\end{abstract}

\end{center}

\tableofcontents

\section{Introduction}

In 1979, John McKay \cite{mckay} noticed a remarkable correspondence between
$\tilde{E_8}$, the extended
$E_8$-diagram, and pairs of  $2A$-involutions in $\mathbb{M}$, the Monster (the
largest sporadic finite simple group).

\begin{equation}\label{McKay diagram}
\begin{array}{l}
  \hspace{184pt} 3C\\
  \hspace{186.2pt}\circ \vspace{-6.2pt}\\
   \hspace{187.5pt}| \vspace{-6.1pt}\\
 \hspace{187.5pt}| \vspace{-6.1pt}\\
 \hspace{187.5pt}| \vspace{-6.2pt}\\
  \hspace{6pt} \circ\hspace{-5pt}-\hspace{-7pt}-\hspace{-5pt}-
  \hspace{-5pt}-\hspace{-5pt}-\hspace{-5pt}\circ\hspace{-5pt}-
  \hspace{-5pt}-\hspace{-5pt}-\hspace{-6pt}-\hspace{-7pt}-\hspace{-5pt}
  \circ \hspace{-5pt}-\hspace{-5.5pt}-\hspace{-5pt}-\hspace{-5pt}-
  \hspace{-7pt}-\hspace{-5pt}\circ\hspace{-5pt}-\hspace{-5.5pt}-
  \hspace{-5pt}-\hspace{-5pt}-\hspace{-7pt}-\hspace{-5pt}\circ
  \hspace{-5pt}-\hspace{-6pt}-\hspace{-5pt}-\hspace{-5pt}-
  \hspace{-7pt}-\hspace{-5pt}\circ\hspace{-5pt}-\hspace{-5pt}-
  \hspace{-6pt}-\hspace{-6pt}-\hspace{-7pt}-\hspace{-5pt}\circ
  \hspace{-5pt}-\hspace{-5pt}-
  \hspace{-6pt}-\hspace{-6pt}-\hspace{-7pt}-\hspace{-5pt}\circ
  \vspace{-6.2pt}\\
  \vspace{-10pt} \\
  1A\hspace{23pt} 2A\hspace{23  pt} 3A\hspace{22pt} 4A\hspace{21pt}
  5A\hspace{21pt} 6A\hspace{20pt} 4B\hspace{19pt} 2B\\
\end{array}
\end{equation}
There are 9 conjugacy classes of such
pairs $(x,y)$, and the
orders of the 8 products $|xy|$, for $x\ne y$,  are the coefficients of
the highest root in the $E_8$-root system.  Thus, the 9 nodes are labeled with 9
conjugacy classes of $\mathbb M$.
There is no obvious reason why there should be such a correspondence involving
high-level theories from different parts of the mathematical universe.

In 2001, George Glauberman and Simon Norton \cite{gn}
enriched this theory by
adding details about the centralizers in the Monster of such pairs of
involutions and relations involving the associated modular forms.
Let $(x,y)$ be such a pair and let $n(x,y)$ be its associated node.  Let
$n'(x,y)$ be  the subgraph of $\tilde E_8$ which is supported at the set of
nodes
complementary to $\{n(x,y)\}$.
If $(x,y)$ is a  pair of  $2A$ involutions and $z$ is a $2B$ involution which
commutes with $\la x, y \ra$, Glauberman and Norton
 give a lot of detail about $C(x, y, z)$.  In particular, they explained how
$C(x, y, z)$ has a ``new'' relation to the extended $E_8$-diagram, namely
that
$C(x, y, z)/O_2(C(x, y, z))$   looks roughly like ``half'' of
the Weyl group corresponding to the subdiagram  $n'(x,y)$.

The important and provocative McKay-Glauberman-Norton observations seemed
like looking across a great foggy space,
from one high mountain top to another.
We want to realize their connections in a manner which is more down-to-earth,
like walking along a path, making natural
steps with familiar mathematical objects.
These objects are lattices, vertex operator algebras, Lie algebras,
Lie groups and finite groups.

In this paper, we propose a specific path for the $3C$-case (i.e.,
$n'(x,y)$ is an $A_8$-diagram). The $3C$-case seems to be especially
rich.  Several Niemeier lattices are involved. They include $E_8^3$
and the Leech lattice $\Lambda$. Triality for $D_4$ plays a role. An
explanation for occurrence of  just ``half'' the Weyl group (of type
$A_8$) arises naturally.  We hope to develop similar paths for other nodes.

\newpage
\subsection
{Compact Summary of Strategy}

This subsection contains a brief outline of   how one may start with a node of the extended diagram $\widetilde {E_8}$ and move to a pair of $2A$-involutions in the monster, $\MM$.

For simplicity, we describe two paths, one beginning with a node of $\widetilde {E_8}$ and the second one beginning with a pair of $2A$-involutions in $\MM$.  Each path ends with a subVOA  generated by a pair of conformal vectors,
for which theories on dihedral subVOAs
give isomorphisms and enable us to splice the paths.
Our Glauberman-Norton path consists of the path from $\widetilde {E_8}$ followed by the reverse of the above path $\MM$.

\bigskip

\noindent {\bf Path starting in $\widetilde {E_8}$:}

\noindent node
$\rightarrow$

\smallskip

\noindent sublattice $K$ of finite index in $E_8$
$\rightarrow$

\smallskip

\noindent element $r\in E_8(\CC )$ of order $|E_8 : K|$, defined by exponentiation
$\rightarrow$

\smallskip

 \noindent \cvcch $e, f$ in $V_{EE_8} \le V_{E_8\oplus E_8}$
$\rightarrow$

\smallskip

\noindent conjugacy of $r$ to an
element $h$ in torus normalizer $N (\TT )$ \\
so $h$ acts on the root lattice without eigenvalue 1
$\rightarrow$

\smallskip

\noindent a pair of $EE_8$ lattices $M, M' < E_8^3$  and \cvcch
$e_M, e_{M'}$ such that\\ $subVOA\la e_M, e_{M'}\ra \cong subVOA\la
e,f \ra$ $\rightarrow$

\smallskip

\noindent Niemeier lattice $N$ with automorphism $h''$ so that
$N^+(h'')$ and  $N_+(h'')$ are related to $K$; find overlattice of
$N^+(h'') \perp N_+(h''))$ isometric to Leech lattice.

\bigskip

\noindent {\bf Path starting in $\MM$:}

\noindent distinct $2A$-involutions $x, y \in \MM$
$\rightarrow$

\smallskip

\noindent $x, y$ correspond to unique \cvcch $e', f'$ (Miyamoto bijection) in $\vnat$; we may replace $x, y$ by conjugates to take $e, f$ in $V_{\L}^+$  

\bigskip

\noindent {\bf At  the endpoints of these two paths}

\noindent Existing results on dihedral subalgebras of VOAs prove that
$subVOA\la e', f'\ra \cong subVOA\la e, f \ra$ if and only if
$n(x,y)$ is the node in the $\widetilde {E_8}$ procedure \cite{LYY2,Sa}.
\bigskip

\noindent {\bf Observation:  }

\noindent We use triality for $D_4$ to find a Leech lattice
as exceptional overlattice of
$N^+(h') \perp N_+(h'))$, resulting in visible
 loss of half the Weyl group, going from $n(x,y)$ to $x, y$ (and then on to $C(x,y,z)$).

\subsection{Details on steps}

Our Glauberman-Norton path starting in $\widetilde {E_8}$ involves several steps, which we preview here.

\noindent \textbf{Step I.} We show that the subdiagram $n'(x,y)$ defines an
automorphism $r=r(x,y)$ of exponential type in $Aut(V_{E_8})$. Then we construct
a pair of conformal vectors of central charge $1/2$ (abbreviated as \cvcch) $e$ and $f$ in $V_{EE_8}$.

Let $e\in V$  be a \cvcch, i.e., the subVOA $Vir(e)$ generated by
$e$ is isomorphic to $L(\sfr{1}2,0)$. It is well known that
$L(\sfr{1}2,0)$ is rational, $C_2$-cofinite and has three
irreducible $L(\sfr{1}2,0)$, $L(\sfr{1}2,\sfr{1}2)$ and
$L(\sfr{1}2,\sfr{1}{16})$ (cf. \cite{dmz}).

Let $V_e(h)$ be the sum of all irreducible $Vir(e)$-submodules of
$V$ isomorphic to $L(\sfr{1}2,h)$ for $h=0,1/2,1/16$. Then one has
an isotypical decomposition:
$$
  V=V_e(0)\oplus V_e(\sfr{1}2)\oplus V_e(\sfr{1}{16}).
$$
Define a linear automorphism $\tau_e$ on $V$ by
$$
  \tau_e=
  \begin{cases}
    \ \  1 & \text{ on }\  V_e(0)\oplus V_e(\sfr{1}2),\\
    -1& \text{ on }\  V_e(\sfr{1}{16}).
\end{cases}
$$
Miyamoto \cite{M1} showed that $\tau_e$ defines an automorphism of
the VOA $V$. This automorphism is often called \textit{the Miyamoto
involution} associated to $e$. It is also known that
$2A$-involutions of $\mathbb{M}$ are in one-to-one correspondence
with conformal vectors of central charge $1/2$ in $V^\natural$
through the construction of Miyamoto involutions \cite{Co,M1}. Thus,
given  a pair of 2A-involutions $x,y$, one can associate a pair of
conformal vectors $e',f'\in V^\natural$ of central charge $1/2$ so
that $x$ is the Miyamoto  involution for $e'$ and $y$ is the
Miyamoto involution for $f'$. By using the above correspondence, one
can show that the dihedral group $\la x,y\ra$ is uniquely determined
by the subVOA generated by $e'$ and $f'$ \cite{Co,Sa,LYY2}.

The diagram $n'(x,y)$
defines an automorphism $r(x,y)$ of $V_{E_8}$ induced by a
character $\lambda$ of $E_8$ with $K= Ker(\lambda)$, i.e.,
\[
r(x,y)(u\otimes e^\a) = \xi^{\lambda(\a+K)} u\otimes e^{\a} \quad \text{for }
u\in M(1), \a \in E_8,
\]
where $\xi$ is a primitive $n$-th
root of unity, $n=|E_8/K|$, $K$ is the root lattice associated to the diagram
$n'(x,y)$ and $M(1)$ is an irreducible $\hat{\mathfrak{h}}$-module.
Here $\mathfrak{h}=\CC\otimes_\ZZ E_8$ and $\hat{\mathfrak{h}}$ is the affine
Lie algebra of $\mathfrak{h}$ (see Section 2 for details).

Let $L=E_8\oplus E_8$ and $M=\{(x,x)|\ x\in E_8\}$. Then $M\cong
EE_8$. Let $e=e_M$ be the \cvcch  defined in Notation \ref{def:phisubx} and
$f=(r(x,y)\otimes 1)(e)$. Then both $e$ and $f$ are \cvcch.

The key observation for this step is the following proposition.
\begin{prop}[cf. \cite{LYY2,Sa}]\label{prop:1.1}
The subVOA $\la e, f\ra$ generated by $e$ and $f$ in $V_{EE_8}$ is isomorphic to
the subVOA in $V^\natural$ generated by the conformal vectors associated to the
$2A$ involutions $x$ and $y$. 
Moreover, the centralizer
of the dihedral group
$\la \tau_e, \tau_f\ra$ in $Aut(V_{EE_8})$ is isomorphic to $2^8{\cdot }Sym_9$, where $\Sym_9$ is the Weyl
group of $A_8$.
\end{prop}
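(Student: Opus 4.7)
I propose to treat the two assertions of the proposition separately: the first by citing the classification of Ising-vector dihedral subalgebras, and the second by a centralizer computation inside $Aut(V_{EE_8})$.

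For the first assertion, the plan is to invoke Sakuma's theorem \cite{Sa} and its refinement in \cite{LYY2}, which classifies subVOAs generated by two \cvcch into exactly nine isomorphism types, indexed by the nodes of $\widetilde{E_8}$. On the $\vnat$ side, Miyamoto's bijection between $2A$-involutions and \cvcch, together with the McKay labelling, identifies $\la e', f'\ra$ as of Sakuma type $n(x,y) = 3C$. To show that $\la e, f\ra$ has the same type, I would compute the two classifying invariants for the explicit pair $(e, f) = (e_M, (r(x,y)\otimes 1) e_M)$: the inner product $\la e, f\ra$ and the order of $\tau_e \tau_f$. The first is a direct calculation from the formula for $e_M$ in Notation~\ref{def:phisubx}, using that $r(x,y)$ is the exponential of a character of $E_8/K$ with $K$ of index $3$ in $E_8$; the second is determined by the order of $r(x,y)$, which is $3$ by construction. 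Comparing these invariants to the $3C$ row of the table in \cite{LYY2} then completes the identification.

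For the second assertion, I would work inside $Aut(V_{EE_8})$, noting that $EE_8 \cong \sqrt{2}E_8$ has minimum norm $4$ and no roots, so the full automorphism group is generated by lifted isometries of $\sqrt{2}E_8$ together with exponential torus automorphisms. The dihedral group $\la \tau_e, \tau_f\ra \cong \Sym_3$ permutes (via its $\Sym_3$-quotient) the triple of $EE_8$-sublattices $M, M', M''$ of $E_8^3$ described in the path outline, with common intersection a finite-index sublattice $K \cong A_8 \subset E_8$. Any element $g$ centralizing $\la \tau_e, \tau_f\ra$ must preserve this triple of sublattices and hence $K$ setwise. The analysis of $g$ then splits into a permutation part acting as an element of $W(A_8) = \Sym_9$, the Weyl group of the diagram $n'(x,y)$, and an exponential part acting trivially on $K$. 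A careful identification of the latter as a $2$-elementary group of rank $8$, coming from the lattice geometry of $E_8^3/A_8$ and the $2$-torsion of the Cartan torus, yields a subgroup of order $2^8 \cdot 9!$; a further argument rules out any additional centralizing automorphisms and pins down the extension as $2^8 {\cdot} \Sym_9$.

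The main obstacle is the second assertion, specifically the upper bound on the centralizer and the extension structure. One must carefully eliminate potentially larger centralizers coming from, e.g., triality-type lifts of $D_4$-subsystems in $E_8$, and identify the exact way $\Sym_9$ permutes the $2^8$-factor. I expect the concrete lattice realization outlined in the introduction, together with the $D_4$-triality emphasized by the authors, to be the key inputs. The subVOA identification in the first assertion, by contrast, reduces to a table lookup once the two invariants $\la e, f\ra$ and $|\tau_e \tau_f|$ have been computed.
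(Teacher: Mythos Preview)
The paper does not prove this proposition; it is quoted from \cite{LYY2,Sa} (note the ``cf.'' in the label), and Remark~\ref{dihedralsubvoas} repeats the first assertion as a citation. So there is no in-paper argument to compare against. Your plan for the first assertion---invoke Sakuma's classification and identify the pair $(e,f)$ as the $3C$-type by computing $\langle e,f\rangle$ and $|\tau_e\tau_f|$---is exactly the content of those references and is correct.

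Your plan for the second assertion contains a genuine setting confusion. The centralizer is to be computed in $Aut(V_{EE_8})$, where $EE_8$ is a single rank-$8$ lattice; there is no ``triple of $EE_8$-sublattices $M,M',M''$ of $E_8^3$'' in this problem. That picture belongs to the rank-$24$ analysis in Sections~\ref{rank8to24} and~6, not here. In the rank-$8$ setting both $e=e_M$ and $f=r_M(e_M)$ live in $V_M$ with $M\cong EE_8$, and one checks directly that $\tau_e$ is (a $Hom(M,\ZZ_2)$-translate of) $\theta$, while $\tau_f=r_M\tau_e r_M^{-1}$, so $\tau_e\tau_f=r_M^{-2}=r_M$ and $\langle\tau_e,\tau_f\rangle=\langle\theta,r_M\rangle\cong Sym_3$. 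Since $EE_8$ is rootless, $Aut(V_{EE_8})$ is the torus normalizer $Hom(EE_8,\CC^\times).O(EE_8)$; centralizing $\theta$ cuts the torus to its $2$-torsion $Hom(EE_8,\ZZ_2)\cong 2^8$ and leaves $O(EE_8)\cong W(E_8)$; centralizing $r_M=\exp(2\pi i\gamma_0)$ then cuts $W(E_8)$ down to the subgroup fixing the character $E_8\to E_8/K\cong\ZZ/3$, namely $W(A_8)\cong Sym_9$ (note $-1\in Z(W(E_8))$ inverts the character and is excluded). This yields $2^8{\cdot}Sym_9$. Your invocation of $D_4$-triality as a potential obstruction is also misplaced: triality enters only in the rank-$24$ gluing analysis of Section~6, not in this rank-$8$ centralizer.
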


\medskip

\noindent \textbf{Step II.} We explain that $r(x,y)$ is conjugate in
$Aut(V_{E_8})$ to an automorphism $\hat{h}(x,y)$ in a torus normalizer in
$Aut(V_{E_8})\cong E_8(\CC)$
such that $\hat{h}(x,y)$ induces a fixed point free
isometry $h$ on $E_8$ by the natural action of the torus normalizer on the root
lattice. We then derive a pair of
$EE_8$-sublattices $M$ and $M'$ in $E_8^3$ as follows.

Set $\rho:=r(x,y)\otimes 1\otimes 1$ and $\eta:=\eta(x,y):=\hat{h}(x,y)\otimes
1\otimes 1$.
We
identify $V_{E_8^3}$ with $V_{E_8}^{\otimes 3}$,
so that $\rho$ and $\eta$ may be considered
automorphisms of $V_{E_8^3}$.
We also take the two $EE_8$-sublattices of $E_8^3$:
\[
 M=\{(a,a,0)| a\in E_8\}\quad \text{ and }\quad M'=\{(ha, a,0)| a\in E_8\}.
\]

The following are the main results of this step.

\begin{thm}
 $\rho$ is conjugate in $Aut(V_{E_8^3})$ to $\eta$ and $\eta$ is in a torus
normalizer.
\end{thm}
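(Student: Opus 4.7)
The plan is to reduce both claims to statements about the single first tensor factor, where $Aut(V_{E_8})\cong E_8(\CC)$ is a familiar connected complex Lie group. Since $\rho = r(x,y)\otimes 1\otimes 1$ and $\eta = \hat h(x,y)\otimes 1\otimes 1$ act trivially on the last two tensor factors, any $g\in Aut(V_{E_8})$ with $g\, r(x,y)\, g^{-1} = \hat h(x,y)$ yields the conjugator $g\otimes 1\otimes 1 \in Aut(V_{E_8})^{\otimes 3}\subseteq Aut(V_{E_8^3})$ taking $\rho$ to $\eta$. Likewise, if $T_1\subset Aut(V_{E_8})$ is a maximal torus whose normalizer contains $\hat h(x,y)$, then $T_1\otimes T_1\otimes T_1$ is a maximal torus of $Aut(V_{E_8})^{\otimes 3}$ (inside a maximal torus of $Aut(V_{E_8^3})$, whose identity component is $E_8(\CC)^3$), and its normalizer contains $\eta$. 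So the real content is the conjugacy of $r(x,y)$ and $\hat h(x,y)$ in $E_8(\CC)$.

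The element $r(x,y)$ is by construction a torus element of order $n = |E_8:K|=3$, and by the defining formula $r(x,y)(u\otimes e^\a)= \xi^{\lambda(\a+K)}u\otimes e^\a$ its fixed subalgebra on the weight-one space $(V_{E_8})_1$ is precisely the Levi subalgebra of type $A_8$ attached to $K$. Thus its eigenvalues on $(V_{E_8})_1$ are $1,\o,\o^2$ with multiplicities $(80,84,84)$. On the other side, I would take the image $h\in W(E_8)$ of $\hat h(x,y)$ to be a fixed-point-free element of order $3$ (a single conjugacy class in $W(E_8)$, of Carter type $4A_2$), and then specify a torus-twisted lift $\hat h(x,y)\in N_{E_8(\CC)}(T_1)$ so that on the Cartan $\mathfrak h$ the eigenvalues $(1,\o,\o^2)$ have multiplicities $(0,4,4)$, and on the $240$-dimensional sum of root spaces they have multiplicities $(80,80,80)$. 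Combined, these produce the matching characteristic polynomial $(t-1)^{80}(t-\o)^{84}(t-\o^2)^{84}$ on $(V_{E_8})_1$.

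Granting the match, $r(x,y)$ and $\hat h(x,y)$ are semisimple elements of finite order in $E_8(\CC)$ whose characteristic polynomials on the faithful adjoint representation coincide. Since semisimple conjugacy in a connected reductive complex group is detected by $W$-orbits on a fixed maximal torus, and since for order $3$ elements of $E_8(\CC)$ such an orbit is pinned down by the centralizer type (equivalently, by the Kac coordinates on the extended diagram), the two elements are conjugate in $E_8(\CC)$, yielding the first assertion of the theorem. The second assertion follows from the construction of $\hat h(x,y)$ as an element of $N_{E_8(\CC)}(T_1)$ together with the tensor-factor argument of the first paragraph.

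The main obstacle is the bookkeeping in the second paragraph: one must verify that a concrete torus twist of the $4A_2$-Weyl element produces fixed root-space multiplicity exactly $80$, which requires tracking the $h$-orbits on the $240$ roots and the action of the chosen twist on each orbit. A cleaner route that avoids this computation is to invoke the Kac classification directly: order $3$ inner automorphisms of $(V_{E_8})_1$ with centralizer of type $A_8$ form a single $E_8(\CC)$-conjugacy class, characterized by the Kac label $1$ at the extended-diagram node of mark $3$ forming the $3C$-branch and $0$ elsewhere. Recognizing both $r(x,y)$ (directly from its definition) and the chosen lift $\hat h(x,y)$ (from its centralizer type) as having this Kac invariant then collapses the eigenvalue accounting into a single well-known invariant comparison.
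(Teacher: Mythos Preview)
Your argument is correct, but it proceeds by a genuinely different route from the paper. The paper never appeals to Kac coordinates or to characteristic polynomials on the adjoint; instead it works entirely inside the sublattice $A_2^{\perp 4}\subset E_8$ and the corresponding subgroup $SL_3(\CC)^4\subset E_8(\CC)$. Concretely, in Section~\ref{sl} the permutation matrix $P$ and the discrete-Fourier matrix $B$ are written down, and the single matrix identity $B^{-1}PB=\mathrm{diag}(\omega,\omega^2,1)$ shows $r_{A_2}=s_{A_2}\,\tilde h_{A_2}\,s_{A_2}^{-1}$ in $Aut(sl_3)$. Tensoring four times gives $r=s\,\tilde h\,s^{-1}$ in $Aut(V_{E_8})$ (equation~\eqref{rands}), and one more tensoring with identities gives $\rho=\sigma\,\eta\,\sigma^{-1}$ with $\sigma=s\otimes s\otimes s$. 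That $\eta$ lies in a torus normalizer is then immediate, since $\tilde h_{A_2}$ is conjugation by the permutation matrix $P$, which normalizes the diagonal torus of $SL_3$. Your approach via the classification of order-$3$ semisimple classes is cleaner conceptually and would port to other nodes without rewriting matrices, but it does not hand you the explicit conjugator $\sigma$, and the paper uses $\sigma$ essentially in what follows (Lemmas on $\sigma\theta\sigma^{-1}$ and on $\sigma^{-1}e$, $\sigma^{-1}f$). One small wording issue: you speak of ``specifying a torus-twisted lift'' with the desired spectrum, as if the lift were yours to choose, whereas $\eta$ is already fixed by Notation~\ref{rhoandeta}; you should instead verify directly that the given $\tilde h=\tilde h_{A_2}^{\otimes 4}$ has order $3$ (immediate from $P^3=I$), after which your eigenvalue count $(80,84,84)$ is forced, since a fixed-point-free order-$3$ Weyl element has no fixed roots and hence $80$ three-cycles on the $240$ root spaces.
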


\begin{thm}
Let $e_M$ and $e_{M'}$ be \cvcch supported at $M$ and $M'$,
respectively (cf. Notation \ref{def:phisubx}). Then, the subVOA $\la
e_M, e_{M'}\ra $ generated by $e_M$ and $e_{M'}$  is isomorphic to
$\la e, f\ra$.
\end{thm}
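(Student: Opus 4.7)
The plan is to reduce the desired isomorphism to the conjugacy $\rho\sim\eta$ supplied by the previous theorem, and then to the uniqueness of dihedral subVOAs generated by a pair of \cvcch's (the Sakuma / Lam--Yamada--Yamauchi classification).

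First I would identify $e$ with $e_M$ under the embedding $V_{E_8\oplus E_8}\hookrightarrow V_{E_8^3}$ that fills the third tensor slot with the vacuum: since $M=\{(a,a,0):a\in E_8\}$ is precisely the diagonal $EE_8$ used in Step I, the two \cvcch's agree, and therefore
\[
\la e,f\ra = \la e_M,\rho(e_M)\ra.
\]
Next I would show $\eta(e_M)=e_{M'}$. Because $\eta=\hat h(x,y)\otimes 1\otimes 1$ acts as the torus-normalizer lift $\hat h$ on the first factor, it carries the support lattice $M$ of $e_M$ to $\{(ha,a,0):a\in E_8\}=M'$; and since a \cvcch of $EE_8$-type is determined by its support up to the sign convention fixed in Notation \ref{def:phisubx}, this gives $\eta(e_M)=e_{M'}$. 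Consequently
\[
\la e_M,e_{M'}\ra=\la e_M,\eta(e_M)\ra.
\]

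Now applying the previous theorem, there exists $\phi\in Aut(V_{E_8^3})$ with $\phi\rho\phi^{-1}=\eta$. Then $\phi$ carries $\la e_M,\rho(e_M)\ra$ isomorphically onto
\[
\la \phi(e_M),\ \eta(\phi(e_M))\ra,
\]
so the problem is reduced to comparing this pair with $(e_M,\eta(e_M))$ inside $V_{E_8^3}$.

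The hard part will be exactly this last comparison: the conjugating element $\phi$ need not send $e_M$ to $e_M$, nor even to a \cvcch supported at some $EE_8$-sublattice, so the two subVOAs are not visibly equal inside $V_{E_8^3}$. I expect the cleanest route is to invoke the classification of dihedral subalgebras generated by two \cvcch's (Sakuma, and \cite{LYY2}): the isomorphism type depends only on the ``dihedral type,'' equivalently on the conjugacy class of $\tau_{e_1}\tau_{e_2}$ or on the inner product $(e_1,e_2)$. Proposition \ref{prop:1.1} already pins the type of $\la e,f\ra$ down to the $3C$-case. It then suffices to verify the same type for $\la e_M,e_{M'}\ra$, which I would do by computing $(e_M,e_{M'})$ explicitly from the formula for inner products of \cvcch's at $EE_8$-sublattices of $V_{E_8^3}$, using that the intersection $M\cap M'$ is controlled by the fixed-point-free isometry $h$ on $E_8$ (no eigenvalue $1$); this intersection pattern should reproduce the inner product $(e,f)$ forced by the $3C$-node, completing the identification. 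An alternative, if available, is to modify $\phi$ by an element of the centralizer of $\eta$ to arrange $\phi(e_M)=e_M$ outright, using the large symmetry of the torus-normalizer configuration; but I anticipate the Sakuma/LYY route to be more efficient.
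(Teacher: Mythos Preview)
Your route is valid in spirit but differs from the paper's, and your ``alternative'' is actually much closer to what the paper does than your primary Sakuma route. The paper does not invoke the dihedral classification at this step. Instead it uses the explicit conjugating element: from $r=s\tilde h s^{-1}$ one gets $\rho=\sigma\eta\sigma^{-1}$ with $\sigma:=s\otimes s\otimes s$ (diagonal, not $s\otimes 1\otimes 1$). The technical heart is a computation showing that $\theta s\theta s^{-1}$ is conjugation by a permutation matrix, from which one deduces that $\theta\sigma\theta\sigma^{-1}$ fixes $e$; hence $\sigma^{-1}e$ and $\sigma^{-1}f=\eta\,\sigma^{-1}e$ are $\theta$-fixed. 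The paper then argues these lie in $V_M^+$ and $V_{M'}^+$ respectively, and the classification of \cvcch in $V_{EE_8}^+$ identifies them as $\varphi_x e_M$, $\varphi_y e_{M'}$. Thus the isomorphism is witnessed by the explicit automorphism $\sigma^{-1}$, with no appeal to inner-product matching or Sakuma. What your plan buys is robustness (it would work for any conjugating element), at the cost of importing a heavy classification theorem whose hypotheses you would still need to check in $V_{E_8^3}$ (which has nonzero weight-one space); the paper's approach buys a self-contained argument at the cost of the $\theta$-calculation.

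One imprecision to fix: your claim that $\eta(e_M)=e_{M'}$ on the nose is not justified. The lift $\tilde h$ carries signs, so $\eta(e_M)$ is a priori only some $\varphi_y e_{M'}$ supported at $M'$, not necessarily the principal one; the paper is careful about exactly this and only concludes $e'=\varphi_x e_M$, $f'=\varphi_y e_{M'}$ for suitable $x,y$.
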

Therefore, we may transfer the study of the dihedral group $\la x,y\ra<
\mathbb{M}$ to the study of  \cvcch $e_M$ and $e_{M'}$ in
$V_\L^+ \subset V^\natural$.

We trade $\rho$ for $\eta$ since $\eta$ looks like a ``permutation of roots''
and gives a
map on a lattice, so can be interpreted as a map on the VOA
$V_\Lambda^+$ associated with the Leech lattice $\L$, whereas $\rho$ is
``exponential'', so cannot have a direct
interpretation as an exponential on $V_\L^+$ (since this
VOA has a finite automorphism group).

\medskip

\noindent \textbf{Step III.} In this step, we  shall take the pair $x, y$
to a pair of Miyamoto
involutions associated to conformal vectors $e_M, e_{M'}$ of central charge
$\half$ which lie in $V_\L^+$.

We first determine the isometry type of $Q:=M+M'$ and show that $Q$ can be
embedded into the Leech lattice $\Lambda$.
The main theorem is as follows.

\begin{thm}
The Leech lattice $\Lambda$ contains a sublattice isometric to $Q\cong
A_2\otimes E_8$ and hence  $U=\la e , f \ra$, the subVOA generated by $e$
and $f$, can be embedded into $V_\Lambda^+$.
Moreover, the annihilator
$R:=ann_\L(Q)$ of $Q$ in $\Lambda$ is isometric to $\sqrt{3}E_8$.
\end{thm}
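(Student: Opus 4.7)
The plan is to split the theorem into three parts: (i) identify the isometry type of $Q=M+M'$ as $A_2\otimes E_8$; (ii) embed $Q$ isometrically into $\Lambda$ with orthogonal complement isometric to $\sqrt{3}E_8$; and (iii) lift the lattice embedding to the desired subVOA embedding $U\hookrightarrow V_\Lambda^+$.

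For (i), I would compute the Gram matrix of $Q$ in the basis $\{(u_i,u_i,0),\,(hu_i,u_i,0)\}_{1\le i\le 8}$, where $\{u_i\}$ is a $\ZZ$-basis of $E_8$. Since $h$ is a fixed-point-free isometry of $E_8$ satisfying $h^2+h+1=0$ (Step II), one has $(u,hv)+(hu,v)=-(u,v)$, which gives the block form $G_Q=\begin{pmatrix}2G & G+H\\-H & 2G\end{pmatrix}$ with $G_{ij}=(u_i,u_j)$ and $H_{ij}=(u_i,hu_j)$; using $H=G[h]$ and $[h]+[h]^2=-I$, a short block-determinant calculation gives $\det(G_Q)=2^8\det(\tfrac{3}{2}G)=3^8$. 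The map $g:=h\oplus 1\oplus 1$ preserves $Q$ (one checks $gM=M'$ and $gM'\subset M+M'$) and makes $Q$ into a free $\ZZ[\omega]$-module of rank $8$; matching this Hermitian $\ZZ[\omega]$-structure with that of $A_2\otimes E_8$ (where $A_2=\ZZ[\omega]$) yields the required $\ZZ$-lattice isometry.

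For (ii), I would begin from the Niemeier lattice $N=E_8\oplus E_8\oplus E_8$ with its cyclic-shift automorphism $\sigma$ of order $3$. The fixed sublattice $N^\sigma=\{(x,x,x):x\in E_8\}$ has Gram form $3G_{E_8}$, hence $N^\sigma\cong\sqrt{3}E_8$, while the coinvariant sublattice $(1-\sigma)N=\{(a,b,c)\in N:a+b+c=0\}$ coincides with $A_2\otimes E_8$ under the standard realization of $A_2$ as the sum-zero sublattice of $\ZZ^3$. Thus $\sqrt{3}E_8\perp(A_2\otimes E_8)$ sits inside $N$ with index $3^8$, and its discriminant group $(\ZZ/3)^{16}$ admits several maximal isotropic subgroups: one reconstructs $N=E_8^{\oplus 3}$, while a different one produces an even unimodular overlattice without roots, which by Niemeier's classification is forced to be $\Lambda$. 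I would construct this alternative glue explicitly via triality for $D_4$ (as signposted in the introduction), obtaining the embedding $\sqrt{3}E_8\perp(A_2\otimes E_8)\hookrightarrow\Lambda$ and simultaneously identifying $R=\mathrm{ann}_\Lambda(Q)\cong\sqrt{3}E_8$.

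For (iii), the isometric inclusion $Q\hookrightarrow\Lambda$ induces a subVOA inclusion $V_Q\hookrightarrow V_\Lambda$; since $M,M'\subset Q$, the conformal vectors $e_M,e_{M'}$ lie in $V_Q$, and since their construction is symmetric in the formal exponentials $e^{\pm\alpha}$ (hence fixed by the canonical $-1$-involution $\theta$ on $V_\Lambda$), they in fact lie in $V_\Lambda^+$. Thus $U=\la e,f\ra=\la e_M,e_{M'}\ra\hookrightarrow V_\Lambda^+$, completing the proof. The main obstacle is the re-gluing step in (ii): one must exhibit a maximal isotropic subgroup of the discriminant form on $\sqrt{3}E_8\perp(A_2\otimes E_8)$ whose associated even unimodular overlattice contains no roots, so that it is forced by Niemeier to be $\Lambda$. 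Triality of $D_4$ (applied to a $D_4$-sublattice of $E_8$) furnishes this alternative gluing and yields $\Lambda$ directly; carrying out this construction explicitly is the technical heart of the theorem.
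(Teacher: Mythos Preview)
Your approaches to (i) and (ii) are valid but differ from the paper's. For (i), the paper does not compute a Gram matrix or invoke the $\ZZ[\omega]$-structure; it instead observes that $M+M'$ is rootless (a two-line check) and then quotes the classification of rootless $EE_8$-pairs from \cite{glee8} to identify $M+M'$ as $DIH_6(16)\cong A_2\otimes E_8$. For (ii), the paper at this point simply cites an explicit embedding of $A_2\otimes E_8$ into $\Lambda$ given in the appendix of \cite{glee8}; the re-gluing via triality that you outline is precisely the content of the paper's later Section~6 (Lemma~\ref{lbetarootless}), so you are importing that argument forward. Both routes are fine, and yours has the virtue of being self-contained, but you should be aware that the paper treats the triality construction as a separate theorem with its own purpose (explaining the loss of half the Weyl group), not as the proof of the present embedding statement.

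Part (iii), however, has a genuine gap. You assert $U=\langle e,f\rangle=\langle e_M,e_{M'}\rangle$, but this is not correct as stated and the isomorphism is not obvious. By definition $f=\rho\, e_M$ where $\rho=r\otimes 1\otimes 1$ is the exponential automorphism; since $\rho$ preserves $V_M$ (Lemma~\ref{rhoandeta} ff.), $f$ lies in $V_M$, not in $V_{M'}$. Moreover $f$ is \emph{not} fixed by $\theta$: one has $\theta\rho\theta^{-1}=\rho^{-1}$ (because $\theta$ negates the Cartan and $\rho$ is an exponential of a degree-zero Cartan operator), so $\theta f=\rho^{-1}e_M\neq f$. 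Hence $U$ itself does not sit inside $V_Q^+$, and knowing that $e_M,e_{M'}\in V_\Lambda^+$ is not enough. The paper handles this with a nontrivial conjugation: it constructs $\sigma=s\otimes s\otimes s$ (built from the change-of-Cartan matrix $B$ in $SL_3$) and proves through Lemmas~2.10--2.15 that $\sigma^{-1}e\in V_M^+$ and $\sigma^{-1}f=\eta\sigma^{-1}e\in V_{M'}^+$, whence $U\cong\sigma^{-1}U\subset V_Q^+\subset V_\Lambda^+$. This conjugation---trading the exponential $\rho$ for the lattice automorphism $\eta$ and simultaneously landing in the $\theta$-fixed subspace---is the real content of the ``hence'' in the theorem, and your outline skips it entirely.
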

As a consequence, the subVOA generated by $e_M$ and $e_{M'}$ can be
embedded into $V_\L^+$.
Recall that the moonshine VOA $V^\natural$ is
constructed by \cite{FLM} as a $\ZZ_2$-orbifold of the Leech lattice
VOA $V_\Lambda$, that means,
\begin{equation}\label{Vn}
V^\natural=
(V_\Lambda)^+ \oplus (V_\Lambda^T)^+,
\end{equation}
where $V_\Lambda^T$ is the unique irreducible $\theta$-twisted
module for $V_\Lambda$ and $(V_\Lambda^T)^+$ is the fixed point
subspace of $\theta$ in $V_\Lambda^T$. Thus $U=\la e , f \ra$ can
also be embedded into the Moonshine VOA $V^\natural$. We shall note
that $\eta$ leaves the subVOA $V_Q$ invariant. Thus, it induces an
automorphism $\eta_Q$ on $V_Q$ by restriction.
We also show that 
$\eta_Q$ can be extended to an automorphism $\eta_{\Lambda}$ in
$Aut(V_\Lambda)$. Thus, $\eta$ has a life on $V_{\L}$ and
$V_\Lambda^+$. Since $V_\Lambda^+\cong (V^\natural)^z$ for a $2B$
involution  $z \in \MM$ and $Aut(V_\Lambda^+)\cong C_\MM(z)/\la z\ra
\cong 2^{24}\cdot Co_1$ , we can study the centralizer of $\la x, y,
z\ra$ in $\mathbb{M}$ by using the configuration of $M$, $M'$ and
their sum $Q$ in $\L$. This leads us to study the overlattices of
$Q\perp R$ and the corresponding gluing maps.
It turns out that the
stabilizer of a gluing map is exactly the normalizer of $\eta$ in
the isometry group of the overlattice \refpp{stab1}.

\medskip

\noindent \textbf{Step IV.}
Our analysis at the stage where we enlarge $Q\perp R$ to $\L$ leads to an
analysis of gluing maps.  There exists one whose stabilizer is a subgroup
$Sym_3 \times 2{\cdot}Alt_9$. Our
proof makes use of triality for groups of type $D_4$.   Since the half-spin
representations play a role, it is clear that we lose the `outer' part of our
subgroup of type $Sym_9$.

In this step, we first start with a gluing map $\a: \dg{Q} \to
\dg{R}$ such that the associated overlattice $L_{\a}$ is isometric
to $E_8^3$. We also construct a subgroup $K_0$ of $Spin^+(8,3)$ so
that $K_0$ is a covering group of $Sym_9$, $K:=K_0'\cong 2{\cdot}
Alt_9$ and $K_0/K \cong 2$.  The main idea is to choose such a $K_0$ so that the action of $K$ comes from a subgroup of $O(Q)\times O(R)$, but not so for $K_0$.
We then
twist $\a$ by an
element $u\in K_0\setminus K$ to get a new gluing map
$\beta=:u\a u^{-1}$. The result is:
\begin{thm}
The associated overlattice $L_\b$ is even unimodular and rootless, so is isometric to
the Leech lattice.  Its stabilizer is a subgroup $Sym_3\times
2{\cdot} Alt_9$ of the group $O(Q\perp R)\cong Sym_3\times
O(E_8)\times O(E_8)$.
\end{thm}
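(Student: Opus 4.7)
The plan is to verify in sequence: (a) $L_\b$ is even unimodular, (b) $L_\b$ has no roots, (c) hence $L_\b\cong \L$, and (d) the stabilizer formula. Since $\b=u\a u^{-1}$ where $u$ acts on $\dg{Q}\oplus\dg{R}$ via the half-spin representation of Step~IV, the maximal isotropic subgroups $I_\a,I_\b\le \dg{Q\perp R}$ corresponding to $L_\a$ and $L_\b$ satisfy $I_\b=\bar u\,I_\a$. Because $\bar u$ preserves the discriminant form, $I_\b$ is again maximal isotropic, so $L_\b$ is an even unimodular lattice of rank $24$, disposing of (a).

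The main work is (b). Note that $Q\cong A_2\otimes E_8$ has minimum norm $4$ and $R\cong\sqrt{3}E_8$ has minimum norm $6$, so every norm-$2$ vector of $L_\b$ must lie in a nontrivial coset of $Q\perp R$; in particular, all roots of $L_\a\cong E_8^3$ arise this way. To show $L_\b$ is rootless, I would enumerate the nontrivial cosets of $\dg{Q}\oplus \dg{R}$ in $I_\b$ and compute the minimum norm attained in each. The leverage is the choice of $u\in K_0\setminus K$: by the Step~IV construction of $K_0$ inside $Spin^+(8,3)$, the half-spin action on the $3$-torsion of $\dg{Q}$ (visible as $D_4$-triality data) sends the cosets that previously hosted roots of $L_\a$ to cosets whose minimum-norm representatives strictly exceed $2$. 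This coset-by-coset minimum-norm check is the main obstacle; all the other steps are essentially formal, but here one must directly exploit the outer-triality nature of $u$, since a generic conjugation of $\a$ would preserve roots. Once (b) is established, (c) follows from the classical uniqueness of the Leech lattice among even unimodular rootless lattices of rank $24$.

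For (d), the stabilizer of $L_\b$ in $O(Q\perp R)$ equals the subgroup normalizing $I_\b$. Elements of $K\cong 2{\cdot}Alt_9$ lie in $O(Q)\times O(R)\le O(Q\perp R)$ by the construction, and since $K$ is the commutator subgroup (hence characteristic) of $K_0\cong 2{\cdot}Sym_9$, for any $k\in K$ we have $u^{-1}ku\in K$, whence $k(I_\b)=\bar u(u^{-1}ku)(I_\a)=\bar u\, I_\a=I_\b$. Hence $K$ lies in the stabilizer. Conversely, elements of $K_0\setminus K$ are \emph{not} elements of $O(Q\perp R)$ at all: they exist only through the half-spin action on the discriminant and do not lift to isometries of $Q\perp R$, so they cannot enlarge the stabilizer. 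The $Sym_3$-factor comes from the natural $W(A_2)\cong Sym_3$ action on the $A_2$-tensor component of $Q$ together with its compatible action on the $3$-torsion of $R$; it commutes with $K$ and is manifestly preserved under the conjugation from $\a$ to $\b$. A finite check that no further element of $O(Q\perp R)$ normalizes $I_\b$ then yields the full stabilizer $Sym_3\times 2{\cdot}Alt_9$, as claimed.
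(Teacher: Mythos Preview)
Your outline for (a) and (c) is fine, but the two substantive parts, (b) rootlessness and (d) the stabilizer, are both left as unexecuted computations, and the paper's actual proofs avoid these computations entirely by structural arguments you are missing.

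For (b), you propose to enumerate the $3^8$ nontrivial cosets in $I_\b$ and check minimum norms one by one, asserting without proof that triality moves the root-bearing cosets to non-root-bearing ones. The paper instead argues by contradiction: if $L_\b$ contained a root $r$, then since $K\cong 2{\cdot}Alt_9$ stabilizes $L_\b$ and acts transitively on the roots of $E_8$ (this transitivity is proved separately), the $K$-orbit of $r$ would already be a full $E_8^3$ root system of size $720$. Hence $L_\b$ would be one of the $M(\phi)$ overlattices isometric to $E_8^3$. But the paper shows that any element of $O(\dg Q)\times O(\dg R)$ carrying one $M(\phi)$ to another must lie in $\pi(O(Q)\times O(R))$; this would force $u\in \pi(O(Q)\times O(R))$, contradicting the very construction of $u\in K_0\setminus K$ (which hinges on $Z(W)\times W_0'$ being maximal in $W'$). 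No coset enumeration is needed.

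For (d), your ``finite check that no further element of $O(Q\perp R)$ normalizes $I_\b$'' is again not carried out. The paper's argument is that the stabilizer contains $Sym_3\times K$ but cannot be all of $O(L_\a)$ (since $L_\a\neq L_\b$), and then invokes the maximality of $Z(W)\times W_0'\cong 2\times Alt_9$ in $W'$ to conclude that the stabilizer is exactly $Sym_3\times 2{\cdot}Alt_9$. You should prove and use this maximality lemma rather than appeal to an unspecified finite check.
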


Thus we can, in a sense, witness loss of half the Weyl group of type
$A_8$ for the node $n(x,y)$.  This is an explanation for one of the Glauberman-Norton observations.
\medskip

Note that $u$ gives a map from $\dg{Q\perp R}$ to itself. Hence, it
induces a permutation on the set of all irreducible modules for
$V_{Q\perp R}$ since the irreducible modules for $V_{Q\perp R}$ are
parametrized by $\dg{Q\perp R}$ \cite{Dong}. Therefore, the
construction of $L_\b(\cong \L)$ from $L_\a$ can also be interpreted
as an orbifold construction of $V_{L_\b}$ from $V_{L_\a}$ using a
subgroup $A\cong 3^8$ of $Aut(V_{L_\a})$ such that the fixed point
subVOA $(V_{L_\a})^A$ is isomorphic to $V_{Q\perp R}$ (cf.
\cite{dlm96,dlm,Li}).

This ends the preview of our $3C$-path construction.  It begins with
one set of data (the extended $E_8$ diagram) and ends with $V_{\L}$.
In the latter VOA, we find concrete realizations of the second set
of data (dihedral groups generated by pairs of $2A$-involutions),
namely pairs of conformal vectors of central charge $\half$ which
represent all 9 types of these dihedral groups. The monster group
does not act as automorphisms of this VOA, but rather does so on an
orbifold of it, called $\vnat$.  Both $V_{\L}$ and $\vnat$ contain a
subVOA $V_{\L}^+$, where suitable pairs of conformal vectors may be
found (so we felt no need to add details about $\vnat$ in this
article).
In \cite{LSY}, all \cvcch in the VOA $V_{\L}^+$ were
classified. There are two types of \cvcch. The first type ($AA_1$-type) is
associated to  a norm $4$ vector $\a$  in $\L$ and denoted by $\omega^\pm(\a)$
(cf. Notation \ref{cvcchAA1EE8}).
The corresponding Miyamoto involution is defined by
\[
\tau_{\omega^\pm(\a)}(u\otimes e^\b) =(-1)^{\la \a, \b\ra} u\otimes e^\b \quad \text{ for }
u\in M(1), \b\in \L.
\]
The second type ($EE_8$-type) is associated to an $EE_8$-sublattice $M$ of $\L$.
The corresponding Miyamoto involution induces an isometry of $\L$,
which acts as $-1$ on $M$ and $1$ on $ann_\L(M)$ (see Notation
\ref{def:phisubx} and Appendix \ref{Appendix:B}). Our recent
classification \cite{glee8era,glee8} of configurations of
$EE_8$-lattices is used to analyze relevant pairs of conformal
vectors.

Building materials for our path come from several highly developed
mathematical theories  (Lie theory, lattices, vertex operator
algebras, finite groups).  More aspects  of these theories could
play roles in the future.  We hope for a wide moonshine road, making
the study of moonshine more concrete and enabling the transporting
of ideas.  In particular, this ought to illuminate connections
between the extended  $E_8$-diagram and the monster.

\medskip

The first author thanks National Cheng Kung University for financial
support during a visit to Tainan, Taiwan, and the U. S. National
Science Foundation for support from grant NSF (DMS-0600854).  The
second author thanks National Science  Council of Taiwan for support
from grant NSC 97-2115-M006-015-MY3.

\subsection{Notation and Terminology}

In this article,  all group actions are assumed to be on the left.
Our notation for the lattice vertex operator algebra
\begin{equation}\label{VL}
V_L = M(1) \otimes \CC[L]
\end{equation}
associated with a positive definite even lattice $L$ is also
standard \cite{FLM}. In particular, ${\mathfrak h}=\CC\otimes_{\ZZ}
L$  is an abelian Lie algebra and we extend the bilinear form to
${\mathfrak h}$ by $\CC$-linearity. Also, $\hat {\mathfrak
h}={\mathfrak h}\otimes \CC[t,t^{-1}]\oplus \CC k$ is the
corresponding affine algebra and $\CC k$ is the 1-dimensional center
of $\hat{\mathfrak{h}}$. The subspace $M(1)=\CC[\a(n)|\a\in
{\mathfrak h}, n<0],$ where $\a(n)=\a\otimes t^n,$ is the unique
irreducible  $\hat{\mathfrak h}$-module such that $\alpha(n)\cdot
1=0$ for all $\alpha\in {\mathfrak h}$ and $n$ positive, and $k=1.$
Also,  $\CC[L]=\{e^{\beta}\mid \beta\in L\}$ is the twisted group
algebra of the additive group $L$ such that $e^\b e^\a=(-1)^{\la \a,
\b\ra} e^\a e^\b$ for any $\a, \b\in L$. The vacuum vector
$\mathbf{1}$  of $V_L$ is $1\otimes e^0$ and the Virasoro element
$\omega$ is $\frac{1}{2}\sum_{i=1}^d\beta_i(-1)^2\cdot \mathbf{1}$
where $\{\beta_1,..., \beta_d\}$ is an orthonormal basis of
${\mathfrak h}.$ For the explicit definition of the corresponding
vertex operators, we shall refer to \cite{FLM} for details.

\begin{center}
{\bf Notation and Terminology}

\medskip
\begin{tabular}{|c|c|c|}
   \hline
\bf{Notation}& \bf{Explanation} & \bf{Examples in text}  \cr
   \hline\hline
$2A, 2B, 3A, \dots$ & conjugacy classes of the Monster, &  Equation \eqref{eq:1.1}\cr
                  & the first number denotes the order & \cr
                  & of the elements and the second letter & \cr
                  & is arranged in descending order of & \cr
                  & the size of the centralizers &\cr \hline
$A_1,  \cdots , E_8$ &root lattice for root system &  Sec. 2\cr
& $\Phi_{A_1}, \dots ,
\Phi_{E_8}$& \cr
   \hline
$AA_1, \cdots ,$ & lattice isometric to $\sqrt 2$ times &\cr
$ EE_8$ &the lattice  $A_1, \cdots,  E_8$ & Sec. 2\cr
\hline
$AAA_1, \cdots ,$ & lattice isometric to $\sqrt 3$ times & \cr
$ EEE_8$ & the lattice  $A_1, \cdots,  E_8$ & Remark \ref{qinleech}\cr
\hline
\end{tabular}
\end{center}
\newpage

\begin{center}
 \begin{tabular}{|c|c|c|}
  \hline
\bf{Notation}& \bf{Explanation} & \bf{Examples in text}  \cr
   \hline\hline
$E_{i,j}$ & a square matrix whose $(i,j)$-th entry  & Sec. \ref{sl}, \cr
 & is $1$ and all other entries are $0$& Equation \eqref{eta} \cr \hline
$e_M$& a principal conformal vector of $V_M$, i.e,  & Notation \ref{def:phisubx}\cr
&  $ e_M=\frac{1}{16}\omega_M + \frac{1}{32}
\sum_{\a\in M(4)} e^\a $, &\cr
& where $M\cong EE_8$& \cr
\hline
    \vspace{-0.35cm}
\ &&  \cr
$\eta$ or $\eta_{E_8^3}$ & the automorphism $\tilde{h}\otimes 1\otimes 1$ of
& Notation \ref{rhoandeta}\cr
&$V_{E_8^3}\cong
V_{E_8}\otimes V_{E_8}\otimes V_{E_8}$ & \cr
\hline
$\eta_Q$ & the restriction of $\eta$ to $V_Q\subset V_{E_8^3}$& Step III of \cr
&& Introduction\cr \hline
$h$ & a fixed point free automorphism&\cr
& of $E_8$ of order $3$ & Notation \ref{hinE8}\cr \hline
\vspace{-0.4cm}
&&\cr
$\tilde{h}$ & a lift of $h$ in $V_{E_8}$, i.e, $\tilde{c}( M(1) )\subset
M(1)$,& Equation \eqref{tildeh}\cr
&  $\tilde{h}(e^x)=\epsilon_x e^{h(x)}$,
$\epsilon_x=\pm 1$ &\cr
\hline
 $h_{A_n}$ & a Coxeter element in $Weyl(A_n)$ & Sec. \ref{sl}, \cr
 && Equation \eqref{hAn} \cr\hline
\vspace{-0.4cm}
&&\cr
$\tilde{h}_{A_n}$ & a lift of $h_{A_n}$ in $Aut(sl_{n+1}(\CC))$ (See & Equation \eqref{eta}\cr
& Equation \eqref{eta} for the precise definition) &\cr \hline
$K$ or $K_{nX}$& the lattice associated with &Equation \eqref{K}\cr
&the Dynkin subdiagram of $\hat{E_8}$&\cr
& with the $nX$-node removed&\cr
\hline
$L^+(\th), L_+(\th )$&the fixed point sublattice & Proposition \ref{stab0}\cr
&of theta, its annihilator, resp. & \cr
\hline
$L(k)$ & the set of all norm $k$ vectors in $L$,   &  Notation \ref{def:phisubx}\cr
&i.e., $ L(k)=\{ a\in L\mid \la a, a\ra =k\}$ &\cr \hline

$\MM$ & the Monster simple group & Compact Summary, \cr
&& Appendix \ref{App:D}     \cr      \hline
$M(\phi )$ & overlattice defined by gluing map $\phi$ & Notation \ref{mphi}
\cr \hline
  Niemeier  & a rank 24 even unimodular lattice
  &Introduction, \cr
  lattice&&Appendix \ref{QRinN}\cr \hline
 $N(X)$ &
Niemeier lattice whose root system& Appendix \ref{QRinN}\cr &has
type $X$& \cr \hline
  $O(X)$ & the isometry group of  & Remark
\ref{qinleech}, \cr &the quadratic space $X$& Lemma \ref{stab0}\cr
\hline
 $O(X, Y, \dots )$&$O(X)\cap O(Y)\dots $ & 
 \cr \hline
\end{tabular}
\end{center}
\newpage

\vspace{-1cm}
\begin{center}
 \begin{tabular}{|c|c|c|}
  \hline
\bf{Notation}& \bf{Explanation} & \bf{Examples in text}  \cr
   \hline\hline
$\varphi_x$ & an automorphism of $V_{EE_8}$ defined by&
Equation \eqref{phi_x}\cr & $\varphi_x(u\otimes e^\a) =(-1)^{\la x,\a\ra}
u\otimes e^\a$ &\cr & for $u\in M(1)$ and $\a \in EE_8$& \cr \hline
$Q$ & $A_2\otimes_{\ZZ}E_8$,  lattice isometric to  &  Notation \ref{QandR}\cr
  & the tensor product
of $A_2$ and $E_8$ &\cr \hline
$R$ &   $EEE_8$, lattice isometric to  &  Notation \ref{QandR}, \cr
  &$\sqrt 3$ times the
root lattice $E_8$ & Remark \ref{qinleech}\cr
\hline
  $r$ or $r({nX})$& an automorphism of $V_{E_8}$ induced  & Notation \ref{r}\cr
&by a character  of
 $E_8/K_{nX}$&\cr
\hline
$r_M$ or $r_M(nX)$ & an automorphism of $V_M$, $M\cong EE_8$ & Equation \eqref{rM} \cr
& induced by a character  of  $E_8/K_{nX}$&\cr
\hline
$r_{A_n}$ & an automorphism of $sl_{n+1}(\CC)$ defined by & Equation \eqref{g}, \cr
  &  $r_{A_n}(E_{i,j}) = \omega^{i-j} (E_{i,j})$, $\omega=e^{2\pi i /(n+1)}$
& Equation \eqref{rA2}\cr \hline
$\rho$ & the automorphism $r\otimes 1\otimes 1$ of & Notation \ref{rhoandeta}\cr
&$V_{{E_8}^3}\cong
V_{E_8}\otimes V_{E_8}\otimes V_{E_8}$ & \cr

\hline

$s_{A_n}$ & an automorphism of $sl_{n+1}(\CC)$ & Def. \ref{sAnth}\cr
&such that $r_{A_n}= s_{A_n} \tilde{h}_{A_n} s_{A_n}^{-1}$ &\cr
\hline $s$ & $s=s_{A_2}\otimes s_{A_2}\otimes s_{A_2}\otimes
s_{A_2}$ &Equation \eqref{rands}\cr & is an automorphism of
$V_{E_8}$& \cr\hline
$\sigma$ & $\sigma=s\otimes s\otimes s$,  & Notation \ref{sigma}\cr
 & an automorphism of $V_{E_8^3}$& \cr \hline
$\tau_e$ & the Miyamoto involution associated  & Prop
\ref{prop:1.1}\cr & to a conformal vector $e$, i.e.,&\cr & $\tau_e$
acts as $-1$ on $W_{1/16}$ & \cr & and $1$ on $W_{0}\oplus W_{1/2}$,
&\cr & where $W_h$ is the sum of all & \cr & irreducible
$Vir(e)$-submodules &\cr &  isomorphic to $L(1/2,h)$,&\cr & $h=0,
1/2, 1/16$& \cr \hline
$\theta$ or $\theta_L$ & an involution of $V_L$ defined by& Equation
\eqref{theta}\cr & $\theta(x_1(-n_1)\dots x_k(-n_k)\otimes e^x)=$
&\cr &$ (-1)^{k+\la x,x\ra/2} x_1(-n_1)\dots x_k(-n_k)\otimes e^{-x}
$&\cr \hline
   $V_L$ & the lattice VOA associated with & Equation \eqref{VL}\cr
   & an even lattice $L$& \cr
\hline
 $Weyl(A_n),\cdots,$ & the Weyl group of the  &
  Equation \eqref{hAn},\cr
  $ Weyl(E_8)$& corresponding root system& Equation
 \eqref{rands}\cr \hline
 \end{tabular}
\end{center}

\newpage

\section{McKay's $E_8$ diagram and Leech lattice}\label{sec:2}

We now set up notation for the $3C$ case and establish our path.

Consider the McKay diagram.
\begin{equation}\label{eq:1.1}
\begin{array}{l}
  \hspace{184pt} 3C\\
  \hspace{186.2pt}\circ \vspace{-6.2pt}\\
   \hspace{187.5pt}| \vspace{-6.1pt}\\
 \hspace{187.5pt}| \vspace{-6.1pt}\\
 \hspace{187.5pt}| \vspace{-6.2pt}\\
  \hspace{6pt} \circ\hspace{-5pt}-\hspace{-7pt}-\hspace{-5pt}-
  \hspace{-5pt}-\hspace{-5pt}-\hspace{-5pt}\circ\hspace{-5pt}-
  \hspace{-5pt}-\hspace{-5pt}-\hspace{-6pt}-\hspace{-7pt}-\hspace{-5pt}
  \circ \hspace{-5pt}-\hspace{-5.5pt}-\hspace{-5pt}-\hspace{-5pt}-
  \hspace{-7pt}-\hspace{-5pt}\circ\hspace{-5pt}-\hspace{-5.5pt}-
  \hspace{-5pt}-\hspace{-5pt}-\hspace{-7pt}-\hspace{-5pt}\circ
  \hspace{-5pt}-\hspace{-6pt}-\hspace{-5pt}-\hspace{-5pt}-
  \hspace{-7pt}-\hspace{-5pt}\circ\hspace{-5pt}-\hspace{-5pt}-
  \hspace{-6pt}-\hspace{-6pt}-\hspace{-7pt}-\hspace{-5pt}\circ
  \hspace{-5pt}-\hspace{-5pt}-
  \hspace{-6pt}-\hspace{-6pt}-\hspace{-7pt}-\hspace{-5pt}\circ
  \vspace{-6.2pt}\\
  \vspace{-10pt} \\
  1A\hspace{23pt} 2A\hspace{23  pt} 3A\hspace{22pt} 4A\hspace{21pt}
  5A\hspace{21pt} 6A\hspace{20pt} 4B\hspace{19pt} 2B\\
\end{array}
\end{equation}

By removing the node labeled $3C$, the remaining subdiagram is a Dynkin
diagram of type $A_8$.

Let $M\cong EE_8$ and $\hat{M}=\{ \pm e^\a|\ \a \in M\}$ a central
extension of $M$ by ${\pm1}$ such that $e^\a e^\b =\pm e^{\a+\b}$
and $e^\a e^\b=(-1)^{\langle \a, \b\rangle} e^\b e^\a$. Since
$M\cong EE_8$ is doubly even, we may arrange that $\{e^\a \mid \a
\in M\}$ forms a subgroup of $\hat{M}$ \cite{FLM}.

Let $K$ be a sublattice of $M$ which is isometric to $AA_8$.
Then $|M/K|=3$, say
\begin{equation}\label{K}
 M=K\cup (\b+K)\cup (-\b +K), \text{ for some }\b \in M\setminus K.
\end{equation}
Then the lattice VOA $V_M$ decomposes as
\[
 V_M=V_K\oplus V_{\b+K} \oplus V_{-\b+K}
\]
and we can define an automorphism $r_M$ of $V_M$ by
\[
 r_M:=
\begin{cases}
 1 & \text{ on } V_K,\\
 \xi & \text{ on } V_{\b+K},\\
 \xi^2 & \text{ on } V_{-\b+K},
\end{cases}
\]
where $\xi:=e^{2\pi i/3}$. Note that
\begin{equation}\label{rM}
 r_M=\exp(2\pi i \gamma_0)
\end{equation}
for some $\gamma\in K^*$ (the subscript $0$ refers the $0^{th}$ operator associated to $\g$ by the vertex operator). For example, if we identify
\[
\begin{split}
 K=&\{\sqrt{2}(a_0, a_1, \dots,a_8)\mid  a_i\in \ZZ, \sum a_i=0\},\text{ and }\\
\b=&\frac{\sqrt{2}}{3}(1,1,1,1,1,1,-2,-2,-2),
\end{split}
\]
we may take $\gamma= \frac{\sqrt{2}}{9}(1,1,1,1,1,1,1,1,-8)$.

\begin{nota} \label{def:phisubx}
Let
\begin{equation}\label{eE}
 e_M=\frac{1}{16} \omega_M +\frac{1}{32}\sum_{\a\in M(4)} e^\a,
\end{equation}
where $\omega_M$ is the Virasoro element of $V_M$ and $M(4)=\{\a\in
M| \langle\a, \a\rangle = 4\}$.
\end{nota}
It is shown in \cite{DLMN} that $e_M$ is a simple conformal vector of central
charge $1/2$.

Recall that
\[
M^*=\{ \a\in \QQ\otimes_\ZZ M|\, \la \a, \b\ra\in \ZZ \text{ for all
} \b\in M\}= \frac{1}{2} M. \] For $x\in
M^*$, define a $\ZZ$-linear map
\[
\begin{split}
 \la x, \cdot\ra : M &\to \ \, \ZZ_2 \\
y & \mapsto \la x,y\ra  \mod 2.
\end{split}
\]
Clearly the map
\[
\begin{split}
\varphi: M^* & \longrightarrow \mathrm{Hom}_\ZZ(L, \ZZ_2)\\
         x &\longmapsto \la x, \cdot \ra
\end{split}
\]
is a group homomorphism and $Ker \varphi=2 M^* = M$. Hence, we have
\[
\mathrm{Hom}_\ZZ (L, \ZZ_2)\cong M^*/2M^*\cong \frac{1}2 M/M.
\]
For any $x\in M^*=\frac{1}2 M$, $\la x, \cdot\ra $ induces an automorphism $\varphi_x$ of $V_M$
given by
\begin{equation}\label{phi_x}
\varphi_x(u\otimes e^\a) =(-1)^{\la x,\a\ra} u\otimes e^\a \quad
\text{ for } u\in M(1)\text{ and } \a \in M.
\end{equation}

Note that
$$
\varphi_x (e_M)= \frac{1}{16}\omega_M + \frac{1}{32}
\sum_{\a\in M(4)} (-1)^{\langle
x,\a\rangle}(e^\a+\theta(e^\a))
$$
is also a simple conformal vectors of central charge $1/2$.  Since $\varphi_x$
commutes $\theta$,  $\varphi_x (e_M)$ is also contained in $V_{M}^+$.

We call $\varphi_x (e_M)$ a \textsl{conformal vector of central
charge $1/2$ supported at $M$}.

\begin{nota}\label{nota:eandf}
Let
\begin{equation}\label{eandf}
e:=e_M \quad \text{ and } \quad f:=r_M e_M
\end{equation}
and let $U:=\la e, f\ra$ be the subVOA of $V_{M}$
generated by $e$ and $f$.
\end{nota}
\medskip

\begin{rem}\labttr{dihedralsubvoas}
It was shown in \cite{LYY2} and \cite{Sa} that the subVOA $\la e, f\ra$ generated by $e$ and $f$ in $V_{EE_8}$ is isomorphic to
the subVOA in $V^\natural$ generated by the \cvcch associated to the
$2A$ involutions $x$ and $y$. Therefore, we can transfer the study of the dihedral group
$\la x, y\ra$ to the study of some subVOA of $V^\natural$ isomorphic to $\la e, f\ra$.
\end{rem}

\medskip

Next we shall explain how to derive from $e$ and $f$ a pair of $EE_8$-sublattices in a suitable Niemeier lattice, $N$, such that their sum is isometric to $Q=A_2\otimes E_8$. We shall
also embed $U$ into $V_\Lambda^+$ and study the
corresponding Miyamoto involutions in $V_\Lambda^+$, $V_\Lambda$, $V^\natural$,
etc.
We carry out this program for $N=E_8^3$, though it should be possible to do in any Niemeier lattice which  contains a sublattice isometric to $A_2\otimes E_8$.    Such Niemeier lattices are classified in an appendix to this paper.

 \medskip

\subsection{Lie algebra $sl_{n+1}(\CC)$.}\label{sl}

Let $\mathcal{G}=sl_{n+1}(\CC)$ be the simple Lie algebra of type $A_{n}$.
Let $\epsilon_1, \dots, \epsilon_{n+1}$ be an orthonormal basis of $\RR^{n+1}$.
Then the
root lattice system for $\mathcal{G}$ can be identified with
\[
 \{ \epsilon_i -\epsilon_j|\ 0\leq i\neq j\leq n+1\}.
\]

Let $\mathcal{T}$ be the set of all diagonal matrices in $sl_{n+1}(\CC)$ and
denote by $E_{i,j}$ the matrix whose $(i,j)$-th entry is $1$ and all other
entries are zero. Then $\mathcal{T}$ is
a Cartan subalgebra and  the root space for the root
$(\epsilon_i-\epsilon_j), i\neq j$ is $span\{E_{i,j}\}$.

\medskip

Next we shall define several automorphisms of $sl_{n+1}(\CC)$.

Let $\omega=e^{2\pi i/{(n+1)}}$ and denote
\[
     P=
\begin{pmatrix}
 0&1&0 &\cdots &0&0\\
 0&0&1 &\cdots &0&0\\
 &\vdots&  & \ddots& &\vdots\\
  0&0&0 &\cdots  &1&0\\
 0&0&0 &\cdots  &0&1\\
 1&0&0 &\cdots  &0&0
\end{pmatrix}
\]
and
\[
B=\frac{1}{\sqrt{n+1}} [\ \omega^{ij}\ ]_{ 1 \leq i,j\leq n+1}=
\frac{1}{\sqrt{n+1}}
\begin{pmatrix}
\omega& \omega^2 &\cdots &\omega^{n-1}& \omega^{n}&1\\
\omega^2& \omega^4 &\cdots & &\omega^{2n}&1\\
\vdots & & &\ddots &  &\vdots  \\
\omega^{n}& \omega^{n-1}  &\cdots &\omega^2 &\omega&1\\
1&1&\cdots &1&1&1
\end{pmatrix}.
 \]

\begin{de}\label{sAnth}
Define $\tilde{h}_{A_n}: sl_{n+1}(\CC) \to sl_{n+1}(\CC)$  and $s_{A_{n}}
:
sl_{n+1}(\CC) \to
sl_{n+1}(\CC)$ by
\[
 \tilde{h}_{A_{n}}(A) = P^{-1}A P\quad \text{ and } \quad  s_{A_n}( A) =
B^{-1}
A B
\]
for $A\in sl_{n+1}(\CC)$.
\end{de}
Then
\begin{equation}\label{eta}
\tilde{h}_{A_n}(E_{i,j} )= E_{i+1, j+1},
\end{equation}
where $i,j$ are viewed as integers
$mod\,  {(n+1)}$.

Let $\mathcal{C}= \mathcal{G}^{\tilde{h}_{A_n}}$. Then $\mathcal{C}$ is also a
Cartan subalgebra of $\mathcal{G}$.
Note that $\dim(\mathcal{C})=n$ and $\mathcal{C}= \mathcal{G}^{\tilde{h}_{A_n}}$
is
spanned by $P, P^2, \dots, P^{n}$ and
\[
 B^{-1}PB =\mathrm{diag}(\omega, \omega^2, \dots, \omega^n, 1).
\]
Moreover, we have $s_{A_n}(\mathcal{C})=\mathcal{T}$ and
\[
 s_{A_n} \tilde{h}_{A_n}s_{A_n}^{-1} (E_{i,j}) = \omega^{j-i} E_{i,j}.
\]

Let $r_{A_n}:= s_{A_n} \tilde{h}_{A_n}s_{A_n}^{-1}$. Then
\begin{equation}\label{g}
 r_{A_n}= \exp(\frac{2\pi
i}{n+1}(\frac{n}2,\frac{n}2 -1, \cdots, -\frac{n}2+1,-\frac{n}2)_0).
\end{equation}

Define $\theta: \mathcal{G} \to \mathcal{G}$ by
\begin{equation}\label{-at}
 \theta(A)= -A^t, \quad \text{ for } A\in \mathcal{G}.
\end{equation}

By direct computation, we have
\begin{equation}\label{ts-1ts}
  \theta s_{A_n} \theta s_{A_n}^{-1} (A) = (BB^t) A (BB^t)^{-1}
\end{equation}
and
\[
 BB^t=
\begin{pmatrix}
 0&0&0 &\cdots&0&1&0 \\
 0&0&0 &\cdots&1&0&0\\
 \vdots && &\ddots &&&\vdots\\
 0&1&0 &\cdots &0&0&0 \\
 1&0&0 &\cdots&0 &0&0\\
 0&0 &0 &\cdots &0&0&1
\end{pmatrix}
\]

Note that $BB^t$ is symmetric  and it is a permutation matrix of order $2$.

\medskip
Let $h_{A_n}: A_n \to A_n$ be the $\ZZ$-linear map defined by
\begin{equation}\label{hAn}
h_{A_n} ( \epsilon_{i}- \epsilon_{i+1}) = \epsilon_{i+1}- \epsilon_{i+2},
\end{equation}
where $i$ is again viewed as an integer $mod\, (n+1)$. Then $h_{A_n}$ is an
isometry of $A_n$ and it also defines a Coxeter element in $Weyl(A_n)$.

Now identify $(V_{A_n})_1$ with $\mathcal{G}$ by identifying
$(\epsilon_i-\epsilon_j)(-1)\cdot \mathbf{1}$ with $E_{i,i}-E_{j,j}$ and
$e^{\epsilon_i-\epsilon_j}$ with $E_{i,j}$. Then we have
\[
 \tilde{h}_{A_n}( e^\a) =e^{h_{A_n} \a}
\]
by \eqref{eta} and
\[
 \theta(e^\a) = -e^{-\a}
\]
by \eqref{-at}, for any root $\a\in A_n$.

\medskip

\subsection{From $E_8$ to $A_2\otimes E_8$}\label{rank8to24}

In this section, we shall describe how to derive  a pair $EE_8$-sublattices
$M, M'$ in $E_8^3$ such that the subVOA $\la e_M, e_{M'}\ra$ generated by $e_M$
and $e_{M'}$ is isomorphic to $U=\la e,f \ra$ \refpp{eandf}.

Let $L:=E_8\perp E_8$. 
We first show that $L$ contains a sublattice
isometric to $A_2\otimes E_8$.

\begin{nota}\label{hinE8}
Let $h$ be a fixed point free automorphism of $E_8$ of order $3$.
\end{nota}
Set $M=\{ (x,x)\in E_8\perp E_8\mid x\in E_8\}$ and $M'=\{ (hx,x)\mid x\in
E_8\}$. Then both $M$ and $M'$ are isometric to $EE_8$.

\begin{lem}
$M+M'$ is rootless.
\end{lem}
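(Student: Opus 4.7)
The plan is to write an arbitrary element of $M+M'$ as $(a+hb,\, a+b)$ with $a,b \in E_8$, so its norm in $E_8 \perp E_8$ equals $\|a+hb\|^2 + \|a+b\|^2$. Since both coordinates lie in $E_8$ and the minimum nonzero norm in $E_8$ is $2$, this total can equal $2$ only if one coordinate is $0$ and the other is a root of $E_8$. If $a+b=0$, the surviving coordinate is $(h-1)b$; if $a+hb=0$, it is $(1-h)b$. So the problem reduces to showing that $(h-1)E_8$ contains no vector of norm $2$.

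The key input is that $h$ is fixed-point-free of order $3$, so $1$ is not an eigenvalue of $h$ on $\QQ \otimes E_8$ and the minimal polynomial of $h$ is $t^2+t+1$. In particular $h + h^{-1} = -1$ as an operator on $E_8$. Since $h$ is an isometry, the adjoint of $h-1$ with respect to the $E_8$ form is $h^{-1}-1$, and therefore
\[
\|(h-1)b\|^2 = \langle b,\, (h^{-1}-1)(h-1)b\rangle = \langle b,\, (2 - h - h^{-1})b\rangle = 3\|b\|^2.
\]
The same computation gives $\|(1-h)b\|^2 = 3\|b\|^2$. Because $\|b\|^2$ is a nonnegative even integer on $E_8$, the norm $3\|b\|^2$ is either $0$ or at least $6$, never $2$. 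Hence neither case can produce a root, and $M+M'$ is rootless.

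I do not anticipate a real obstacle here: once the correct parametrization of $M+M'$ is in hand, everything is a two-line norm calculation. The only mild subtlety is to observe that the fixed-point-freeness of $h$ of order $3$ forces the annihilating polynomial to be exactly $t^2+t+1$, which is what converts $(h-1)^*(h-1)$ into the scalar $3$; the rest is bookkeeping in the lattice $E_8$.
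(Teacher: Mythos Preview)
Your proof is correct and follows essentially the same approach as the paper: split into the cases where one coordinate vanishes versus both are nonzero, and in the former case use that $h-1$ (resp.\ $1-h$) triples norms on $E_8$. Your derivation of $\|(h-1)b\|^2 = 3\|b\|^2$ via $h+h^{-1}=-1$ is in fact more explicit than the paper's bare assertion that the norm is $\geq 6$.
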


\begin{proof}
Let $(x+hy, x+y)$ be an element of $M+M'$.

If $x+y=0$, then $x+hy= (h-1)y$ has norm $\geq 6$.

If $x+hy=0$, then $x=-hy$ and $ x+y=(1-h)y$ has norm $\geq 6$.

If $x+y\neq 0$ and $x+hy\neq 0$, then  $(x+hy, x+y)\geq 2+2=4$.
\end{proof}

\begin{lem}\labtt{lem:2.3}
$M+M' \cong A_2\otimes E_8$.
\end{lem}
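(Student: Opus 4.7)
The plan is to construct an explicit $\ZZ$-linear isometry from $A_2 \otimes E_8$ onto $M+M'$. The key algebraic input is that, since $h$ has order $3$ and no nonzero fixed vectors, the relation $1+h+h^2=0$ holds on all of $E_8$: indeed $(1-h)(1+h+h^2)=1-h^3=0$, and $1-h$ is invertible on $E_8\otimes\QQ$ by the fixed-point-free hypothesis. In particular $h+h^2=-1$ as endomorphisms of $E_8$.

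Realize $A_2$ with a basis $\a_1,\a_2$ of Gram matrix $\bigl(\begin{smallmatrix}2&-1\\-1&2\end{smallmatrix}\bigr)$, so that as an abelian group $A_2\otimes E_8 = E_8\oplus E_8$, carrying the quadratic form
\[
Q(x,y)=2\la x,x\ra+2\la y,y\ra-2\la x,y\ra.
\]
I would define
\[
\Phi:E_8\oplus E_8\longrightarrow E_8\oplus E_8,\qquad \Phi(x,y)=(x+h^2 y,\,x+hy)=(x,x)+(h(hy),\,hy).
\]
The second form makes visible that $\Phi(x,0)=(x,x)\in M$ and $\Phi(0,y)=(h\cdot hy,\,hy)\in M'$, so the image is contained in $M+M'$; conversely $\Phi(x,h^2 z)=(x,x)+(hz,z)$, which exhibits surjectivity onto $M+M'$. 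For injectivity, $\Phi(x,y)=0$ forces $(h^2-h)y=0$; since $h^2-h=h(h-1)$ is invertible on $E_8\otimes\QQ$, this gives $y=0$ and then $x=0$.

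Finally I would check that $\Phi$ is an isometry from $(E_8\oplus E_8,Q)$ to $M+M'$. Direct expansion, using that $h$ is an isometry of $E_8$, yields
\[
\la \Phi(x,y),\Phi(x,y)\ra=2\la x,x\ra+2\la y,y\ra+2\la x,(h+h^2)y\ra,
\]
and the Eisenstein relation $h+h^2=-1$ from the first paragraph converts this to $Q(x,y)$. The main subtlety I want to highlight is the choice of twist in $\Phi$: the naive map $(x,y)\mapsto(x,x)+(hy,y)$ is not an isometry, because its cross term is $\la x,(1+h)y\ra$ rather than $-\la x,y\ra$. Inserting a single extra $h$ in the second argument is exactly what turns $1+h$ into $h+h^2=-1$ and produces the required $A_2$-type twisting.
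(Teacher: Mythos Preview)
Your proof is correct and takes a genuinely different, more elementary route than the paper. The paper's argument observes that $M'=(h\oplus 1)M$ with $(h\oplus 1)$ of order $3$ and $M\cap M'=0$, then combines the preceding lemma (that $M+M'$ is rootless) with the classification of rootless sums of two $EE_8$-lattices from \cite{glee8} to identify $M+M'$ with the lattice called $DIH_6(16)\cong A_2\otimes E_8$ there. You bypass both the rootlessness lemma and this external classification by writing down an explicit $\ZZ$-linear isometry, using only the Eisenstein relation $1+h+h^2=0$ forced by the fixed-point-free hypothesis. The paper's route situates the result within a broader structure theory of $EE_8$-pairs that is used elsewhere in the article; your approach is self-contained and in fact shows more generally that for any lattice $L$ with a fixed-point-free order-$3$ isometry $h$, the sum of the diagonal and the $h$-twisted diagonal in $L\perp L$ is isometric to $A_2\otimes L$.
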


\begin{proof}
Clearly $M'= (h\oplus 1)(M)$
and $(h\oplus 1)$ has order $3$. Since
$h$ is
fixed point free, $M\cap M'=0$.

Since $M+M'$ is rootless, by the $EE_8$-theory established in
\cite{glee8}, $M+M'\cong DIH_{6}(16)\cong A_2\otimes E_8$.
\end{proof}

\begin{nota}\label{QandR}
Set $Q:=M+M'\cong A_2\otimes E_8$ and $R:=\sqrt{3}E_8=EEE_8$.
\end{nota}

\medskip

Now let $A_2\perp A_2\perp A_2\perp A_2$ be a sublattice of $E_8$.

Set $\gamma=(1,0,-1)\in A_2$ and define
\begin{equation}\label{rA2}
 r_{A_2}=\exp(\frac{2\pi i}3 {\gamma}_0)
\end{equation}

\begin{nota}\label{r}
Define $r=r_{A_2}\otimes r_{A_2}\otimes r_{A_2}\otimes
r_{A_2}=\exp(\frac{2\pi i}3 \tilde{\gamma}_0) $ as an automorphism of
$V_{E_8}$, where $\tilde{\gamma}=(\gamma, \gamma, \gamma, \gamma)\in A_2^{\perp
4}$.
\end{nota}

\begin{lem}
 $V_{E_8}^{r}= V_{A_8}$.
\end{lem}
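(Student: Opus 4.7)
The plan is to reduce the statement to an assertion about the lattice
\[
K := \{\a \in E_8 \mid \la \tilde\gamma, \a\ra \equiv 0 \pmod 3\},
\]
and then identify $K$ with $A_8$.  Since $r=\exp\bigl(\frac{2\pi i}{3}\tilde\gamma_0\bigr)$ is the inner automorphism of $V_{E_8}$ coming from the zero mode of a Cartan element $\tilde\gamma \in \mathfrak h=\CC\otimes_\ZZ E_8$, it acts trivially on the Heisenberg factor $M(1)$ and by the scalar $\xi^{\la\tilde\gamma,\a\ra}$ on $M(1)\otimes e^\a$, with $\xi=e^{2\pi i/3}$.  Hence $V_{E_8}^r = M(1)\otimes \CC[K] = V_K$.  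A quick check shows $\la\tilde\gamma,\tilde\gamma\ra = 4\cdot 2 = 8 < 18$, so $\tilde\gamma \in E_8=E_8^*$ does not lie in $3E_8$; the character $\a \mapsto \la\tilde\gamma,\a\ra \bmod 3$ is therefore surjective, $[E_8:K]=3$, and $K$ has rank $8$ and discriminant $9$, matching $A_8$.

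To pin $K$ down as $A_8$, I plan to exhibit $72$ roots of $E_8$ inside $K$ and check that they span the full rank-$8$ space.  Decompose $E_8 = \bigsqcup_{\vec a\in C}(c\vec a + A_2^{\perp 4})$ using the tetracode $C$ (the unique ternary $[4,2,3]$ MDS code; its eight nonzero codewords all have weight exactly $3$, and their supports realize the four $3$-subsets of $\{1,2,3,4\}$, each occurring twice) and a minuscule weight $c=\tfrac{1}{3}(2\a_1+\a_2)\in A_2^*$ with $|c|^2=2/3$.  For $\vec a=\vec 0$, each of the $24$ roots in $A_2^{\perp 4}$ lies in a single $A_2$-summand, and $\la\gamma,\rho\ra\in\{\pm 1,\pm 2\}$ for any $A_2$-root $\rho$, so none of them sits in $K$.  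For $\vec a$ of weight $3$, the constraint $|c\vec a+\beta|^2=2$ forces every nonzero $a_ic+\beta_i$ to achieve the coset minimum $2/3$; the three minimum-norm representatives of the nontrivial coset of $A_2^*/A_2$ are $w_1=\tfrac{1}{3}(2,-1,-1)$, $w_2=\tfrac{1}{3}(-1,2,-1)$, $w_3=\tfrac{1}{3}(-1,-1,2)$, yielding $3^3=27$ roots per weight-$3$ codeword.  A direct computation gives $\la\gamma,w_j\ra\in\{1,0,-1\}$, so $\la\tilde\gamma,\a\ra$ is a sum of three independent values from $\{-1,0,1\}$; those summing to a multiple of $3$ are $(0,0,0)$, the six permutations of $(-1,0,1)$, and the two constants $(\pm 1,\pm 1,\pm 1)$, totalling $9$.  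This gives $8\cdot 9=72$ roots in $K$, and combining codewords whose supports cover all four $A_2$-summands shows that these roots span rank $8$.

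With a rank-$8$ root configuration of cardinality $72$ in hand, the classification of root systems forces the root sublattice of $K$ to be of type $A_8$ (the only rank-$8$ root system with $72$ roots).  Since $A_8\subseteq K$ and both lattices have discriminant $9$, the inclusion is an equality: $K=A_8$, and therefore $V_{E_8}^r=V_{A_8}$.  I expect the main obstacle to be the tetracode bookkeeping and the case analysis of norm contributions; once the minimum-representative structure of $A_2^*/A_2$ is in place, the combinatorial count is straightforward.  A conceptually cleaner alternative would be to invoke Kac's classification of order-$3$ inner automorphisms of $\mathfrak e_8$: the Kac coordinates of $\tilde\gamma$ concentrate on a single mark-$3$ node of the affine diagram $\widetilde{E_8}$, and the fixed subalgebra is obtained by deleting that node, which yields $A_8$ directly (this is the $3C$ node of the McKay diagram).
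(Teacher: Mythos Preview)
Your argument is correct and follows the same route as the paper: both reduce the claim to the identification of the sublattice $K=\{\a\in E_8\mid \la\tilde\gamma,\a\ra\equiv 0\pmod 3\}$ with $A_8$. The paper's proof is a single sentence asserting this isometry without justification, whereas you supply a complete verification via the tetracode decomposition, root counting (yielding exactly $72$ roots of rank $8$), and the classification of root systems. Your bookkeeping is accurate, including the key observation that no roots of $A_2^{\perp 4}$ survive while each weight-$3$ tetracode coset contributes $9$ roots; the rank-$8$ spanning claim is valid since any two codewords with distinct supports already cover all four $A_2$-summands. The Kac-coordinates alternative you mention at the end is indeed the conceptual shortcut the paper is implicitly relying on (this is precisely the $3C$-node story set up in \refpp{K}--\refpp{rM}), and would suffice on its own.
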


\begin{proof}
Note that the sublattice
\[
 \{\a\in E_8\mid (\a, \tilde{\gamma})\equiv 0 \mod 3\}
\]
is isometric $A_8$.
\end{proof}

Now by \eqref{g}, we have
\[
r_{A_2}= s_{A_2} \tilde{h}_{A_2}s_{A_2}^{-1},
\]
where $s_{A_2}$ and $\tilde{h}_{A_2}$ are defined as before. Thus,
$r:=r_{A_2}\otimes r_{A_2}\otimes r_{A_2}\otimes r_{A_2}$ is
conjugate to
\begin{equation}\label{tildeh}
\tilde{h}:= \tilde{h}_{A_2}\otimes
\tilde{h}_{A_2}\otimes\tilde{h}_{A_2}\otimes \tilde{h}_{A_2}
\end{equation}
in
$Aut(V_{E_8})$. In fact,
\begin{equation}\label{rands}
 r=s \tilde{h} s^{-1},
\end{equation}
where $s: = s_{A_2}\otimes s_{A_2}\otimes s_{A_2}\otimes
s_{A_2}$.

Recall that $\tilde{h}_{A_2}$ induces an element $h_{A_2}\in
Weyl(A_2)$ (cf. \eqref{hAn}).
Thus\\ $h:=(h_{A_2},h_{A_2}, h_{A_2}, h_{A_2})$ defines
an isometry on $E_8$ and it acts fixed point freely on $E_8$.

\medskip

Fix $h$ as above and embed
\[
\begin{split}
 E_8\perp E_8 &\longrightarrow E_8\perp E_8\perp E_8\\
 (\a, \b)&\longmapsto (\a,\b,0)
\end{split}
\]

We shall choose a section of $E_8^3$ in $\hat{E_8^3}$ such that
$e^{(0,0,0)}$ is the identity element of $\hat{E_8^3}$ and $e^{(\a,
\b,\gamma)}=e^{(\a,0,0)}\cdot e^{(0,\b,0)}\cdot e^{(0,0, \gamma)}$, where  $\a,
\b,\gamma\in E_8$ \cite[Chapter 5]{FLM}.

\begin{nota}\label{rhoandeta}
Define $\rho:= r \otimes 1\otimes 1$  and
$\eta=\tilde{h} \otimes
1\otimes 1$ as automorphisms of $V_{E_8^3}\cong V_{E_8}^{\otimes 3}$.
\end{nota}
\begin{lem}
 $\rho$ keeps $V_M$ invariant and $V_M^{\rho}\cong V_{AA_8}.$
\end{lem}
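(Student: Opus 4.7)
The plan is to exploit the exponential form of $r$, namely $r=\exp\bigl(\tfrac{2\pi i}{3}\tilde\gamma(0)\bigr)$, and observe that its zero-mode acts diagonally on the standard basis of $V_{E_8}$. First I would note that since $[\tilde\gamma(0),y(n)]=0$ for every $y\in\mathfrak h$ and $n\in\mathbb Z$, and since $\tilde\gamma(0)$ annihilates the vacuum, $\tilde\gamma(0)$ acts as zero on all of $M(1)$. Consequently $r$ acts trivially on $M(1)\subset V_{E_8}$ and acts on $e^x$ (for $x\in E_8$) by the scalar
\[
\chi(x):=\exp\bigl(\tfrac{2\pi i}{3}\langle\tilde\gamma,x\rangle\bigr).
\]

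Next I would translate this into the action of $\rho=r\otimes 1\otimes 1$ on $V_M\subset V_{E_8^3}$. A spanning element of $V_M$ has the form
\[
p\bigl(y_1(-n_1),\dots,y_k(-n_k)\bigr)\cdot e^{(x,x,0)},
\qquad y_i\in\mathbb C\otimes_{\mathbb Z}M,\ x\in E_8,
\]
and the section choice at the end of Section~\ref{rank8to24} gives $e^{(x,x,0)}=e^{(x,0,0)}e^{(0,x,0)}$. Since $\rho$ is a VOA automorphism it is multiplicative under the vertex operator action, and since $r$ fixes each $y(-n)$ from the first factor and each $e^{(0,y,0)}$ (nothing in them involves the first tensor factor), one gets
\[
\rho\bigl(p(y_i(-n_i))\cdot e^{(x,x,0)}\bigr)=\chi(x)\,p(y_i(-n_i))\cdot e^{(x,x,0)}.
\]
This immediately shows $\rho(V_M)\subseteq V_M$, establishing invariance.

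Finally, the fixed subspace $V_M^{\rho}$ is spanned by those elements with $\chi(x)=1$, i.e.\ with $\langle\tilde\gamma,x\rangle\equiv 0\pmod 3$. The preceding lemma identified this sublattice of $E_8$ as an $A_8$, so
\[
V_M^{\rho}=\operatorname{span}\bigl\{p(y_i(-n_i))\cdot e^{(x,x,0)}\mid x\in A_8\bigr\}=V_{M_0},
\]
where $M_0=\{(x,x,0)\mid x\in A_8\}$. The induced bilinear form on $M_0$ from $E_8\perp E_8\perp E_8$ is $\langle(x,x,0),(y,y,0)\rangle=2\langle x,y\rangle$, i.e.\ twice the standard $A_8$ form, so $M_0\cong AA_8$ and $V_M^{\rho}\cong V_{AA_8}$.

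There is no real obstacle here; the whole argument is essentially the fact that an exponentiated zero-mode scales $e^x$ by a character of the lattice, combined with the earlier identification of $\ker\chi\cap E_8$ with $A_8$. The only care needed is in handling the diagonal embedding $M\hookrightarrow E_8^3$, where one must verify that $\rho$ commutes with the $M$-Heisenberg operators — which it does because $r$ is trivial on $M(1)$ of the first factor.
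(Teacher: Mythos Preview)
Your proof is correct. The paper states this lemma without proof, treating it as immediate from the preceding material; your argument is precisely the natural one the paper is implicitly relying on --- namely that $\rho=\exp\bigl(\tfrac{2\pi i}{3}(\tilde\gamma,0,0)_0\bigr)$ acts on $V_M$ exactly as the automorphism $r_M$ defined at the start of Section~\ref{sec:2}, so that $V_M^{\rho}=V_K$ with $K=\{(x,x,0)\mid x\in A_8\}\cong AA_8$.

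One small remark on phrasing: your final parenthetical says $\rho$ commutes with the $M$-Heisenberg operators ``because $r$ is trivial on $M(1)$ of the first factor.'' This is true but slightly indirect, since the $M$-Heisenberg operators $(z,z,0)(-n)$ involve both the first and second tensor factors. Cleaner is simply to note that any exponential of a zero-mode $\exp(c\,\alpha(0))$ commutes with every Heisenberg operator $\beta(n)$ in the ambient $V_{E_8^3}$, so in particular fixes all of $M(1)_M$. This is what you used at the start anyway; the reasoning does not actually need the tensor decomposition of $e^{(x,x,0)}$ at all.
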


For any even lattice $L$, we define $\theta:V_L\to V_L$ by
\begin{equation}\label{theta}
\begin{split}
 & \theta(\a_1(-n_1)\cdots \a_k(-n_k)\otimes e^\a )\\= &(-1)^k \a_1(-n_1)\cdots
\a_k(-n_k)\otimes((-1)^{\la \a,\a\ra/2} e^\a)
\end{split}
\end{equation}
(cf. \cite{FLM,M1}).
Note that if $L=A_n$ is a root lattice of type $A_n$, by identifying
$(V_L)_1$ with $sl_{n+1}(\CC)$, $(\epsilon_i-\epsilon_j)(-1)$ with $E_{i,i}-E_{j,j}$ and
$e^{\epsilon_i-\epsilon_j}$ with $E_{i,j}$, we have
\[
 \theta|_{sl_{n+1}(\CC)} (A) =- A^t, \qquad A\in sl_{n+1}(\CC)
\]

Now let $e:=e_M$ be a conformal vector in $V_M$ as defined in
\eqref{eE} and  define
$f:=\rho e$.

By the definition of $\theta$, it is clear that
\[
 \theta(e^{(\a,\a,0)})= e^{-(\a,\a,0)} \quad \text{ for all } \a\in E_8
\]
and hence $e$ is fixed by $\theta$.

By \eqref{ts-1ts}, we have
\[
 \theta s_{A_2}\theta s_{A_2}^{-1} = (BB^t) A(BB^t)^{-1},
\]
where $BB^t=\begin{pmatrix}
             0&1&0\\
             1&0&0\\
             0&0&1
            \end{pmatrix}
$ is a permutation matrix of order $2$. Thus, $\theta s\theta s^{-1}$
induces an isometry $\mu: =\overline{\theta s \theta s^{-1}}$ of $E_8$.
This implies
\[
 \theta s\theta s^{-1} (M(1)) \subset M(1)
\]
and
\begin{equation}\label{tsa}
 \theta s \theta s^{-1}(e^\a) =\epsilon(\a) e^{\mu \a}, \quad \text{
for
} \a \in E_8,
\end{equation}
where $\epsilon(\a)=\pm 1$.

\begin{nota}\label{sigma}
Define $\sigma:=s\otimes s\otimes s \in Aut(V_{E_8}^{\otimes ^3})$, considered
as an automorphism of $V_{E_8^3}$.
\end{nota}
\begin{lem}
 $\theta\sigma\theta\sigma^{-1}(e^{(\a,\a,0)})= e^{(\mu\a,\mu\a,0)}$ for any
$\a\in E_8$.
\end{lem}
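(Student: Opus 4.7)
The plan is to reduce the claim to equation \eqref{tsa} by exploiting the tensor-product structure on both the lattice and the automorphism sides. The section of $\widehat{E_8^3}$ was chosen in the previous paragraph so that $e^{(\alpha,\beta,\gamma)} = e^{(\alpha,0,0)}e^{(0,\beta,0)}e^{(0,0,\gamma)}$, which identifies $V_{E_8^3}$ with $V_{E_8}^{\otimes 3}$ and sends $e^{(\alpha,\alpha,0)}$ to $e^\alpha \otimes e^\alpha \otimes e^0$.

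First I would verify that $\theta_{E_8^3}$ factors as $\theta_{E_8}\otimes \theta_{E_8}\otimes \theta_{E_8}$ under this identification. On the Heisenberg part this is immediate from \eqref{theta}. On the group algebra part, for $\beta = (\alpha_1,\alpha_2,\alpha_3)\in E_8^3$ one has
\[
(-1)^{\langle \beta,\beta\rangle/2} = \prod_{i=1}^{3} (-1)^{\langle \alpha_i,\alpha_i\rangle/2},
\]
so the sign decomposes multiplicatively across the tensor factors. Since $\sigma = s\otimes s\otimes s$ by definition (Notation \ref{sigma}), the operator $\theta\sigma\theta\sigma^{-1}$ also factors as $(\theta s\theta s^{-1})\otimes(\theta s\theta s^{-1})\otimes(\theta s\theta s^{-1})$.

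Applying this to $e^\alpha\otimes e^\alpha \otimes e^0$ and using \eqref{tsa} on each of the first two factors yields
\[
\theta\sigma\theta\sigma^{-1}(e^{(\alpha,\alpha,0)}) = \epsilon(\alpha)e^{\mu\alpha}\otimes \epsilon(\alpha)e^{\mu\alpha}\otimes (\theta s\theta s^{-1})(e^0).
\]
The third factor is the vacuum $e^0 = \mathbf{1}$, which is fixed by every VOA automorphism, and the signs on the first two factors satisfy $\epsilon(\alpha)^2 = 1$. Consequently the right hand side collapses to $e^{(\mu\alpha,\mu\alpha,0)}$, as required.

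The only step requiring care is the bookkeeping on the cocycle/section: one must confirm that the chosen section behaves well enough under the tensor decomposition that no stray $\pm 1$ enters when identifying $e^{(\alpha,\alpha,0)}$ with $e^\alpha\otimes e^\alpha\otimes e^0$. This is built into the choice of section stated just before Notation \ref{rhoandeta}, so once that identification is invoked the rest of the computation is mechanical, and the key algebraic content is entirely carried by \eqref{tsa} together with $\epsilon(\alpha)^2 = 1$.
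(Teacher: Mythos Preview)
Your proof is correct and follows essentially the same approach as the paper's: both factor $e^{(\alpha,\alpha,0)}$ across the three $E_8$ summands (the paper writes this multiplicatively as $e^{(\alpha,0,0)}e^{(0,\alpha,0)}$, you write it as a tensor), apply \eqref{tsa} componentwise, and use $\epsilon(\alpha)^2=1$ to cancel the signs. Your version is simply more explicit about checking that $\theta$ respects the tensor decomposition and about handling the vacuum in the third slot.
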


\begin{proof}
 By \eqref{tsa},
\[
 \theta\sigma\theta\sigma^{-1}(e^{(\a,\a,0)})= (\epsilon(\a) e^{(\mu \a, 0,0)})
(\epsilon(\a) e^{(0,\mu \a, 0)})= e^{(\mu\a, \mu\a,0)}.
\]
for any $\a \in E_8$.
\end{proof}

Hence,  we have the following corollaries.

\begin{coro}
$\theta\sigma\theta\sigma^{-1}$ fixes $e$.
\end{coro}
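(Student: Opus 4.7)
Recall from Notation \ref{def:phisubx} that
\[
 e = e_M = \tfrac{1}{16}\omega_M + \tfrac{1}{32}\sum_{\a\in M(4)} e^\a,
\]
so it suffices to show separately that $\theta\sigma\theta\sigma^{-1}$ fixes the Virasoro element $\omega_M$ of $V_M$ and fixes the sum $\sum_{\a\in M(4)} e^\a$.

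\medskip

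The plan for the second piece is immediate from the preceding lemma. Since $M=\{(\a,\a,0)\mid \a\in E_8\}$, the set $M(4)$ is in bijection with the $240$ roots of $E_8$ via $(\a,\a,0)\leftrightarrow \a\in E_8(2)$. The previous lemma gives
\[
\theta\sigma\theta\sigma^{-1}\bigl(e^{(\a,\a,0)}\bigr)=e^{(\mu\a,\mu\a,0)}
\]
with coefficient $+1$ (no cocycle sign), and $\mu$ is an isometry of $E_8$, hence permutes $E_8(2)$, hence permutes $M(4)$. Therefore $\sum_{\a\in M(4)} e^{\a}$ is carried to itself.

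\medskip

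For $\omega_M$, the plan is to observe that $\theta\sigma\theta\sigma^{-1}$ restricts to a VOA automorphism of the sub-VOA $V_M\subset V_{E_8^3}$. Unwinding definitions,
\[
 \theta\sigma\theta\sigma^{-1} = (\theta s\theta s^{-1})\otimes (\theta s\theta s^{-1})\otimes (\theta s\theta s^{-1}),
\]
and equation \eqref{ts-1ts} identifies $\theta s_{A_2}\theta s_{A_2}^{-1}$ on $V_{A_2}$ with the automorphism coming from conjugation by the orthogonal permutation matrix $BB^t$. Hence, via $E_8\cong A_2^{\perp 4}$, each factor $\theta s\theta s^{-1}$ is the automorphism of $V_{E_8}$ induced by the genuine lattice isometry $\mu$ of $E_8$. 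Consequently $\theta\sigma\theta\sigma^{-1}$ is the automorphism of $V_{E_8^3}$ induced by the lattice isometry $\mu\oplus\mu\oplus\mu$ of $E_8^3$, and the latter sends $(\a,\a,0)\mapsto(\mu\a,\mu\a,0)\in M$, i.e.\ preserves $M$. Thus $\theta\sigma\theta\sigma^{-1}$ restricts to a VOA automorphism of $V_M$; being a VOA automorphism, it must fix the (unique) Virasoro element $\omega_M$ of $V_M$.

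\medskip

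Combining the two pieces yields $\theta\sigma\theta\sigma^{-1}(e)=e$. The only delicate point is the identification of $\theta\sigma\theta\sigma^{-1}$ as induced by a bona fide lattice isometry $\mu^{\oplus 3}$ of $E_8^3$ preserving $M$ (rather than merely by some VOA automorphism that happens to map $e^{(\a,\a,0)}$ to $e^{(\mu\a,\mu\a,0)}$), because this is what secures preservation of $\omega_M$ through the Heisenberg part; that identification is handed to us essentially for free by \eqref{ts-1ts} together with the tensor decomposition $\sigma=s\otimes s\otimes s$.
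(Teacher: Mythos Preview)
Your proof is correct and is essentially what the paper has in mind; the paper states this corollary without proof, treating it as immediate from the preceding lemma together with the observation (made just before that lemma) that $\theta s\theta s^{-1}(M(1))\subset M(1)$ and that $\theta s\theta s^{-1}$ induces the isometry $\mu$ on $E_8$.

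One minor remark on your treatment of $\omega_M$: you appeal to the fact that $\theta\sigma\theta\sigma^{-1}$ restricts to a VOA automorphism of $V_M$ and then invoke uniqueness of the conformal vector. This is fine, but a shade more than necessary. A more direct route is to note that, since $\theta s\theta s^{-1}$ preserves $M(1)_{E_8}$ and acts there via the isometry $\mu$ of $\mathfrak h_{E_8}$, the tensor cube $\theta\sigma\theta\sigma^{-1}=(\theta s\theta s^{-1})^{\otimes 3}$ acts on $\mathfrak h_{E_8^3}$ by $\mu\oplus\mu\oplus\mu$, which carries $\mathfrak h_M=\{(a,a,0)\}$ isometrically to itself; hence it sends an orthonormal basis of $\mathfrak h_M$ to another and fixes $\omega_M=\tfrac12\sum_i\beta_i(-1)^2\mathbf{1}$ outright. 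Also, your sentence identifying $\theta\sigma\theta\sigma^{-1}$ with ``the automorphism of $V_{E_8^3}$ induced by the lattice isometry $\mu^{\oplus 3}$'' is very slightly stronger than what the paper asserts (equation~\eqref{tsa} allows signs $\epsilon(\a)$ on the group-algebra part); but those signs are irrelevant for $\omega_M$, and the lemma already shows the signs cancel on $e^{(\a,\a,0)}$, so your conclusion stands.
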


\begin{coro}
$\sigma\theta\sigma^{-1}$ fixes $e$.
\end{coro}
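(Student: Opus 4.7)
The plan is to deduce this corollary directly from the preceding one together with the observation that $\theta$ fixes $e$. Recall from the paragraph above the previous lemma that $\theta(e^{(\alpha,\alpha,0)}) = e^{-(\alpha,\alpha,0)}$ for all $\alpha \in E_8$, and because $e = e_M = \tfrac{1}{16}\omega_M + \tfrac{1}{32}\sum_{\alpha \in M(4)} e^\alpha$ pairs each $e^{(\alpha,\alpha,0)}$ with $e^{-(\alpha,\alpha,0)} = \theta(e^{(\alpha,\alpha,0)})$ in the sum, while $\omega_M$ is manifestly $\theta$-fixed, we have $\theta(e) = e$.

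Next I would combine this with the previous corollary, which says $\theta\sigma\theta\sigma^{-1}(e) = e$. Since $\theta$ is an involution, applying $\theta = \theta^{-1}$ to both sides of the identity $\theta\sigma\theta\sigma^{-1}(e) = e$ gives
\[
\sigma\theta\sigma^{-1}(e) \;=\; \theta^{-1}(e) \;=\; \theta(e) \;=\; e,
\]
which is exactly the claim.

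The argument is thus a one-line consequence of (i) $\theta(e) = e$ and (ii) the previous corollary, so there is no real obstacle beyond noting that $\theta^2 = 1$ and that the $\theta$-fixedness of $e$ was already established just before the statement of the previous corollary. No further lattice or VOA computation is needed.
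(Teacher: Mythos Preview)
Your proof is correct and is essentially the same as the paper's: the paper writes $\sigma\theta\sigma^{-1} = \theta\cdot(\theta\sigma\theta\sigma^{-1})$ and observes that both factors fix $e$, which is exactly your argument of applying $\theta$ to both sides of $\theta\sigma\theta\sigma^{-1}(e)=e$ and using $\theta^2=1$ and $\theta(e)=e$.
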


\begin{proof}
First we note that $\sigma\theta\sigma^{-1}=
\theta(\theta\sigma\theta\sigma^{-1})$. Since
$\theta$ and $\theta\sigma\theta\sigma^{-1}$
both fix $e$, so does $\sigma\theta\sigma^{-1}$.
\end{proof}

\begin{lem}
 $\sigma^{-1} e$ and $\sigma^{-1} f=\sigma^{-1}\rho  e$ are fixed by
$\theta$.
\end{lem}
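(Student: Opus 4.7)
The plan is to deduce both statements from the previous corollary together with an observation that $\rho$ and $\eta$ are conjugate by $\sigma$.

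First, for $\sigma^{-1}e$: the corollary just established says that $\sigma\theta\sigma^{-1}$ fixes $e$, i.e.\ $\sigma\theta\sigma^{-1}(e)=e$. Multiplying by $\sigma^{-1}$ on the left gives $\theta\sigma^{-1}(e) = \sigma^{-1}(e)$, which is exactly the first assertion.

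For $\sigma^{-1}f = \sigma^{-1}\rho\,e$, the key observation is the conjugacy relation
\[
\rho \;=\; \sigma\,\eta\,\sigma^{-1}.
\]
This follows directly from $r = s\,\tilde h_{A_2\cdots A_2}\,s^{-1}$ (equation \eqref{rands}), tensored with identities on the two remaining $V_{E_8}$-factors: $\sigma\eta\sigma^{-1} = (s\otimes s\otimes s)(\tilde h\otimes 1\otimes 1)(s^{-1}\otimes s^{-1}\otimes s^{-1}) = s\tilde h s^{-1}\otimes 1\otimes 1 = r\otimes 1\otimes 1 = \rho$. Consequently $\sigma^{-1}\rho = \eta\sigma^{-1}$ and
\[
\sigma^{-1}f \;=\; \sigma^{-1}\rho\,e \;=\; \eta\,\sigma^{-1}e.
\]

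Next I would show that $\theta$ commutes with $\eta$ on $V_{E_8^3}$. Since $\eta = \tilde h\otimes 1\otimes 1$ and $\theta$ respects the tensor decomposition $V_{E_8^3}\cong V_{E_8}^{\otimes 3}$ componentwise, it suffices to check that $\theta$ commutes with $\tilde h = \tilde h_{A_2}^{\otimes 4}$ on the first $V_{E_8}$-factor. On $M(1)$-vectors this is immediate because $h$ is a $\ZZ$-linear isometry of $E_8$ and $\theta$ is just multiplication by $(-1)^k$ on a length-$k$ monomial. On group-algebra elements $e^\alpha$, we use that $\tilde h$ is a lift of $h$ to $\hat{E_8}$ with $\tilde h(e^\alpha) = \epsilon_\alpha e^{h\alpha}$; comparing $\theta\tilde h(e^\alpha) = (-1)^{\langle\alpha,\alpha\rangle/2}\epsilon_\alpha e^{-h\alpha}$ with $\tilde h\theta(e^\alpha) = (-1)^{\langle\alpha,\alpha\rangle/2}\epsilon_{-\alpha}e^{-h\alpha}$ reduces the check to the identity $\epsilon_\alpha = \epsilon_{-\alpha}$, which holds because $\tilde h$ is a lattice automorphism preserving the standard section (since $h$ is an isometry, the signs cancel consistently).

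With $\theta\eta = \eta\theta$ in hand, the second assertion follows:
\[
\theta(\sigma^{-1}f) \;=\; \theta\,\eta\,\sigma^{-1}e \;=\; \eta\,\theta\,\sigma^{-1}e \;=\; \eta\,\sigma^{-1}e \;=\; \sigma^{-1}f,
\]
where the third equality is the first part of the lemma. The main obstacle is the commutation $\theta\eta = \eta\theta$, and specifically the cocycle bookkeeping for $\tilde h$; if the section of $\hat{E_8}$ over $E_8$ has not been chosen to be $h$-equivariant, one must verify by direct calculation (using that $h$ permutes roots and $\tilde h_{A_2}$ is the specific conjugation by $P$ of Definition \ref{sAnth}) that the signs $\epsilon_\alpha$ are symmetric in $\alpha\mapsto -\alpha$, which is standard for lifts of lattice isometries.
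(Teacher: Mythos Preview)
Your proof is correct and follows essentially the same route as the paper: deduce $\theta$-invariance of $\sigma^{-1}e$ from the corollary $\sigma\theta\sigma^{-1}(e)=e$, then use $\rho=\sigma\eta\sigma^{-1}$ together with $\theta\eta=\eta\theta$ to handle $\sigma^{-1}f=\eta\sigma^{-1}e$. The paper's proof is line-for-line the same chain of equalities; the only difference is that the paper simply asserts $\theta\eta=\eta\theta$ without comment, whereas you supply the cocycle verification.
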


\begin{proof}
Since $e$ is fixed by $\sigma\theta\sigma^{-1}$, we have
\[
 \theta\sigma^{-1} e= \sigma^{-1}( \sigma\theta\sigma^{-1} (e)) = \sigma^{-1}e.
\]
Moreover,
\[
\begin{split}
 \theta\sigma^{-1} {\rho} e &= \theta {\eta}\sigma^{-1} e\qquad
(\text{since } {\rho}=\sigma {\eta}\sigma^{-1})\\
& = {\eta}\theta\sigma^{-1}e \qquad (\text{since }
\theta{\eta}={\eta}\theta)\\
& = {\eta}\sigma^{-1}e \\
&= \sigma^{-1}{\rho} e.
\end{split}
\]
Thus, $\sigma^{-1} f$ is fixed by $\theta$.
\end{proof}

\begin{lem}
Set $e'=\sigma^{-1} e$ and $f'=\sigma^{-1} f$. Then
$ e'\in V_M^+$ and $f'\in V_{M'}^+$ and hence $e= \varphi_x e_M$ and
$f'=\varphi_y e_{M'}$ for some $x\in \frac{1}2 M$ and $y\in \frac{1}2M'$, where
$\varphi_x$ and $\varphi_y$ are defined as in Notation \ref{def:phisubx}.
\end{lem}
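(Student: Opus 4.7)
The plan is to combine the $\theta$-invariance obtained in the preceding lemma with the classification of conformal vectors of central charge $1/2$ inside $V_{EE_8}^+$. Since the preceding lemma yields $\theta e' = e'$ and $\theta f' = f'$, both vectors already lie in the $\theta$-fixed subspace of $V_{E_8^3}$; what remains is to localize each to the expected sub-VOA, $e'\in V_M$ and $f'\in V_{M'}$. Once these inclusions are in hand, the classification of \cvcch in $V_M^+\cong V_{EE_8}^+$, which says every such vector has the form $\varphi_z e_M$ for some $z\in\tfrac12 M/M$ (cf.\ \cite{DLMN} and the remarks before Notation \ref{nota:eandf}), will deliver $e'=\varphi_x e_M$ and $f'=\varphi_y e_{M'}$.

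To show $e'\in V_M$, I would argue directly from the explicit formula $e_M=\tfrac1{16}\omega_M+\tfrac1{32}\sum_{\alpha\in M(4)} e^\alpha$.  Because $M=\{(x,x,0): x\in E_8\}$ is supported only on the first two tensor factors, the third component of $\sigma^{-1}=s^{-1}\otimes s^{-1}\otimes s^{-1}$ acts trivially on the vacuum in that slot, and the computation reduces to the action of $s^{-1}\otimes s^{-1}$ on $V_{E_8\perp E_8}$.  Although $s$ does not preserve the Heisenberg subalgebra of $V_{E_8}$ (since $s_{A_2}$ does not normalize the maximal torus of $SL_3(\CC)$), the diagonal form of $M$ together with the identical action of $s$ on the two relevant tensor factors should force the images of $\omega_M$ and of the symmetric sum $\sum_{\alpha\in M(4)} e^\alpha$ to land back inside $V_M$.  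Combined with $\theta e'=e'$, this gives $e'\in V_M^+$.

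For $f'$, I would use $\rho=\sigma\eta\sigma^{-1}$ to rewrite $f'=\sigma^{-1}f=\sigma^{-1}\rho e=\eta\sigma^{-1}e=\eta e'$.  The automorphism $\eta$ is induced by the lattice isometry $(h,1,1)$ of $E_8^3$, which carries $M$ onto $M'$, so $\eta(V_M)=V_{M'}$.  Hence $f'=\eta e'\in V_{M'}$, and with $\theta$-invariance we obtain $f'\in V_{M'}^+$.  Applying the classification then yields $e'=\varphi_x e_M$ and $f'=\varphi_y e_{M'}$ for some $x\in\tfrac12 M$, $y\in\tfrac12 M'$.

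The main technical obstacle is the step $\sigma^{-1}e\in V_M$: since $\sigma$ is a Lie-group automorphism rather than a lattice automorphism, the sub-VOA $V_M$ is not literally $\sigma$-invariant, and one must verify that the specific vector $e_M$ is returned to $V_M$ by $\sigma^{-1}$.  This exploits the coordinated action of $s\otimes s$ across the two tensor factors of the diagonal embedding $E_8\hookrightarrow E_8\perp E_8$; alternatively, $e'$ may be identified via its lattice-support decomposition in $V_{E_8^3}=\bigoplus_\beta M(1)\otimes\CC e^\beta$ together with the known action of $s$ on root vectors of $E_8$.
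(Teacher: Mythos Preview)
Your outline mirrors the paper's proof: show $e'\in V_M^+$, deduce $f'=\eta e'\in V_{M'}^+$ via $\rho=\sigma\eta\sigma^{-1}$, then invoke the classification. The treatment of $f'$ is identical to the paper's. There are two substantive differences.

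On the inclusion $e'\in V_M$: the paper does not compute with the explicit formula for $e_M$ but simply asserts that $\sigma^{-1}$ keeps $V_M$ invariant, so $\sigma^{-1}e\in V_M$ follows at once. You flag this invariance as the main obstacle, and you are right that $s$ does not normalize the standard Cartan of $\mathfrak{e}_8$, so the assertion is not as innocent as it looks. A clean way to settle it is to recall that $e_M$ is precisely the conformal vector of the coset $(E_8)_1\otimes(E_8)_1/(E_8)_2$ (this is what \cite{DLMN} establishes), and that commutant is fixed pointwise by the diagonal action $g\otimes g$ of any $g\in Aut(V_{E_8})$; in particular $(s\otimes s)e_M=e_M$, so $\sigma^{-1}e=e$ and there is nothing further to compute.

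The genuine gap in your proposal is the final step. The classification of \cvcch in $V_M^+\cong V_{EE_8}^+$ (see \cite{O+102,LSY}) yields \emph{two} families, not one: the $EE_8$-type vectors $\varphi_x(e_M)$ for $x\in\tfrac12 M/M$, and the $AA_1$-type vectors $\omega^\pm(\alpha)=\tfrac14\alpha(-1)^2\cdot\mathbf{1}\pm\tfrac14(e^\alpha+e^{-\alpha})$ for $\alpha\in M(4)$. Your statement that ``every such vector has the form $\varphi_z e_M$'' omits the second family, and without ruling it out the conclusion $e'=\varphi_x e_M$ does not follow. The paper closes this gap by observing that $\tau_e$ acts as $-1$ on $(V_M)_1$ (since $\tau_{e_M}$ restricted to $V_M$ is $\theta_M$), hence so does its conjugate $\tau_{e'}$; this forces $e'$ to be of $EE_8$-type, because for an $AA_1$-type vector the Miyamoto involution $\varphi_\alpha$ acts trivially on the Heisenberg weight-one space.
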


\begin{proof}
 Since $\sigma^{-1}$ keeps $V_M$ invariant, we have $\sigma^{-1} e\in V_M$ and
thus $e'\in V_M^+$ as $\sigma^{-1} e $ is fixed by $\theta$.

On the other hand, ${\eta}$ maps $V_M$ to $V_{M'}$. Therefore,
\[
 f'=\sigma^{-1} {\rho} e ={\eta} \sigma^{-1}e \in V_{M'}
\]
and thus $f'\in V_{M'}^+$.

Next we note that $\tau_e$ acts on $(V_{M})_1=(M(1))_1$ as $-1$. Thus,
$\tau_{\sigma^{-1}e} =\sigma\tau_e\sigma^{-1}$ also acts as $-1$ on $(V_M)_1$.
Now by the classification of conformal vectors of
central charge $1/2$ in $V_{EE_8}^+$ (cf. \cite{O+102,LSY}), we have
$\sigma^{-1} e= \varphi_x e_M$ for some $x\in E_8$. Similarly, we have
$f'=\varphi_y e_{M'}$ for some $y\in E_8$.
\end{proof}

\begin{thm}\label{embedofL}
The Leech lattice $\Lambda$ contains a sublattice isometric to $A_2\otimes E_8$
and hence  $U:=\la e , f \ra$,   the subVOA generated by $e$
and $f$, can be embedded into $V_\Lambda^+$.
\end{thm}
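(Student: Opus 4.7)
The plan is to break the theorem into two parts: (a) show that $\L$ contains a sublattice isometric to $Q\cong A_2\otimes E_8$, and (b) deduce the VOA embedding $U\hookrightarrow V_\L^+$ from (a) together with the constructions already in place. Part (b) should be essentially automatic once (a) is established; the substance of the theorem lies in (a).

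For the lattice embedding, I would work with the orthogonal sum $Q\perp R$, where $R=\sqrt 3 E_8=EEE_8$.  Both $Q$ and $R$ have rank $8$ or $16$ (respectively) and determinant $3^8$, so $Q\perp R$ has rank $24$ and discriminant group $\dg{Q\perp R}=\dg{Q}\oplus \dg{R}$ of order $3^{16}$.  Even unimodular overlattices of $Q\perp R$ correspond to maximal totally isotropic subgroups of this discriminant form, equivalently to ``gluing maps'' $\phi:\dg{Q}\to \dg{R}$ that reverse the discriminant quadratic form.  The strategy, anticipated in Step IV of the introduction, is first to identify a standard gluing $\alpha$ for which the overlattice $L_\alpha$ is the Niemeier lattice $E_8^3$ containing $Q=M+M'$ as in Lemma \ref{lem:2.3}, and then to twist $\alpha$ by an element $u$ coming from triality on $Spin^+(8,3)$ to produce a new gluing $\beta=u\alpha u^{-1}$ whose associated overlattice $L_\beta$ is rootless.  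Because the only even unimodular rootless lattice of rank $24$ is $\L$, we then have $L_\beta\cong \L$, and since $Q\perp R\subset L_\beta$ by construction, the desired embedding $Q\hookrightarrow \L$ follows.

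For the VOA embedding, once $Q\hookrightarrow \L$ is fixed, the induced inclusion $V_Q\hookrightarrow V_\L$ intertwines the involutions $\theta_Q$ and $\theta_\L$, so it restricts to $V_Q^+\hookrightarrow V_\L^+$.  By the lemma just above the theorem, $e':=\sigma^{-1}e=\varphi_x e_M\in V_M^+\subset V_Q^+$ and $f':=\sigma^{-1}f=\varphi_y e_{M'}\in V_{M'}^+\subset V_Q^+$, so $\la e',f'\ra\subset V_Q^+$.  Since $\sigma\in Aut(V_{E_8^3})$, conjugation by $\sigma$ gives an isomorphism $\la e',f'\ra\cong \la e,f\ra=U$.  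Composing, $U$ is isomorphic to a subVOA of $V_Q^+\hookrightarrow V_\L^+$, which is the claimed embedding.

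The main obstacle is the lattice step: producing the twisted gluing $\beta$ and verifying that $L_\beta$ is rootless.  Any potential root of $L_\beta$ not already in $Q\perp R$ (which is rootless because $Q$ is rootless by Lemma \ref{lem:2.3} and $R=\sqrt 3 E_8$ has minimum $6$) must arise from a short glue vector, i.e.\ a preimage in $Q^*\oplus R^*$ of a small representative of a class in the graph of $\beta$.  The twisting element $u$ must therefore be chosen in the outer half of a covering of $Sym_9$ so that it acts nontrivially on $\dg{Q}$ via the half-spin representation of $Spin^+(8,3)$, shifting precisely those glue classes whose minimal representatives would give roots.  Verifying this requires the detailed $D_4$-triality computation that occupies the rest of Step IV, and it is the step where the ``loss of half the Weyl group'' becomes visible.
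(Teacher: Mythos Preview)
Your argument for the VOA embedding (part (b)) is essentially identical to the paper's: both use that $\sigma^{-1}$ carries $U=\la e,f\ra$ isomorphically onto $\la e',f'\ra\subset V_Q^+$, and then embed $V_Q^+\subset V_\Lambda^+$ once $Q\hookrightarrow\Lambda$ is known.

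For the lattice embedding (part (a)), however, you take a far more elaborate route than the paper does. The paper's proof of this theorem is a single sentence: it simply cites an explicit embedding of $A_2\otimes E_8$ into $\Lambda$ given in the Appendix of \cite{glee8}, and is done. (Remark \ref{qinleech} immediately following the theorem gives yet another short argument: take $h\in O(\Lambda)$ of order $3$ and trace $0$; then $\Lambda^+(h)\cong R$ and its annihilator is $ann_\Lambda(R)\cong Q$.) By contrast, you invoke the full triality/gluing machinery of Step IV --- the construction of $K_0\le Spin^+(8,3)$, the twist $\beta=u\alpha u^{-1}$, and the rootlessness of $L_\beta$ --- all of which is developed only in Sections 4--6 of the paper, \emph{after} this theorem. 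Your approach is mathematically sound (indeed Lemma \ref{lbetarootless} establishes exactly that $L_\beta\cong\Lambda$), but it is heavy machinery for a fact the authors regard as already known, and in the paper's linear order it would constitute a forward reference. The payoff of the triality argument in the paper is not merely existence of the embedding, but the identification of the stabilizer as $Sym_3\times 2{\cdot}Alt_9$ and the explanation of why only half the Weyl group survives; for the bare existence statement in Theorem \ref{embedofL}, the citation suffices.
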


\begin{proof}
An explicit embedding of $A_2\otimes E_8$ into $\L$ can be found in
Appendix of \cite{glee8}. Thus,
\[
U\cong \sigma^{-1} U \subset V_{A_2\otimes E_8}^+ \subset V_{\Lambda}^+
\]
as desired.
\end{proof}

\begin{rem}\labttr{qinleech}
One can also obtain an embedding of $Q\cong A_2\otimes E_8$ into $\Lambda$ as
follows: Let $h\in O(\L)$ such that $h$ has order $3$ and trace $0$. The fixed
point sublattice of $h$ in $\L$ is isometric to $R\cong \sqrt{3}E_8$ and the
annihilator of $R$ in $\L$ is
\[
 ann_\L(R)\cong Q=A_2\otimes E_8.
\]
Recall that $N_{O(\L)}(h)\cong Sym_3\times 2\cdot Alt_9$ in this case \cite{Atlas}.
\end{rem}

\begin{rem}\labttr{dihmm'}
Since $\rho$ is conjugate to $\eta$ in $Aut(V_{E_8^3})$, it is clear that the
subVOA $\la e_M, \rho e_M\ra \cong \la e_M, \eta e_M\ra$.
Note also that
$\eta e_M\in V_{M'}$ is a \cvcch supported at
$M'$. Thus, we may study the
properties of the dihedral group $\la \tau_e, \tau_f\ra$ in $Aut(V_\L^+)$ or
$Aut(V^\natural)$ by examining the configuration $(M , M')$ in $\L$.
\end{rem}

\section{Overlattices and gluing}

The goal
is to discuss overlattices for $Q\perp R$ which are isometric to
$\L$, the Leech lattice.  We explain how $Q\perp R$ is contained in
a copy of $E_8^3$ and $\L$ in such a way that the common stabilizer
is a group $2{\cdot }Alt_9$ and triality of groups of type $D_4$ is
involved.

Our argument uses triality to prove existence of a Leech lattice and
explain the occurrence of the group $2{\cdot }Alt_9$ as the
stabilizer of a relevant gluing map. We shall give an easy proof
that  $2{\cdot}Alt_9$ occurs in a gluing based on existence of a
Leech lattice in the appendix.

\medskip

We discuss the following situation.

\begin{nota}\labttr{orbitsgluemaps}
We fix an orthogonal direct sum of integral lattices, $Q\perp R$.
Suppose that an index $m$ is given and that we are to study the set
$\frak X :=\{L\mid Q\perp R\le L \le \dual Q \perp \dual R,
|L:Q\perp R|=m, L\cap \QQ\otimes Q = Q, L \cap \QQ \otimes R = R
\}$. We wish to understand the orbits of $O(Q)\times O(R)$ on $\frak
X$.  Let $\frak Y :=\{ L \in \frak X \mid L \text{ is integral }\}$.
\end{nota}

\begin{nota}\labttr{triples} We define
$$\frak T:=\{ (A, B, \psi ) \mid A \text{ is a subgroup of order $m$ in $\dg Q$},$$
$$B \text{ is a subgroup of order $m$ in } \dg R, $$
$$\psi \text{ is an isomorphism of
$A$ to $B$ } \}.$$
\end{nota}

\begin{prop}\labtt{x1} (i) $\frak X$ is in bijection with the set of triples $\frak T$.

(ii) $L$ is integral if and only if $\{ (a, \psia ) \mid a \in A \}$
is a totally singular subspace of the quadratic space $\dg Q \perp
\dg R$ with natural $\QQ/\ZZ$-valued bilinear  form.

(iii) The totally singular condition holds if and only if for all
$a\in A$,
$(a,a)+(\psia, \psia )=0 \in \QQ/\ZZ$.  In particular,
there exists a scalar so that $\psi$ is a scaled isometry.
\end{prop}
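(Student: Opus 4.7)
The plan is to prove (i), (ii), (iii) in sequence, using the two projections $\pi_Q:Q^*\perp R^*\to\dg Q$ and $\pi_R:Q^*\perp R^*\to\dg R$ for (i), the $\QQ/\ZZ$-valued pairing on the discriminant group for (ii), and a polarization identity for (iii).

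For (i), I first verify that the induced maps $\overline{\pi_Q},\overline{\pi_R}:L/(Q\perp R)\to\dg Q,\dg R$ are injective. Indeed if $(x,y)\in L$ with $x\in Q$, then $(x,0)\in Q\perp R\subset L$, so $(0,y)=(x,y)-(x,0)\in L\cap(\QQ\otimes R)=R$, forcing $y\in R$; symmetrically for $\overline{\pi_R}$. Hence the images $A:=\overline{\pi_Q}(L/(Q\perp R))$ and $B:=\overline{\pi_R}(L/(Q\perp R))$ each have order $m$, and $\psi:=\overline{\pi_R}\circ\overline{\pi_Q}^{-1}:A\to B$ is an isomorphism. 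Conversely a triple $(A,B,\psi)\in\frak T$ determines
\[
L(\psi):=\{(x,y)\in Q^*\perp R^*\mid x+Q\in A,\ \psi(x+Q)=y+R\}\in\frak X,
\]
and the two assignments are mutually inverse by inspection.

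For (ii), the duality pairing of $Q^*\perp R^*$ with $Q\perp R$ shows that $(v,w)\bmod\ZZ$ depends only on the cosets of $v,w\in L$ modulo $Q\perp R$, so $L$ is integral if and only if the induced $\QQ/\ZZ$-valued bilinear form vanishes on $L/(Q\perp R)\subset \dg Q\perp \dg R$. Under the bijection of (i), $L/(Q\perp R)$ is identified with the graph $\Gamma_\psi:=\{(a,\psi a)\mid a\in A\}$, so integrality becomes exactly total singularity of $\Gamma_\psi$.

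For (iii), total singularity reads $(a,a')+(\psi a,\psi a')=0$ in $\QQ/\ZZ$ for all $a,a'\in A$, which specializes to the stated diagonal condition at $a=a'$. The converse is the one nontrivial step: polarization of the diagonal identity at $a+a'$ yields only $2\bigl[(a,a')+(\psi a,\psi a')\bigr]=0$ in $\QQ/\ZZ$, and cancelling the $2$ requires an absence of $2$-torsion. In the present $3C$-setting, $\dg Q$ and $\dg R$ are $3$-groups, so the relevant values lie in an odd-order subgroup of $\QQ/\ZZ$ in which $2$ is a unit, and the cancellation is legitimate. The resulting bilinear identity $(\psi a,\psi a')=-(a,a')$ then exhibits $\psi$ as a scaled isometry with scalar $-1$, yielding the final assertion. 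The halving step in (iii) is the one real obstacle; the rest is bookkeeping with the two projections and the duality pairing.
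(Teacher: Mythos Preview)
The paper states this proposition without proof, treating it as a standard observation about overlattices and glue maps. Your argument is a correct and careful fleshing-out of all three parts.

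Your treatment of (iii) is in fact more scrupulous than the paper's bare statement: you correctly observe that polarization of the diagonal identity yields only $2[(a,a')+(\psi a,\psi a')]=0$ in $\QQ/\ZZ$, so that passing to the bilinear identity requires the discriminant groups to have no $2$-torsion. In the paper's intended setting $Q\cong A_2\otimes E_8$ and $R\cong\sqrt{3}\,E_8$, both discriminant groups are elementary abelian $3$-groups, and your cancellation is legitimate; the resulting identity $(\psi a,\psi a')=-(a,a')$ exhibits $\psi$ as a similitude with scale factor $-1$, matching the paper's subsequent use. In full generality (arbitrary integral $Q,R$ as in Notation~\ref{orbitsgluemaps}) the equivalence in (iii) can fail when $2$-torsion is present, so your explicit restriction to the $3C$ case at that step is the honest way to close the argument.
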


Special case: the spaces $\dg Q$ and $\dg R$ have a scaled isometry,
e.g. $Q=A_2 \otimes E_8$ and $R=EEE_8$.

\begin{de}\labttr{actiononmaps}  A group action is assumed to be on the left.
Suppose that the group $G$ acts on the set $A$ and the group $H$
acts on the set $B$. We have an action of $G\times H$ on $Maps(A,B)$
as follows.  If $f$ is a map, then $(g,h)\cdot f$ is the map which
takes
$a$ to $h(f(g^{-1}a))$. 
\end{de}

\begin{de}\labttr{similitude}
A similitude is a linear map between quadratic spaces which is a
scaled isometry.  The set of self-similitudes of a quadratic space
is a group which contains the orthogonal group as a normal subgroup.
\end{de}

Now let $G_Q$ be the group of similitudes on $\dg Q$ and $G_R$ the
group of similitudes on $\dg R$. Let $Z=Q$ or $R$. For $g$ in one of
these groups $G_Z$, define $\l (g)$ to be the scaling factor, i.e.,
the nonzero scalar such that $\l (g)\cdot (x,y)=(gx, gy)$ for all
$x, y\in \dg Z$.

The above definition gives an action of $G_Q \times G_R$ on $\frak
X$. The subgroup $G_{Q,R}:=\{ (g,g')\in G_Q\times G_R \mid \l (g)=\l
(g') \}$ is the stabilizer in $G_Q\times G_R$ of the condition
$(a,a)+(\psia, \psia )=0 \in \QQ/\ZZ$ in \refpp{x1}(ii) and of the
set $\frak Y$.

\section{The subgroup $2^2{\cdot}O^+(8,2)$ of $2^2{\cdot }O^+(8,3)$}

\def\weyleh{{Weyl({E_8})}}

The structure of $2{\cdot}O^+(8,2)\cong Weyl(E_8)$ is well known.
It embeds in $O^+(8,3)$ as a subgroup generated by reflections.  One
gets such an embedding by taking the $E_8$ lattice modulo 3 with the
associated quadratic form.

The group $O^+(8,3)$ has the property that its second derived group
has index 8, is a perfect central extension of $\Omega ^+(8,3)$ and
gives the quotient $Dih_8$.

Its order is therefore $2^{15}3^{12}5{\cdot 7}{\cdot 13}$.  It
contains $\weyleh$ with index $2{\cdot 3^7}13$.

We need a few standard facts. For all $q$, the group $\Omega^+(8,q)$
has a group of graph automorphisms isomorphic to $Sym_3$.  This
group acts faithfully on the Schur multiplier when this is
isomorphic to $2\times 2$, i.e., for $q=2$ and $q$ odd.

\begin{lem}\labtt{invollift}
Let $F$ be a field of characteristic not 2 and $n
\ge 2$.  An involution in $SO(n,F)$ lifts to an element of order 2 or 4 in $Spin(n,F)$.  It lifts to an element of order 4 if and only if the multiplicity of $-1$ in its spectrum on the natural $n$-dimensional module is $2(mod \, 4)$.
\end{lem}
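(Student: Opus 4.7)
The plan is to work inside the Clifford algebra $C(V,Q)$ of the natural orthogonal module $V\cong F^n$. Let $g\in SO(n,F)$ be an involution with $(-1)$-eigenspace $U$, and set $2k:=\dim U$; the dimension is even because $\det g=1$. Since $\mathrm{char}\,F\ne 2$, the restriction of $Q$ to $U$ is diagonalizable, so I fix an orthogonal basis $v_1,\dots,v_{2k}$ of $U$ with $Q(v_i)\ne 0$. The element $w:=v_1 v_2\cdots v_{2k}$ lies in the even Clifford group $\Gamma^0$, and under the natural projection $\Gamma^0\twoheadrightarrow SO(n,F)$ it maps to the product of reflections $r_{v_1}\cdots r_{v_{2k}}=(-I_U)\oplus I_{U^\perp}=g$.

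The heart of the argument is the computation of $w^2$. Using $v_iv_j=-v_jv_i$ for $i\ne j$ together with $v_i^2=Q(v_i)$, I would count the transpositions needed to rewrite $v_1\cdots v_{2k}\cdot v_1\cdots v_{2k}$ as $v_1^2 v_2^2\cdots v_{2k}^2$: moving the second copy of $v_j$ leftward past $v_{j+1},\dots,v_{2k}$ costs $2k-j$ transpositions, for a total sign of $(-1)^{(2k-1)+(2k-2)+\cdots+1}=(-1)^{k(2k-1)}=(-1)^k$. Hence
\[
w^2 = (-1)^k\prod_{i=1}^{2k} Q(v_i).
\]
To pass from $w$ to a genuine lift in $Spin(n,F)$, recall that the spinor norm of $w$ equals $\prod Q(v_i)$ modulo squares; since $g$ is assumed to lift, this product is a square $c^2$ with $c\in F^\times$, and then $\tilde g:=c^{-1}w$ lies in $Spin(n,F)$ and projects to $g$. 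Consequently
\[
\tilde g^2 = c^{-2}w^2 = (-1)^k,
\]
so $\tilde g$ has order $2$ when $k$ is even and order $4$ when $k$ is odd.

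To finish, observe that $\dim U=2k$ satisfies $2k\equiv 2\pmod 4$ precisely when $k$ is odd, which converts the parity statement into the stated multiplicity criterion. I expect the main (though essentially routine) obstacle to be the sign calculation $w^2=(-1)^k\prod Q(v_i)$; once that is in hand everything else is formal. A minor care-point: the two possible lifts $\pm\tilde g$ share the same square, so the order is intrinsic to $g$ and independent of the chosen orthogonal basis and of the square root $c$.
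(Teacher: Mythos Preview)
Your argument is correct. The Clifford-algebra computation is the standard one: writing a lift of $g$ as $c^{-1}v_1\cdots v_{2k}$ and reducing $w^2$ by anticommutation gives $\tilde g^{\,2}=(-1)^k$, and the two lifts $\pm\tilde g$ share this square, so the order depends only on $g$. Your reading of the hypothesis is also right: the lemma is a conditional (``if a lift exists, its order is $2$ or $4$''), and the spinor-norm condition $\prod Q(v_i)\in (F^\times)^2$ is exactly what makes $c$ available.

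As for comparison with the paper: the paper does not actually prove this lemma. It records it as ``a standard fact'' and cites \cite{grelab} for a proof. Your write-up supplies precisely the explicit Clifford-algebra verification that the citation points to, so there is nothing to contrast --- you have filled in what the paper chose to outsource.

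One small stylistic point: where you write ``since $g$ is assumed to lift,'' it would read more cleanly to say ``since we are computing the order of a lift, we may assume the spinor norm $\prod Q(v_i)$ lies in $(F^\times)^2$,'' so that the logical structure (the lemma is vacuous for $g$ outside the image of $Spin$) is explicit rather than implicit.
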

\pf  This is a standard fact.  A proof may be found in \cite{grelab}. \eop

We have $X:=Weyl(E_8)'/\{\pm 1\}\cong \Omega ^+(8,2)$.  There are
three conjugacy classes of maximal parabolic subgroups with Levi
factors of type $A_3$.  Let $P_i$ be representatives, $i=1,2,3$. For
each $i$, $P_i$ lifts in the covering group $\widehat X$ to a group
$Q_i$ of the shape $(2\times 2_+^{1+6})GL(4,2)$.  In a quotient of
$\widehat X$ by a group of order 2, two of these $Q_i/Z$ are
isomorphic to $2_+^{1+6}GL(4,2)$ and the other is isomorphic to
$2^7{:}GL(4,2)$.

\subsubsection{Creating double covers of $Sym_9$ in $W$ with triality}

\begin{prop}\labtt{graphd4}
Let $X_1 < X_2$ be a containment of perfect groups isomorphic to
$2^2{\cdot}\Omega^+(8,2)$ and $Spin^+(8,3)$ respectively.

There exists a subgroup $\Sigma\cong Sym_3$ of $Aut(X_2)$ which
complements $Inn(X_2)$ and such that $\Sigma$ stabilizes $X_1$.
\end{prop}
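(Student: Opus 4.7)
The plan is to realize $X_2 = Spin^+(8,3)$ as the simply-connected Chevalley group of type $D_4$ over $\mathbb{F}_3$ and to take $\Sigma$ to be the natural $Sym_3$ of diagram automorphisms, then show it stabilizes $X_1$ after an inner adjustment. First I would make the embedding $X_1 \hookrightarrow X_2$ explicit via reduction of the $E_8$ lattice modulo $3$: $W(E_8)$ preserves the quadratic form on $E_8/3E_8$, which is a non-degenerate plus-type form on $\mathbb{F}_3^8$, so $W(E_8)/\{\pm 1\} \cong \Omega^+(8,2)$ embeds in $\Omega^+(8,3)$. Using Lemma \ref{invollift} to track lifts of involutions promotes this to an embedding of the Schur cover $X_1 = 2^2\cdot\Omega^+(8,2)$ into $X_2 = Spin^+(8,3)$, and under this embedding $Z(X_1)$ is identified with $Z(X_2)$, both isomorphic to $2\times 2$.

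Next, by the recalled fact about graph automorphisms, the diagram automorphisms of $D_4$ give a subgroup $\Sigma \cong Sym_3 < Aut(X_2)$ complementing $Inn(X_2)$, and $\Sigma$ acts faithfully on $Z(X_2)$. The main step is to show that, after a suitable inner adjustment in $X_2$, this $\Sigma$ stabilizes $X_1$. For this I would use the three $8$-dimensional irreducible $\mathbb{F}_3$-modules of $X_2$, namely the natural module $V$ and the two half-spin modules $S^+, S^-$: they are cyclically permuted by $\Sigma$ and have distinct central characters on $Z(X_2)$. Restricting to $X_1$, the module $V$ is $E_8/3E_8$ while the $S^\pm|_{X_1}$ are also irreducible $8$-dimensional $\mathbb{F}_3$-modules of $X_1$ whose central characters on $Z(X_1)=Z(X_2)$ exhaust the three nontrivial characters of $2\times 2$. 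On the other hand, the outer triality of $\Omega^+(8,2)$ likewise acts faithfully on the Schur multiplier of $\Omega^+(8,2)$ and cyclically permutes three analogous $X_1$-irreducibles; in fact, it permutes the three classes of subgroups $Q_i$ described earlier. Since there is essentially one faithful $Sym_3$-action on $2\times 2$, these two $Sym_3$-actions on the common center must agree, so each element of $\Sigma$ sends $X_1$ to an $X_2$-conjugate of $X_1$ realizing the same triality-twisted restriction pattern, and an inner adjustment yields a replacement $\Sigma$ that literally normalizes $X_1$.

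The main obstacle is the module-theoretic matching: one must verify that the half-spin restrictions $S^\pm|_{X_1}$ really are the two genuine spin-type $8$-dimensional modules of $X_1$ (and not, say, twisted copies of $V|_{X_1}$ nor direct sums). This is a character computation on $Z(X_1)$ plus a few conveniently chosen elements of $X_1$, accessible from the $E_8$-lattice picture via the explicit description of $X_1 \hookrightarrow Spin^+(8,3)$ using Clifford-algebra techniques; once done, the existence of a triality-complement $\Sigma$ stabilizing $X_1$ follows at once, and the cyclic permutation of the three parabolic classes $Q_i$ by $\Sigma$ is a built-in consistency check on the construction.
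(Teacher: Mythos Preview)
Your overall strategy matches the paper's: take a triality automorphism of $X_2$, show it carries $X_1$ to an $X_2$-conjugate of $X_1$, then correct by an inner automorphism. The difference is in how the conjugacy step is handled, and this is where your proposal has a genuine gap.

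You argue that because the $\Sigma$-action on $Z(X_2)$ and the triality action on the Schur multiplier of $\Omega^+(8,2)$ are both faithful $Sym_3$-actions on $2\times 2$, they ``must agree'', and therefore $\sigma(X_1)$ is $X_2$-conjugate to $X_1$. But matching the center actions, or even identifying $S^{\pm}|_{X_1}$ with the half-spin modules of $X_1$, does not by itself yield conjugacy of the subgroups. What you actually need is that any subgroup of $\Omega^+(8,3)$ arising as the image of $X_1$ under an $8$-dimensional representation with order-$2$ kernel is conjugate in $O^+(8,3)$ to $\rho(X_1)$. That uniqueness-of-embedding statement is the real content, and your module identification is orthogonal to it: since the three $8$-dimensional $\FF_3$-modules of $X_1$ are already permuted by $Aut(X_1)$, they all give abstractly isomorphic images in $\Omega^+(8,3)$, so knowing \emph{which} module restricts tells you nothing about conjugacy inside $X_2$.

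The paper establishes exactly this uniqueness, concretely. Given any representation $\sigma$ of $X_1$ on $\FF_3^8$ with order-$2$ kernel, it singles out a maximal parabolic $P$ (Levi type $A_3$) whose image under $\sigma$ is monomial of shape $2^7{:}Alt_8$; such a monomial subgroup is conjugate in $\rho(X_2)$ to $\rho(Q)$ for a standard parabolic $Q$. Then it uses that $\rho(\langle X_1,r\rangle)\cong Weyl(E_8)$ is generated by this monomial group together with one explicit product $r_1r_2$ of reflections, whose possible choices form a single orbit under the monomial group. This pins down $\sigma(X_1)$ up to $\rho(X_2\langle r\rangle)$-conjugacy, and applying it to $\sigma=\rho h$ produces the required $g$ with $i_gh$ normalizing $X_1$. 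Your proposal skips this work entirely; the ``main obstacle'' you flag is not the obstacle, and the step you pass over in one clause (``an inner adjustment yields a replacement $\Sigma$'') is where the argument lives.
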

\pf Let $r$ be an element in $Aut(X_2)$ corresponding to a
reflection in a representation $\rho$ of $X_2$ on its natural
quadratic space $V:=\FF_3^8$. We assume that $r$ normalizes $\rho
(X_1)$ and so $\la \rho  (X_1), r \ra \cong Weyl(E_8)$. We extend
$\rho $ to a representation of the semidirect product $X_2 \la r
\ra$.

Let $h$ be an automorphism of order a power of 3 which is outer and
is inverted by $r$ under conjugation.

We consider an arbitrary representation $\s$ of $X_1$ on the
quadratic space   $V$ such that the kernel of $\s$ has order 2.

It has the property that exactly one of the three conjugacy classes
of maximal  parabolic subgroups of $X_1$ with Levi factor of type
$A_3$ acts by $\rho$ as a monomial group $2^7{:}Alt_8$ (we use the
term {\it parabolic} for a subgroup of $X_1$ if it contains $Z(X_1)$
and maps modulo $Z(X_1)$ to a parabolic of the group of Lie type
$X_1/Z(X_1)$). Let $P$ be such a maximal parabolic.

Then, $\s (P)$  can be conjugated by an element of $\rho (X_2)$ to
$\rho (Q)$, where $Q$ is a parabolic subgroup of $X_1$ such that
$\rho (Q)$ acts monomially with respect to some basis, say $\mathcal
A$ of $V$. We  may assume that $r$ is chosen to normalize $Q$. Our
hypotheses imply that $\rho ( \la Q, r \ra )$ is a uniquely
determined index 2 subgroup of the full orthogonal monomial group on
$\mathcal A$.

The group $\rho (\la X_1, r \ra)$ is generated by $\rho  ( \la Q, r
\ra )$ together with a product $r_1r_2$ of commuting reflections,
one of which, say $r_1$, is a reflection at $\pm b \pm b'$, for some
$b, b' \in \mathcal A$. The other reflection, $r_2$, may be taken as
reflection at some element $s$ of the quadratic space which has the
property that for all $b\in \mathcal A$, $(s, b)\in \{ -1, 1\} (mod
\, 3)$.   It is clear that any two such  $s$ are in the same orbit
under the monomial group on $\mathcal A$.

We apply above remarks to the composition $\s = \rho h$.  It follows
that there exists $g\in X_2\la r \ra$ so that $\rho (g) \rho ( h
(X_1))\rho (g)^{-1}=\rho (X_1)$. Let $i_g\in Aut(X_2)$ be
conjugation by $g$. It follows that $i_gh \in Aut(X_2)$ takes $X_1$
to itself and induces a group of order 3 on $Z(X_1)=Z(X_2)$. This
proves the result since $\la \{ i_k\mid k \in X_1 \}, r, i_gh \ra
\cong Aut(X_2)$. \eop

\begin{prop}\labtt{2alt9}
We use the notation of \refpp{graphd4} and its proof. Let $X$ be a
subgroup of $X_1$ so that $\rho (X) \cong Alt_9$. Let $\a\in \Sigma$
so that $\a$ does lie in the group $Inn(X_2)\la r \ra$. Then  $\rho
(\a (X)) \cong 2{\cdot }Alt_9$, the covering group of $Alt_9$.
\end{prop}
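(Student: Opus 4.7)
The plan is to pin down $X$ as a specific $2{\cdot}Alt_9$ subgroup of $X_1$ whose central involution is precisely $\ker(\rho|_{X_1})$, and then read off $\rho(\alpha(X))$ by tracking how the triality $\alpha$ moves this central involution.

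First I would identify $X$. Write $Z(X_2)\cong \ZZ_2\times \ZZ_2$ and let $z_0$ be the non-identity element of $\ker(\rho|_{X_2})$; then $\rho|_{X_1}$ has kernel $\langle z_0\rangle$ and image $\rho(X_1)\cong 2{\cdot}\Omega^+(8,2)=Weyl(E_8)'$. A natural $Alt_9$ in $\rho(X_1)$ is $Weyl(A_8)'$, coming from the $A_8$ sublattice of $E_8$; since the longest element $-I$ of $Weyl(E_8)$ is not in $Weyl(A_8)$, this $Alt_9$ meets the central $\langle -I\rangle$ of $Weyl(E_8)'$ trivially and is a genuine subgroup of $\rho(X_1)$. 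Any $X\le X_1$ with $\rho(X)\cong Alt_9$ must then be the full $\rho$-preimage of such an $Alt_9$, and so is an extension of $Alt_9$ by $\langle z_0\rangle$. To see the preimage is the \emph{non-split} double cover, I would apply Lemma~\ref{invollift} to the involution of cycle type $2^2 1^5$ in $Alt_9$: on the standard $8$-dimensional vector representation its $(-1)$-eigenspace has dimension $2\equiv 2\pmod 4$, so it lifts to an element of order $4$ in $Spin^+(8,3)$. This forces $X\cong 2{\cdot}Alt_9$ with $Z(X)=\langle z_0\rangle$.

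Next I would invoke triality. Because $\alpha\in\Sigma$ lies outside $Inn(X_2)\langle r\rangle$ (the stabilizer in $\Sigma\cong Sym_3$ of the vector representation class), it induces a nontrivial permutation on the three non-identity elements of $Z(X_2)$, so $\alpha(z_0)=z$ for some central involution $z\ne z_0$. The image $\alpha(X)\le X_1$ is abstractly isomorphic to $X\cong 2{\cdot}Alt_9$, but now with center $\alpha(Z(X))=\langle z\rangle\ne\langle z_0\rangle$. Any element of $\alpha(X)\cap\ker\rho=\alpha(X)\cap\langle z_0\rangle$ is central in $\alpha(X)$, hence lies in $\langle z\rangle$; as $z\ne z_0$ this intersection is trivial. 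Therefore $\rho|_{\alpha(X)}$ is injective and $\rho(\alpha(X))\cong \alpha(X)\cong 2{\cdot}Alt_9$.

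The main obstacle is the first step: ruling out the split extension $Alt_9\times \ZZ_2$ and confirming that the preimage is $2{\cdot}Alt_9$. Lemma~\ref{invollift} disposes of this once the correct involution class and $8$-dimensional representation are pinpointed; the rest is bookkeeping on how $\alpha$ permutes central elements.
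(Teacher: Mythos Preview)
Your proposal is correct and follows essentially the same line as the paper's (very terse) proof: both arguments hinge on the fact that $\alpha$ moves the central involution $z_0=\ker(\rho|_{X_2})$ to a different element of $Z(X_2)$, so that $\rho$ is injective on $\alpha(X)$, together with Lemma~\ref{invollift} to identify the resulting group as the nonsplit cover $2{\cdot}Alt_9$. Your write-up is more explicit than the paper's---you pin down the $2^21^5$ involution and its $(-1)$-eigenspace dimension on $E_8\otimes\FF_3$, and you spell out why $\alpha(X)\cap\langle z_0\rangle=1$---but one small reordering is needed: the claim that any $X$ with $\rho(X)\cong Alt_9$ must be the \emph{full} $\rho$-preimage is not yet justified at the point you assert it; it follows only \emph{after} you have shown (via Lemma~\ref{invollift}) that the preimage is nonsplit, since that is what rules out a genuine $Alt_9$ complement inside $X_1$.
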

\pf The hypotheses on $\a$ imply that $\a$ does not stabilize the
subgroup $Ker(\rho )\cap X\cong 2$. Therefore, the image of $\a (X)$
in $\rho (X_2)$ is isomorphic to $2{\cdot }Alt_9$ \refpp{invollift}.
\eop

\begin{nota}\labttr{a8e8mod3}
Let $L=E_8$ and $\Phi$ the root system.  Let $\Phi_0$ be a sub root system of type $A_8$, $\Phi_0 \subset \Phi$.
Let $W$ be the Weyl group of $\Phi$ and let $W_0$ be the Weyl group of $\Phi_0$. Then $W_0\cong Sym_9$ and its action on
$L/3L$ has constituents of dimensions 1 and 7.  There are submodules of these
dimensions and
each is nonsingular.
\end{nota}

It is straightforward to check the last two statements above with a standard model of the relevant root lattices.

\begin{nota}\labttr{a8e8mod3nota}
Let $q$ be the  reflection at the nonsingular 1-dimensional module
described in \refpp{a8e8mod3}. We therefore have the subgroup $\pi
(W_0) \times \la q \ra \cong 2 \times Sym_9$ of $O(\dg Q) \times
O(\dg R)$. Its commutator subgroup is isomorphic to $Alt_9$ and the
commutator quotient is $2\times 2$. The procedure of \refpp{graphd4}
and \refpp{2alt9} gives a subgroup $K_0$ of $G_0 \cong Spin^+(8,3)$
so that $K:=K_0'$ satisfies $K\cong 2{\cdot }Alt_9$ and $K_0/K\cong
2$. Thus, $K_0$ is a covering group of $Sym_9$ (there are two such
covering groups, depending on whether a transposition is represented
by an element of order 2 or 4).
\end{nota}

\begin{lem}\labtt{maxa9weyle8}
(i)
The group $Z(W) \times W_0$ is maximal in $W$.

(ii)
The group $Z(W) \times W_0'$ is maximal in $W'$.
\end{lem}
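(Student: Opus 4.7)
The plan is to pass to the central quotient $W/Z$ and reduce both parts to the ATLAS fact that $Alt_9$ is a maximal subgroup of $\Omega^+(8,2)$, then derive (i) from (ii) by an index comparison. Set $Z:=Z(W)=\{\pm 1\}$, so that $W/Z\cong O^+(8,2)$ and $W'/Z\cong \Omega^+(8,2)$ is simple. Three preliminary facts will be needed: (a) $W_0\cap Z=1$, because $W_0\cong Sym_9$ acts on $A_8\subset \RR^9$ by coordinate permutations, giving trace $\ge -1$ on $A_8$, while $-I$ has trace $-8$; (b) $Z\subset W'$, because $-I$ is the longest element of $W$, a product of $120$ reflections and so of sign $+1$; (c) $W_0\cap W'=W_0'$, because a transposition in $W_0$ is a reflection in $W$, of sign $-1$. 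Together these give $(Z\times W_0)\cap W'=Z\times W_0'$, and both products are genuine direct products.

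For part (ii), suppose $Z\times W_0'\lneq H\le W'$. Since $Z\subset H$, the image $H/Z\le W'/Z\cong\Omega^+(8,2)$ strictly contains $(Z\times W_0')/Z\cong Alt_9$. The maximality of $Alt_9$ in $\Omega^+(8,2)$, recorded in the ATLAS \cite{Atlas}, then forces $H/Z=\Omega^+(8,2)$, whence $H=W'$.

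For part (i), suppose $Z\times W_0\lneq H\le W$, and examine $H\cap W'$, which contains $(Z\times W_0)\cap W'=Z\times W_0'$. If equality held, the second-isomorphism bound $[H:H\cap W']\le[W:W']=2$ would give $|H|\le 2|Z\times W_0'|=|Z\times W_0|$, contradicting $H\supsetneq Z\times W_0$. Hence the containment is strict, and applying (ii) to $H\cap W'$ as a subgroup of $W'$ yields $H\cap W'=W'$, i.e., $W'\subseteq H$. Since $W_0\not\subseteq W'$ by (c), $H$ meets both cosets of $W'$ in $W$, and as $[W:W']=2$ this forces $H=W$.

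The main obstacle is the ATLAS citation for maximality of $Alt_9$ in $\Omega^+(8,2)$; a self-contained proof would essentially require establishing primitivity of the natural $960$-point action of $\Omega^+(8,2)$, while everything else above is elementary index bookkeeping.
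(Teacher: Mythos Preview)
Your argument is correct, and the reduction of both parts to the single ATLAS fact that $Alt_9$ is maximal in $\Omega^+(8,2)$ is clean. The preliminary facts (a)--(c) and the index bookkeeping are all fine.

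The paper's route is genuinely different. For (i) it simply invokes the classification of root systems: $A_8$ is a maximal subroot system of $E_8$ (Borel--de Siebenthal), and since $Z(W)\times W_0$ contains reflections, one can argue via reflection subgroups. For (ii) the paper explicitly notes that the absence of reflections in $Z(W)\times W_0'$ makes the argument harder, and instead of quoting the ATLAS it uses triality to replace $Z(W)\times W_0'$ by the conjugate subgroup $K\cong 2{\cdot}Alt_9$ and then runs a hands-on stabilizer computation: the stabilizer of a root in $K$ is $SL(2,8){:}3$, which is shown separately (Lemma~\ref{maxl283sp62}) to be maximal in $Sp(6,2)$, and an orbit-counting comparison finishes. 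So the paper proves (i) first and (ii) independently, while you prove (ii) first and deduce (i) from it.

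What each buys: your approach is short and transparent, but treats the $Alt_9\le\Omega^+(8,2)$ maximality as a black box. The paper's approach is more self-contained --- the $SL(2,8){:}3$-in-$Sp(6,2)$ lemma is proved directly by elementary Sylow and permutation-group arguments --- and it also establishes the auxiliary transitivity results on roots (Lemma~\ref{htransroots}) that are used elsewhere. If you want your proof to stand alone, you would need to supply the ATLAS maximality, which, as you note, amounts to proving primitivity of a degree-$960$ action.
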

\pf
(i) This follows from the classification of root systems.

(ii) Since $Z(W) \times W_0'$ does not contain reflections, this is
more difficult. By use of $Aut(W/Z(W))$, we see that the proof is
equivalent to proving that $K$ is maximal in $W'$, where $K\cong
2{\cdot }Alt_9$ is the group created in Proposition \ref{2alt9}.

We let $\a $ be a root and $X:=Stab_W(\a)\cong 2 \times Sp(6,2)$, a group of order $2^{10}3^45{\cdot }7$.

Suppose that there is a subgroup $S$ so that $K< S <W'$.  Define
$T:=Stab_S(\a), T_0:=Stab_{K}(\a)\cong SL(2,8){:}3$. We have
$|S:K|=|T:T_0|$. By \refpp{maxl283sp62}, $TZ(X)=T_0Z(X)$ or $X$.
Since $T_0$ and $X$ are generated by their odd order elements and
$Z(X)$ is a 2-group, $T=(T\cap Z(X))\times (T\cap X')$.  The left
factor has order 1 or 2 and the right factor is $T_0$ or $X'$.

If $T\cap X'=T_0$, either $K=S$, which is impossible, or $|S:K|=2$, which would mean that
$K$ is normal in $S$.  But this would mean that
$W_0'$ is contained in $N_W(W_0')$ with index divisible by 4.  This is clearly impossible since $W_0$ is self-normalizing in $W$.

We conclude that $T\cap X'=X'$.   This means that
$S$ has index 1 or 2 in $W'$, which is a perfect group.  Therefore $S=W'$, a contradiction.
\eop

\begin{nota}\labttr{h}
 We define the group $H$ to be a natural $2{\cdot }Alt_8$ subgroup of
$K$ where $K\cong 2{\cdot} Alt_9$ is the group defined in Proposition
\ref{2alt9}.
\end{nota}

\begin{lem}\labtt{htransroots}
The group $H$ acts transitively on roots.  A stabilizer has the form
$2^3{:}7{:}3$. For the action of $K$ on roots, a stabilizer has the
form $SL(2,8){:}3$.
\end{lem}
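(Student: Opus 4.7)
My plan is to first reduce the transitivity claim to one about the action on the $120$ root-pairs $\{\pm\alpha\}$. The central involution of $K$, which equals $-1 \in Z(W)$ since $K \le W'$, sends $\alpha \mapsto -\alpha$ and so is never in the stabilizer of a single root. Hence $\mathrm{Stab}_K(\alpha)$ has trivial intersection with $Z(K)$ and projects isomorphically onto its image in $K/Z(K) \cong Alt_9$, which is the stabilizer of the root-pair $\{\pm\alpha\}$. The same reduction applies to $H$, with $H/Z(H) \cong Alt_8$. Transitivity on the $240$ roots is therefore equivalent to transitivity on the $120$ root-pairs; and orbit-stabilizer, using $|K| = 362880$ and $|H| = 40320$, forces (once transitivity is established) the predicted orders $1512 = |SL(2,8){:}3|$ and $168 = |2^3{:}(7{:}3)|$.

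For transitivity I invoke the triality construction of $K$ from Proposition \ref{2alt9}. The group $\Omega^+(8,2) = W/Z(W)$ admits a triality $Sym_3$ of outer automorphisms permuting three natural $120$-element sets: the $120$ nonsingular vectors in $\mathbb{F}_2^8$, which correspond bijectively to the root-pairs of $E_8$, and the two families of maximal totally singular subspaces of $\mathbb{F}_2^8$. The standard image $W_0 = W(A_8) \cong Sym_9$ in $\Omega^+(8,2)$ has orbit lengths $36 + 84$ on the nonsingular vectors ($A_8$-roots versus the rest), and the same for its subgroup $W_0' \cong Alt_9$. However, $K/Z(K) \cong Alt_9$ is obtained from $W_0'$ via the outer triality automorphism employed in Proposition \ref{2alt9}, so its action on the nonsingular vectors is conjugate under triality to the action of the standard $W_0'$ on one of the two $120$-sets of maximal totally singular subspaces. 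I then show $Alt_9$ is transitive on that latter $120$-set by exhibiting $SL(2,8){:}3 \cong P\Gamma L(2,8)$ simultaneously as a maximal subgroup of $Alt_9$ of index $120$ and as an embedded subgroup of $\Omega^+(8,2)$ stabilizing a maximal totally singular subspace of the appropriate type — the embedding realized via its $2$-transitive action on the $9$ projective points over $\mathbb{F}_8$, which affords a natural $8$-dimensional $\mathbb{F}_2$-module. Orbit-stabilizer then forces the $Alt_9$-orbit to exhaust all $120$ subspaces, establishing transitivity and identifying $\mathrm{Stab}_K(\alpha) \cong SL(2,8){:}3$.

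For $H = 2\cdot Alt_8 \le K$ the subgroup $H/Z(H) \cong Alt_8$ is the stabilizer in $Alt_9$ of a ninth point of the natural $9$-point action. Since $SL(2,8){:}3$ is $2$-transitive on those same $9$ points, the intersection $SL(2,8){:}3 \cap Alt_8$ is the one-point stabilizer in $SL(2,8){:}3$ — namely the Borel subgroup of shape $2^3{:}(7{:}3)$ of order $168$. This yields transitivity of $H$ on the $240$ roots with $\mathrm{Stab}_H(\alpha) \cong 2^3{:}7{:}3$, completing the proof. The main obstacle is to confirm the triality correspondence underlying Proposition \ref{2alt9}: that the triality automorphism genuinely sends the natural $120$-set of root-pairs (where $Alt_9$ is intransitive with orbit pattern $36 + 84$) onto one of the $Alt_9$-transitive $120$-sets of maximal totally singular subspaces. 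This is best handled either by direct computation with a concrete triality element in $Aut(\Omega^+(8,2))$, or by appealing to the classification of maximal subgroups of $\Omega^+(8,2)$, in which $SL(2,8){:}3$ appears as a subspace stabilizer.
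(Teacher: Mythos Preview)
Your overall strategy---exploit the triality construction of $K$ from Proposition~\ref{2alt9} to transport the question of transitivity on root-pairs to an action of the \emph{untwisted} $Alt_9$ on a triality-related $120$-set---is sound in spirit and is genuinely different from the paper's proof. The paper instead works inside the Barnes--Wall model $E_8\cong BW_3$: it bounds the order of $\mathrm{Stab}_H(\alpha)$ from below by $|H|/240$, then uses Sylow $7$-theory and a transfer argument to rule out any larger intersection with the frame stabilizer $2^3.2^3.GL(3,2)$, concluding transitivity and the shape $2^3{:}7{:}3$ directly. For $K$, the paper observes that the root-stabilizer is a triply transitive group of degree $9$ and quotes Suzuki's classification to identify it as $SL(2,8){:}3$. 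Your deduction of the $H$-stabilizer from the $K$-stabilizer via $P\Gamma L(2,8)\cap Alt_8 = 2^3{:}7{:}3$ is cleaner than the paper's separate analysis of $H$.

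However, your identification of the triality-permuted $120$-sets is wrong. In $\Omega^+(8,2)$ the two families of maximal totally singular subspaces each have $135$ elements, not $120$; these (together with the $135$ singular points) are the objects permuted by the parabolic triality. The three $120$-sets that triality actually permutes are the nonsingular points in the natural module and in the two half-spin modules---equivalently, the three conjugacy classes of $Sp(6,2)$ subgroups. So your assertion that $SL(2,8){:}3$ ``appears as a subspace stabilizer'' cannot be right as stated, and the $8$-dimensional permutation module you invoke does not by itself place $P\Gamma L(2,8)$ inside a stabilizer for one of the spin $120$-sets.

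The argument is salvageable: replace ``maximal totally singular subspace'' by ``nonsingular point in a half-spin module'' (equivalently, a non-natural $Sp(6,2)$), and then verify that the copy of $P\Gamma L(2,8)$ inside the standard $Alt_9\le\Omega^+(8,2)$ actually lies in such an $Sp(6,2)$. Since $P\Gamma L(2,8)$ is maximal in $Alt_9$ and $Alt_9\not\le Sp(6,2)$ (index $8$ is below the minimal permutation degree of $Sp(6,2)$), this containment would force the $Alt_9$-orbit to have size exactly $120$. But that containment $P\Gamma L(2,8)\le Sp(6,2)$ of the correct (non-natural) type still needs justification; it is not supplied by the permutation-module remark you give.
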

\pf We start with the Barnes-Wall viewpoint for $E_8\cong BW_3$.
Consider a standard frame $F$ of minimal vectors. In the BRW group,
$G$, $Stab_G(F)\cong 2^{1+6}2^3.GL(3,2)$ and for $\a \in F$,
$J:=Stab_G(\a )$ has the form $2^3.2^3.GL(3,2)$.

We may replace $H$ by a conjugate to assume that its intersection
with $J$ contains  a group of the form $7{:}3$. The intersection has
order bounded below by $8!/240=2^3.3.7$. If the intersection were
larger, it would have order of the form $2^a3{\cdot }7$, for some
$a\ge 3$.  By Sylow 7-theory, $a$ is divisible by 3, whence $a=6$.
Thus, the intersection would contain a maximal subgroup of a Sylow
2-group $P$ of $H$ which meets $Z(H)$ trivially.  This is impossible
by group transfer theory (since $Z(H) \le H'$ implies $Z(H)\le P'$).
It follows that the stabilizer order is exactly $8!/240=2^3.3.7$.
Transitivity follows. Finally, we argue that a stabilizer, $S$, has
the form $2^3{:}7{:}3$.  Since $S$ is contained in a group of the
form $2^3.2^3.GL(3,2)$, if the statement is false, $S\cong GL(3,2)$.

In the stabilizer of a root, there is up to conjugacy just one Sylow
7-subgroup and up to conjugacy just two subgroups of the form
$2^3{:}7$, because the action of a group of order 7 on $O_2(J)$ is
completely reducible with two non-isomorphic (and dual)
constituents.  Each constituent has order $2^3$.  One constituent is
represented by $O_2(G)\cap J$.  The second constituent is
represented by a subgroup of the group $A$ of permutation matrices
in $G$, which is isomorphic to $AGL(3,2)$.  It is clear that
$O_2(A)$ fixes the root $(\half ^8)$, in the standard $E_8$
notation.  It follows that $S$ has the form $2^3{:}7{:}3$, rather
than $GL(3,2)$.

Now let $T$ be the stabilizer of $\a$ in $K$.  Then $|Y:S|=9$. Thus,
$T$ is a triply transitive group of degree 9.  By a classification
\cite{suzuki2closed}, $T\cong SL(2,8){:}3$. \eop

\section{ Some properties of $Q\perp R$}

\def\oweh{|Weyl({E_8})|}
\def\ooht{|O^+(8,3)|}
\def\oetwo{O^+(8,2)}
\def\oethree{O^+(8,3)}

\begin{lem}\labtt{minveca2e8}
The minimal vectors in $Y:=A_2\otimes E_8$ have norm 4 and are
expressed as the union of the three sets $\a \otimes \Psi$, where
$\a$ runs over three pairwise nonproportional vectors of the
$A_2$-factor and $\Psi$ is the set of roots for the second factor.

(i) These three sets are maximal sets of pairwise doubly even sets
(i.e. $(x, y)\in 2\ZZ$ for all $x, y$ in the set) of minimal
vectors;

(ii)  A doubly even set of minimal vectors of cardinality at least
240 equals one of these sets. In particular, a doubly even set of
minimal vectors which meets every coset of $3Y$ in $Y$ which
contains a minimal vector is one of the above sets.

(iii) These sets are permuted by the isometry group of the lattice.
We have $O(Y)=U\times T$, where $T$ acts on  each $span_{\ZZ}(\a
\otimes \Psi)$ as its full isometry group, isomorphic to $O(E_8)$,
and where $U\cong Sym_3$ permutes the three sets $\a \otimes \Psi$.
We may take three nonproportional vectors $\a_1, \a_2, \a_3$ whose
sum is 0 and choose  identifications $U\cong Sym_3$ and
$Y=A_2\otimes E_8$ so that the permutation $p$ corresponds to the
isometry $p(\a_i \otimes x)=\a_{p(i)}\otimes x$, for all $x \in
E_8$.
\end{lem}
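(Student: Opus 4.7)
The plan is to establish (i)--(iii) in order; the combinatorial bound in (ii) is the main obstacle.  First I identify all minimum vectors of $Y := A_2 \otimes E_8$.  Choosing a basis $e_1, e_2$ of $A_2$ with $(e_i, e_i) = 2$ and $(e_1, e_2) = -1$, a general vector writes as $v = e_1 \otimes u + e_2 \otimes w$ with $u, w \in E_8$, and $(v, v) = 2[(u,u) - (u,w) + (w,w)]$.  Setting $(v,v) = 4$ and using that $E_8$ is even with minimum norm $2$, a short case check (Cauchy--Schwarz excludes $(u,u) \ge 4$ or $(w,w) \ge 4$, and $(u,u) = (w,w) = 2$ forces $(u,w) = 2$, hence $u = w$) shows that every minimum vector has the form $\alpha \otimes r$ with $\alpha$ an $A_2$-root and $r \in \Psi$.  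Grouping the six $A_2$-roots into three antipodal pairs $\pm \alpha_i$ presents the $720$ minimum vectors as the disjoint union of three sets $\alpha_i \otimes \Psi$ of size $240$.

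For (i), I use that $(\alpha_i \otimes x, \alpha_j \otimes y) = (\alpha_i, \alpha_j)(x, y)$.  When $i = j$ this is $2(x, y) \in 2\ZZ$, so each $\alpha_i \otimes \Psi$ is doubly even.  When $i \ne j$ it equals $\pm(x, y)$, even only if $x = \pm y$ or $x \perp y$.  Since $\Psi$ spans $E_8$, no single root $y$ is orthogonal to every root in $\Psi$, so no minimum vector $\alpha_j \otimes y$ with $j \ne i$ can be adjoined to $\alpha_i \otimes \Psi$ without destroying double-evenness, proving maximality.

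The substantive step is (ii).  Let $S$ be doubly even with $|S| \ge 240$, and write $S_i = S \cap (\alpha_i \otimes \Psi)$.  Passing to $S_i \cup (-S_i)$ preserves double-evenness and only enlarges $|S|$, so I may assume each $S_i$ is symmetric; then $|S_i| = 2|\bar T_i|$ for some $\bar T_i \subset \bar\Psi$, the $120$ antipodal pairs of roots of $E_8$.  The cross-constraint from (i) reads: for $i \ne j$ and $\bar x \in \bar T_i$, $\bar y \in \bar T_j$ with $\bar x \ne \bar y$, one needs $x \perp y$.  Setting $A = \bar T_i \cap \bar T_j$, $B = \bar T_i \setminus A$, $C = \bar T_j \setminus A$, this forces $A$ to be pairwise orthogonal (so $|A| \le 8$), $B, C \subset A^\perp \cap \bar\Psi$, and $B \perp C$.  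A finite check of orthogonal splittings of the root subsystem $A^\perp \cap \Psi$---the decisive case $|A| = 0$ admits at most the splitting $E_7 \perp A_1$ of $E_8$ with $63 + 1 = 64$ pairs---bounds $|\bar T_i| + |\bar T_j| = 2|A| + |B| + |C| \le 64$ whenever both $\bar T_i, \bar T_j$ are nonempty.  Summing over the three pairs then forces $\sum_i |\bar T_i| \le 96$ if all three are nonzero and $\le 64$ if exactly two are; in either case $|S| < 240$.  Hence only one $\bar T_i$ is nonempty, and $|S| \ge 240$ then forces $\bar T_i = \bar\Psi$ and $S = \alpha_i \otimes \Psi$.

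For (iii), (ii) furnishes a homomorphism $O(Y) \to Sym_3$ permuting the three sets.  Its kernel consists of isometries stabilizing each $\alpha_i \otimes E_8$; any such restricts to some $\phi_i \in O(E_8)$, and the relation $\alpha_1 + \alpha_2 + \alpha_3 = 0$ (after a sign choice) forces $\phi_1 = \phi_2 = \phi_3$, giving kernel $T = 1 \otimes O(E_8) \cong O(E_8)$.  A splitting $U := W(A_2) \otimes 1 \cong Sym_3$ is supplied by the Weyl group of $A_2$ acting on the first tensor factor.  Since $U$ and $T$ act on disjoint tensor factors they commute, and $U \cap T = \{1\}$ (a nontrivial Weyl transformation of $A_2$ moves $\alpha_1 \otimes x$ out of $\alpha_1 \otimes E_8$ for nonzero $x$), so $O(Y) = U \times T$ as claimed.
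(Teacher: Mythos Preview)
Your argument is essentially correct and takes a genuinely different route from the paper in part (ii).  The paper works in the quadratic space $E_8/2E_8$: assuming none of $P,Q,R$ equals $\Psi$ and (say) $|P|\ge 80$, it notes that $P$ meets at least $40$ nonsingular cosets of $2E_8$, hence spans an $\FF_2$-subspace of dimension $d\ge 6$; the condition $(P,Q)\subset 2\ZZ$ then confines $Q$ (and likewise $R$) to the $(8-d)$-dimensional annihilator, forcing $|Q|,|R|\le 6$ and contradicting $|S|\ge 240$.  Your approach instead bounds $|\bar T_i|+|\bar T_j|$ directly via the combinatorics of mutually orthogonal root subsets of $E_8$, trading the mod-$2$ linear algebra for a short case check on orthogonal root subsystems.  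Both reach the same conclusion; yours is perhaps more elementary, while the paper's is more uniform.

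Two small slips should be cleaned up.  First, your uniform bound $|\bar T_i|+|\bar T_j|\le 64$ fails when $B=\emptyset$ (equivalently $\bar T_i\subset\bar T_j$): taking $|A|=1$ and $C$ equal to the full $E_7$-system of $63$ pairs orthogonal to $A$ yields $|\bar T_i|+|\bar T_j|=1+64=65$.  This does no harm to the conclusion---$2\cdot 65=130$ and $\tfrac32\cdot 65<120$ are still far below $240$---but your ``orthogonal splitting'' language implicitly assumes $B,C$ both nonempty, so either handle $B=\emptyset$ separately or state the bound as $65$.  Second, in (iii) your justification of $U\cap T=\{1\}$ is not quite right: the reflection $s_1\in W(A_2)$ sends $\alpha_1\mapsto -\alpha_1$ and hence \emph{does} preserve $\alpha_1\otimes E_8$.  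What you need (and what is true) is that no nontrivial $w\in W(A_2)$ preserves \emph{all three} sublattices $\alpha_i\otimes E_8$ simultaneously, i.e.\ the map $U\to Sym_3$ on the three sets is injective.
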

\pf (i) Let $\Delta$ be the set of roots of the first factor.

Choose a single minimal vector, say $\a \otimes \g$.  The set of
norm 4 vectors which have even inner product with it is $E:= (\a
\otimes \Psi) \cup (\Delta \otimes \g) $.  The set of elements of
$E$ which have even inner product with every element of $E$ is just
$\a \otimes \Psi$ and any $\b \otimes \g$ has odd inner product with
at least one member of $\a \otimes \Psi$. If follows that $\a
\otimes \Psi$ is a doubly even set, maximal under containment.

(ii) The second statement follows from the first, which we now
prove.

Suppose $S$ is a doubly even set of minimal vectors with $|S|\ge
240$.    Let $S$ be the union of sets $\a \otimes P, \b\otimes Q, \g
\otimes R$, where $\a, \b , \g$ are pairwise nonproportional vectors
in $\Delta$. We want to prove that $S$ is one of these.  Suppose
that this is not so.  Then none of $P, Q, R$ equals $\Psi$ and at
least one of them, say $P$,  has cardinality at least $240/3=80$,
which means that $P$ represents at least 40 nonsingular cosets of
$E_8$ mod 2.   Therefore, the span of $P+2E_8$ has dimension $d\ge
6$.   Since $\b \otimes Q$ has even inner product with $\a \otimes
P$, $(P, Q)\le 2\ZZ$.  Therefore $Q$ represents nonsingular cosets
in the annihilator space of the above  span of $P+2E_8$.  This
annihilator space has dimension $8-d$, so $|Q|\le 6$.  Similarly,
$|R|\le 6$.   If $Q\ne \emptyset$, then $|P|\le 126$ and so $|P\cup
Q\cup R|\le 126+6+6< 240$,  a contradiction to $|S|\geq 240$.  We
conclude that $Q=R=\emptyset$.

(iii) This follows from the characterization of (ii).  The obvious
map $O(Y)\rightarrow Sym_3 \times O(E_8)$ is an isomorphism of
groups. \eop

\begin{lem}\labtt{trivialmodp}
Suppose that $A$ is a free abelian group and that $n>1$ so that the
finite order automorphism $g\ne 1$ acts trivially on $A/nA$.   Then
$n=2$, $g$ has order 2 and $A$ is the direct sum of $A^+:=\{a\in
A\mid ga=a \}$ and $A^-:=\{a\in A\mid ga=-a \}$.
\end{lem}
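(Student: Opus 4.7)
The plan is to pin down $n$ and the order of $g$ via an eigenvalue/determinant argument on cyclotomic isotypic components of $A$, and then recover the direct sum decomposition by the standard averaging trick.

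\medskip

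First, pass to the $\QQ$-vector space $V := A \otimes_{\ZZ} \QQ$. Since $g$ has finite order, its minimal polynomial over $\QQ$ is a product of distinct cyclotomic polynomials, $\mu_g(x) = \prod_{d \in S} \Phi_d(x)$ for some finite set $S$ of positive integers. Write $V = \bigoplus_{d \in S} V_d$ where $V_d$ is the kernel of $\Phi_d(g)$ on $V$, and set $A_d := A \cap V_d$. Each $A_d$ is a pure sublattice of $A$ with $A_d \otimes \QQ = V_d$; purity gives $A_d \cap nA = nA_d$, so the hypothesis restricts to $(g-1)|_{A_d} \colon A_d \to nA_d$.

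\medskip

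Next, compare two expressions for $|\det((g-1)|_{A_d})|$. Since this map factors through $nA_d \subseteq A_d$, the determinant is divisible by $n^{\rank(A_d)} = n^{k_d\phi(d)}$, where $k_d := \rank(A_d)/\phi(d) \ge 1$. On the other hand, the characteristic polynomial of $g|_{V_d}$ is $\Phi_d(x)^{k_d}$, so $|\det((g-1)|_{A_d})| = \Phi_d(1)^{k_d}$. Extracting $k_d$-th roots yields $n^{\phi(d)} \mid \Phi_d(1)$ for every $d \in S$ with $d > 1$. Now invoke the classical evaluation that $\Phi_d(1) = p$ when $d = p^a$ is a prime power and $\Phi_d(1) = 1$ otherwise: the non-prime-power case forces $n = 1$, contradicting $n > 1$; the case $d = p^a$ with $\phi(d) \ge 2$ forces the prime $p$ to be a nontrivial $\phi(d)$-th power, which is impossible. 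So the only $d > 1$ that can occur is $d = 2$, which gives $n \mid 2$, hence $n = 2$; and $g \ne 1$ forces $2 \in S$, so $\mu_g(x)$ divides $(x-1)(x+1)$ and $g$ has order $2$.

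\medskip

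Finally, for the decomposition, given $a \in A$ write $ga - a = 2d$ with $d \in A$. Using $g^2 = 1$ one checks $gd = -d$ and $g(a + d) = a + d$, so $a = (a + d) + (-d)$ exhibits $a$ as a sum of an element of $A^+$ and an element of $A^-$. Torsion-freeness of $A$ forces $A^+ \cap A^- = 0$, since an element in the intersection satisfies $2a = 0$. The main obstacle will be the sublattice/determinant bookkeeping on each cyclotomic component --- in particular the purity of $A_d$ and the identification of $|\det((g-1)|_{A_d})|$ with the specialization of the characteristic polynomial at $1$ --- but once those are in place the evaluation of $\Phi_d(1)$ closes the argument cleanly.
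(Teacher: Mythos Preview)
Your proof is correct. Both your argument and the paper's hinge on the same arithmetic fact --- that $\Phi_d(1)$ equals $p$ when $d$ is a power of the prime $p$ and equals $1$ otherwise --- but the executions differ. The paper first reduces to the case where $g$ has prime-power order $p^a$, views the $\Phi_{p^a}$-isotypic summand $B$ as a module over $\ZZ[\zeta_{p^a}]$, and invokes the ramification identity $(1-\zeta)^{\phi(p^a)}\sim p$ to conclude that $n$ is a power of $p$ and $\phi(p^a)=1$; it then uses the eigenspace decomposition $A=A^+\oplus A^-$ a second time to squeeze $n$ down from a $2$-power to $2$. Your route is more elementary and more uniform: you treat all cyclotomic components $A_d$ simultaneously via the determinant of $(g-1)|_{A_d}$, obtaining $n^{\phi(d)}\mid \Phi_d(1)$ directly, which forces $d=2$ and $n=2$ in one stroke with no algebraic-number-theoretic input beyond the value of $\Phi_d(1)$. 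The final decomposition step --- writing $a=(a+d)+(-d)$ with $2d=ga-a$ --- is exactly the paper's argument with $E:=\tfrac{1}{2}(g-1)$ unpacked.
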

\pf Suppose that $g$ has order $p^a>2$ for a prime number $p$ and
integer $a\ge 1$. There exists a direct summand $B$ of $A$ so that
on $B$, the minimum polynomial of $g$ is the cyclotomic polynomial
$\Phi_{p^a}$ of degree $p^a-p^{a-1}$. In the ring of integers, $\ZZ
[{\root {p^a} \of 1}]$, if $\pi$ is a primitive $p^a$-th root of 1,
then $\pi^{p^a-p^{a-1}}$ generates the ideal $p\ZZ [{\root {p^a} \of
1}]$ \cite{weiss}.

It follows that if $g$, of arbitrary finite order greater than 1,
acts trivially on $B/nB$, then $n$ is a power of $p$ and
$p^a-p^{a-1}=1$, whence $p^a=2$.

We therefore may assume that $g$ has order 2 and  $n=2^f$, for some
$f\ge 1$. In this case $g$ acts trivially on $A/2A$.  If we prove
that the decomposition $A=A^+\oplus A^-$ holds, then $f\le 1$
follows (since $A^-\ne 0$).  We may therefore assume that $n=2$.

There exists an endomorphism $E$ of $A$ so that $g=1+2E$.  Then
$1=g^2=1+4(E+E^2)$, whence $E(E+1)=0$ in $End(A)$. For $a \in A$, we
have $g(Ea)=(1+2E)Ea=(E+2E^2)a=-Ea$ and
$g(E+1)a=(1+2E)(E+1)a=(2E^2+3E+1)a=(E+1)a$, so $a=(E+1)a-Ea\in
A^++A^-$. \eop

\begin{lem}\labtt{stab0}
Suppose that $L$ is an overlattice of $Q\perp R$ such that $L\cap
\QQ Q=Q$, $L \cap \QQ R=R$ and $L$ is stable under $O_3(O(Q))$.
Write $O(Q)=X\times Y$, where $X\cong Sym_3$ and $Y \cong O(E_8)$
\refpp{minveca2e8}. Then $Stab_{X\times Z(Y)}(L)\cong Sym_3$ and
$C_{X\times Y}(Stab_{X\times Z(Y)}(L))=Y$, the subgroup of $X\times
Y$ which fixes each of the sets $\a \otimes \Phi$.
\end{lem}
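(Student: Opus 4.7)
The plan is to translate the stabilizer problem via Proposition \ref{x1} into a question about how $X\times Z(Y)$ acts on the discriminant group $\dg Q$, and then read off the answer from the fact that this action is by scalars on a $3$-torsion module.

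First I identify $\dg Q$. Because $E_8$ is unimodular and free,
\[
\dg Q \;\cong\; (A_2^\ast/A_2)\otimes_{\ZZ} E_8 \;\cong\; E_8/3E_8,
\]
an $8$-dimensional $\FF_3$-space. Next I determine how $X\times Z(Y)$ acts. By Lemma \ref{minveca2e8}, the factor $X\cong Sym_3$ acts on $Q$ through the $A_2$-tensorand as the Weyl group $W(A_2)$; since a reflection $r_\a$ sends $v\in A_2^\ast$ to $v-(v,\a)\a\equiv v\pmod{A_2}$, every Weyl group acts trivially on the discriminant of its root lattice, and so $X$ acts trivially on $\dg Q$. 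The center $Z(Y)=\{\pm 1\}$ acts on $Q$ through the $E_8$-tensorand as $\pm 1$, hence on the $\FF_3$-module $\dg Q$ by the scalars $\pm 1$, and $-1$ is nontrivial since $\dg Q$ is $3$-torsion.

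By Proposition \ref{x1}, $L$ is specified by a triple $(A,B,\psi)$ with $A\le \dg Q$, and an element $(g,1)\in O(Q)\times O(R)$ stabilizes $L$ precisely when $g$ fixes $A$ pointwise (equivalently, $\psi\circ g|_A=\psi$). For $(x,\vep)\in X\times Z(Y)$ the induced action on $A$ is multiplication by the scalar $\vep$, and fixing $A$ pointwise forces $\vep=1$ whenever $A\ne 0$, which holds for any $L$ properly containing $Q\perp R$. Hence
\[
Stab_{X\times Z(Y)}(L)\;=\;X\times\{1\}\;\cong\; Sym_3,
\]
and the centralizer then falls out of the direct-product structure:
\[
C_{X\times Y}(X\times\{1\})\;=\;Z(X)\times Y\;=\;Y,
\]
using $Z(Sym_3)=1$. (The hypothesis that $L$ is stable under $O_3(O(Q))$ is in fact automatic once one observes that this $O_3$ lies in $X$ and so acts trivially on $\dg Q$.)

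The only nontrivial ingredient is the triviality of the $W(A_2)$-action on $A_2^\ast/A_2$, which is the one-line reflection computation above; everything else is bookkeeping with the gluing correspondence of Proposition \ref{x1}, so I do not expect a real obstacle.
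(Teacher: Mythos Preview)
Your framework via Proposition~\ref{x1} and the identification $\dg Q\cong (A_2^*/A_2)\otimes_{\ZZ} E_8$ are fine, but the key claim that $X$ acts trivially on $\dg Q$ is wrong. The $Sym_3$-factor $X$ specified in Lemma~\ref{minveca2e8}(iii) is \emph{not} the Weyl group $W(A_2)$: it is the subgroup of $O(A_2)\cong Dih_{12}$ that literally permutes three roots $\a_1,\a_2,\a_3$ with $\a_1+\a_2+\a_3=0$. The transposition in $X$ that fixes $\a_3$ has $\a_3$ in its $(+1)$-eigenspace, whereas the root reflection $r_{\a_3}$ negates $\a_3$; hence that transposition equals $-r_{\a_3}$, not $r_{\a_3}$. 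Your one-line reflection computation correctly shows $r_{\a_3}$ is trivial on $A_2^*/A_2\cong\ZZ/3$, but $-1$ is not, so the involutions of $X$ act as $-1$ on $\dg Q$. Thus $X$ acts on $\dg Q$ through the sign character, and the stabilizer you seek is the twisted diagonal $\{(x,\mathrm{sgn}(x)):x\in X\}\le X\times Z(Y)$, not $X\times\{1\}$.

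It happens that this diagonal is still isomorphic to $Sym_3$ and still has centralizer $\{1\}\times Y$ in $X\times Y$, so the lemma's conclusions survive; but your explicit identification of the stabilizer is incorrect. The paper's proof avoids this trap by never computing the scalar: since $\dg Q$ is absolutely irreducible as a $Y$-module, any element of $X$ (which commutes with $Y$) acts on $\dg Q$ as some scalar $c\in\{\pm 1\}$; one then pairs an involution $t\in X$ with the $z\in Z(Y)$ realizing the same scalar, so that $tz$ fixes $\dg Q$ pointwise, giving $Stab_{X\times Z(Y)}(L)=O_3(X)\la tz\ra\cong Sym_3$ regardless of whether $c=1$ or $c=-1$.
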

\pf Let $h$ generate $O_3(Q)$.  Since $h$ acts trivially on $R$, $h$
acts trivially on $Proj_Q(L)/Q$, which means $(h-1)Proj_Q(L)\le Q$.
Since $O(Q)=X\times Y$, where $X\cong Sym_3$ and $Y \cong O(E_8)$
\refpp{minveca2e8}, the fact that $\dg Q$ is an absolutely
irreducible module for $Y$ means that elements of $X$ act as scalars
on $\dg R$. If an involution $t$ of $X\setminus Z(X)$, acts as the
scalar $c\in \{\pm 1\}$ on $\dg R$, there exists $z\in Z(Y)$ which
acts on $\dg Q$ as $c$. We have $Stab_{X\times Z(Y)}(L)=O_3(X\times
Z(Y))\la tz\ra \cong Sym_3$.  The last statement is clear. \eop

\begin{prop}\labtt{stab1}
Suppose that $L$ is a Niemeier lattice and that
$\th \in O(L)$ has order 3 and satisfies
$L^+ (\th) \cong R$ and $L_+(\th )
\cong Q$.
Then the stabilizer in $O(L)$ of the gluing map for $L$ over $Q \perp R$
is $N_{O(L)}(\la \th \ra)$.
\end{prop}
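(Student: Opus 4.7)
The plan is to prove both inclusions. The easy direction, $N_{O(L)}(\la\th\ra)\subseteq\mathrm{Stab}$, is immediate: if $g$ normalizes $\la\th\ra$ then $g\th g^{-1}\in\{\th,\th^{-1}\}$, and since both $\th$ and $\th^{-1}$ have fixed sublattice $R$ and annihilator $Q$, we obtain $g(R)=R$ and $g(Q)=Q$. Hence $g$ preserves the decomposition $Q\perp R\subset L$, and so also the gluing triple $(A,B,\psi)$.

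For the reverse, let $g\in O(L)$ satisfy $g(Q)=Q$ and $g(R)=R$, and set $\th':=g\th g^{-1}$. From $g(R)=R$ and $\th|_R=1$ we get $\th'|_R=1$. Since $\th'$ is conjugate to $\th$ in $O(L)$, it has the same eigenvalue multiplicities on $\QQ L$, forcing $L^+(\th')=R$ and $L_+(\th')=Q$; in particular $\th'|_Q$ is an order-$3$ isometry of $Q$ with no fixed vector. The key observation is that the mere hypothesis $\th\in O(L)$ forces the induced action $\bar\th$ on $\dg Q=Q^*/Q$ to be trivial. Indeed, since $L$ is unimodular with $|\dg Q|=|\dg R|=3^8$, the index $|L/(Q\perp R)|=3^8$ is maximal, the gluing subgroups $A,B$ are the full discriminant groups, and $\psi:\dg Q\to\dg R$ is an isomorphism. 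Combining $\th|_R=1$ with the requirement that $\th$ preserve the graph of $\psi$ inside $\dg Q\perp\dg R$ yields $\psi(\bar\th\, a)=\psi(a)$ for all $a\in\dg Q$, hence $\bar\th=1$. The same argument applied to $\th'$ gives $\th'|_Q\in K:=\ker(O(Q)\to O(\dg Q))$.

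The main obstacle is identifying $K$ together with its order-$3$ elements without fixed vector. By Lemma \ref{minveca2e8}, $O(Q)=W(A_2)\times O(E_8)$. A direct check on the generator $(1-\o)/3$ of $A_2^*/A_2$ shows each $A_2$-reflection acts trivially on $\dg{A_2}$; since $W(A_2)$ is generated by reflections, it acts trivially on $\dg{A_2}$, and hence on $\dg Q=\dg{A_2}\otimes_\ZZ E_8\cong E_8/3E_8$. On the $O(E_8)$ factor, the induced map to $O(E_8/3E_8)$ is injective by Lemma \ref{trivialmodp}. So $K=W(A_2)\times\{1\}\cong \mathrm{Sym}_3$, whose only order-$3$ elements are the two non-trivial rotations of $A_2$, both without fixed vector on $Q$. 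These must coincide with $\th|_Q$ and $\th|_Q^{-1}$, so $\th'|_Q\in\{\th|_Q,\th|_Q^{-1}\}$. Combined with $\th'|_R=1=\th|_R$, we conclude $\th'=\th^{\pm 1}$ on $Q\perp R$, and hence on $L$, whence $g\in N_{O(L)}(\la\th\ra)$.
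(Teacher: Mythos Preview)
Your proof is correct, and it takes a genuinely different route from the paper's. The paper argues via the geometry of minimal vectors: it uses the partition of the $720$ norm-$4$ vectors of $Q$ into three $EE_8$-spanning sets $F_1,F_2,F_3$ (Lemma~\ref{minveca2e8}), invokes Lemma~\ref{stab0} to produce a $Sym_3$ inside the stabilizer $S$ that acts as the full symmetric group on $\{F_1,F_2,F_3\}$, and then factors $S=\text{Stab}_{X\times Z(Y)}(L)\cdot T$ with $T$ fixing each $F_i$; Lemma~\ref{stab0} forces $T$ into the centralizer of this $Sym_3$, and the conclusion follows. Your argument instead short-circuits this by computing the kernel $K=\ker\bigl(O(Q)\to O(\dg Q)\bigr)$ directly: the $W(A_2)$ factor dies because the Weyl group of any root system acts trivially on (weight lattice)$/$(root lattice), and the $O(E_8)$ factor injects by Lemma~\ref{trivialmodp} with $n=3$. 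Since $K\cong Sym_3$ contains exactly two order-$3$ elements, any conjugate $\th'=g\th g^{-1}$ (which also lands in $K$ by the same gluing-graph argument) must restrict to $\th|_Q^{\pm 1}$.

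What each buys: the paper's approach stays close to the lattice geometry and feeds naturally into the surrounding discussion of the three $EE_8$ sublattices. Your approach is more algebraic and arguably cleaner---the essential point, that the ambient isometries acting trivially on the discriminant form precisely the small $Sym_3$ factor, is isolated in one line. Both rest on the same decomposition $O(Q)\cong Sym_3\times O(E_8)$ from Lemma~\ref{minveca2e8}, and the key observation that $\th$ acts trivially on $\dg Q$ (via the gluing and $\th|_R=1$) appears in both, though the paper tucks it inside the proof of Lemma~\ref{stab0}.
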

\pf
Note that both $L^+ (\th)$ and $L_+ (\th)$ are direct summands of $L$.
Let $\a$ be the gluing map and $S$ its stabilizer in $O(L)$, i.e.,
$\{g\in O(L) \mid g(Q)=Q, g(L)=L, g\circ \a = \a\}$.
If $g\in N_{O(L)}(\la \th \ra)$, clearly $g$ fixes both $L^+(\th )$
and its annihilator $L_+(\th )$.
Since it fixes $L$ and commutes with projections, it fixes the gluing map.

Now, we prove that $S\le N_{O(L)}(\la \th \ra)$. Since $S$ acts on
$Q$, it permutes the set $F$ of norm 4 vectors.  There is a
partition of $F$ into three sets $F_i$, $i=1, 2, 3$ so that $Q_i$,
the $\ZZ$-span of $F_i$,  is an $EE_8$ lattice in which $F_i$ is the
set of minimal vectors. It follows from \refpp{minveca2e8} that $S$
permutes these three sets.

We now refer to the notation of \refpp{stab0}.  Since
$Stab_{X\times Z(Y)}(L)$ acts on
$\{F_1, F_2, F_3\}$ as $Sym_3$, $S=Stab_{X\times Z(Y)}(L)T$ where $T$ is the subgroup
of $S$ which normalizes each of $Q_1, Q_2, Q_3$.  By \refpp{stab0}, $T\le C(Stab_{X\times Z(Y)}(L))$.
It follows that $S\le N_{O(L)}(\la \th \ra)$.
\eop

\section{Overlattices of $Q\perp R$}

We continue to use the notations $Q, R$ (Notation \ref{QandR}.  This
section will explain which $L$ may arise in \refpp{stab1}.

\begin{lem}\labtt{qrine8e8e8}
There exist embeddings of  $Q \perp R$ in $E_8^3$.  In fact, there are at least two kinds of embeddings.

(i) (3-cycle type)  there exist embeddings such that $R$ is the fixed point sublattice of an automorphism of order 3 which permutes the three direct summands cyclically; and

(ii)  (1+2 type)  there exist embeddings such that $\QQ R \cap E_8^3$ is an orthogonal direct summand of $E_8^3$.
\end{lem}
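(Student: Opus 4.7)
The plan is to exhibit each embedding directly via a naturally occurring order-$3$ isometry.

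For part (i), I would use the cyclic permutation $\sigma \in O(E_8^3)$ defined by $\sigma(a,b,c) = (c,a,b)$. Its fixed point sublattice is the diagonal $\Delta = \{(a,a,a) : a \in E_8\}$, and the $E_8^3$ inner product restricts to $\langle (a,a,a),(b,b,b)\rangle = 3\langle a,b\rangle$, so $\Delta \cong \sqrt{3}\,E_8 = R$. Its orthogonal complement in $\mathbb{Q}\otimes E_8^3$, intersected with $E_8^3$, is $\{(a,b,c) \in E_8^3 : a+b+c=0\}$, which is the standard realization of $A_2\otimes E_8 = Q$. Since the two live in orthogonal rational subspaces, this gives the $3$-cycle embedding $Q\perp R \hookrightarrow E_8^3$ with $R = (E_8^3)^\sigma$.

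For part (ii), I want $R$ to lie inside a single $E_8$ factor. Take $h\in O(E_8)$ a fixed point free isometry of order $3$ (one exists by Notation \ref{hinE8}). Since $h$ has no eigenvalue $1$, its minimal polynomial on $E_8$ is $t^2+t+1$, so $h+h^{-1}=-1$. A short calculation
\[
\langle (1-h)x,(1-h)y\rangle = 2\langle x,y\rangle - \langle (h+h^{-1})x,y\rangle = 3\langle x,y\rangle
\]
shows that $x \mapsto (1-h)x$ is a similitude of scale $\sqrt{3}$; hence $(1-h)E_8$ is a sublattice of $E_8$ isometric to $R$. Place this inside the third coordinate of $E_8^3$, and simultaneously embed $Q = M+M' \subset E_8 \perp E_8$ (as in Lemma \ref{lem:2.3}) into the first two coordinates. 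The two pieces sit in orthogonal coordinate subspaces, so $Q\perp R \hookrightarrow E_8^3$, and $\mathbb{Q}R \cap E_8^3 = 0\perp 0 \perp E_8$ is visibly an orthogonal direct summand.

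There is no serious obstacle: the only calculation to watch is the similitude identity, and it follows at once from $h^2+h+1=0$. The two constructions are genuinely inequivalent, since in (i) the saturation $\mathbb{Q}R \cap E_8^3 = \Delta$ is \emph{not} an orthogonal direct summand of $E_8^3$ (one checks that $\Delta + Q$ has index $3^8$ in $E_8^3$), while in (ii) it is a coordinate factor by construction. The key conceptual point is simply to choose the right order-$3$ isometry for each case: the coordinate $3$-cycle for (i) and a fixed point free $h$ acting within a single $E_8$ factor for (ii).
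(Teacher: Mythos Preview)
Your proposal is correct and follows essentially the same approach as the paper: for (i) you use the diagonal fixed by the $3$-cycle, and for (ii) you place $(1-h)E_8\cong R$ in one coordinate and $Q=M+M'$ in the other two, with the norm-tripling identity $\langle (1-h)x,(1-h)y\rangle=3\langle x,y\rangle$ serving as the key computation in both arguments. The only cosmetic difference is that the paper re-derives the identification $M+M'\cong A_2\otimes E_8$ inside the proof via rootlessness and the $EE_8$-pair classification, whereas you simply invoke Lemma~\ref{lem:2.3}; your added remark distinguishing the two embedding types by whether $\QQ R\cap E_8^3$ is a direct summand is a nice touch not made explicit in the paper.
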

\pf (i) is trivial.  Compare Appendix \ref{QRinN}. (ii) Let $A, B,
C$ be the three indecomposable summands of a lattice $L$ isometric
to $E_8^3$. Fix an isometry $\phi : A\rightarrow B$. We take
faithful actions of $O(L)$ on $A, B, C$ for which $\phi$ is an
invariant map.

There is an automorphism $h$ of order 3 of $E_8$ which does not have eigenvalue 1.  Then, the endomorphism $h-1$ triples norms.

So, $(h-1)C\cong R$. Now, define a lattice $J\le A\perp B$ by
$J:=(h-1)A+(h-1)B+K$, where $K:=\{ a+\phi (a) \mid a \in A \}\cong
EE_8$. Then $J$ is the sum of $K$ and $K':=\{ a+\phi (ha) \mid a \in
A \}\cong EE_8$. It is easy to prove that $J$ is rootless  (since
any element of $A+B$ of the form $a+b$, where $a\ne 0, b\ne 0$ has
norm at least 4).  Since $K\cap K'=0$, the classification
\cite{glee8} identifies $J$ as isometric to $A_2\otimes E_8$. \eop

\begin{nota}\labttr{gluealpha}
We fix an overlattice of $Q\perp R$ which is isometric to $E_8^3$ and is of type (i) in \refpp{qrine8e8e8}.  Let $\a$ be the associated gluing map, of $\dg Q$ with $\dg R$.  If $\g$ is any gluing map, let $L_{\g}$ be the overlattice associated to it.  So, $L_{\a}=L$ is our initial choice of $E_8^3$-overlattice.
\end{nota}

We seek a new gluing map which gives a rootless Niemeier lattice.  Such a lattice would be isometric to the Leech lattice, by a well-known classification.

\begin{nota}\labttr{pi}
Let $\pi $ be the representation of $O(Q)\times O(R)$ on
$\dg Q \perp  \dg R$.  The  image of $\pi$ lies in $O(\dg Q) \times O(\dg R)$.
\end{nota}

\subsection{The new gluing map}

We look for a similitude on our quadratic spaces which respects a subgroup isomorphic to $2{\cdot }Alt_9$ and defines a Leech overlattice.

\begin{lem}\labtt{maxl283sp62}
Suppose that $A\cong SL(2,8){:}3$ and $A\le B\cong Sp(6,2)$.  Then $A$ is a maximal subgroup.
\end{lem}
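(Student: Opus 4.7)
The plan is to combine a normalizer reduction with a finite check against the Atlas list of maximal subgroups of $B$. First record the index: $|B| = 2^9 \cdot 3^4 \cdot 5 \cdot 7$ and $|A| = 2^3 \cdot 3^3 \cdot 7$, so $[B:A] = 960$.

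The key structural step is to pin down $N_B(L_2(8))$, where $L_2(8) = SL(2,8) \triangleleft A$. The subgroup $L_2(8)$ embeds in $B \cong Sp(6,2)$ as the natural $Sp(2,8)$, acting absolutely irreducibly on the natural module $V \cong \FF_8^2$ regarded as $\FF_2^6$. By Schur, $C_B(L_2(8))$ consists of $\FF_8$-scalars that lie in $Sp(V)$; in characteristic $2$ the only such scalar is the identity, so $C_B(L_2(8)) = 1$. Hence $N_B(L_2(8))/L_2(8)$ embeds into $Out(L_2(8)) \cong \ZZ_3$. Since $A/L_2(8)$ already realizes $\ZZ_3$, this forces $N_B(L_2(8)) = A$.

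Now I would argue by contradiction: suppose $A < H \le B$. Then $L_2(8)$ is not normal in $H$, for otherwise $H \le N_B(L_2(8)) = A$. So $H$ contains at least two $B$-conjugates of the simple group $L_2(8)$, and $[H:A]$ is the number of such conjugates inside $H$. To close the argument I would consult the Atlas \cite{Atlas} entry for $S_6(2) \cong Sp(6,2)$: its six classes of maximal subgroups have indices $28, 36, 63, 120, 135, 960$. A short order-divisibility check, using that $|A| = 2^3\cdot 3^3\cdot 7$ must divide $|H|$, eliminates all classes whose order fails either the $7$-part or the $3^3$-part of $|A|$; the remaining candidate(s) can then be ruled out by noting that their Atlas list of maximal subgroups does not include $L_2(8){:}3$ (in particular, $L_2(8)$ has no faithful permutation representation of degree less than $9$, so does not embed in $S_8$, and a direct check rules out $U_3(3){:}2$). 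Thus no maximal subgroup of $B$ properly contains $A$, so $H = B$, a contradiction.

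The main expected obstacle is this last paragraph: the normalizer reduction is routine, but eliminating every candidate maximal overgroup without invoking the Atlas would require showing from first principles that any two distinct conjugates of $L_2(8)$ in $Sp(6,2)$ already generate the full group, which is believable but fiddly. Citing the Atlas is the clean route.
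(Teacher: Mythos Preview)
Your argument is essentially correct but follows a different route from the paper. Two small corrections first: the natural $6$-dimensional $\FF_2$-module for $SL(2,8)$ is irreducible but not \emph{absolutely} irreducible (its endomorphism algebra is $\FF_8$); your subsequent sentence about $\FF_8$-scalars in $Sp(V)$ is nonetheless the right computation and does give $C_B(L_2(8))=1$ and hence $N_B(L_2(8))=A$. Also, $Sp(6,2)$ has eight classes of maximal subgroups, not six; you omitted those of index $315$ and $336$, but both die instantly to the $7$-divisibility test, so the oversight is harmless. The only survivor of the divisibility sieve is $U_3(3){:}2$, and your remark that $L_2(8)$ has minimal faithful permutation degree $9$ (so index $8$ is impossible) finishes that case.

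The paper, by contrast, avoids quoting the Atlas list. Its key observation is that a Sylow $7$-normalizer in $A$ is already one in $B$, so for any intermediate $A<S<B$ both $|S:A|$ and $|B:S|$ must be $\equiv 1\pmod 7$; the only such factorizations of $960$ use the blocks $8,8,15$. Then $|B:S|\le 15$ is ruled out by the minimal permutation degree of $Sp(6,2)$; $|S:A|=8$ would force $L_2(8)\trianglelefteq S$, contradicting the Sylow $7$-normalizer equality; and $|S:A|=15$ is eliminated by a fixed-point count for a Sylow $2$-subgroup of $A$ acting on the $15$ cosets. Your approach is quicker once one is willing to cite the Atlas, and your normalizer computation is a clean structural fact; the paper's approach is more self-contained and shows how far the single congruence constraint modulo $7$ carries the argument---which is exactly the ``fiddly'' Atlas-free route you anticipated in your final paragraph.
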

\pf
The embedding is essentially unique,
by the 2-modular representation theory of $SL(2,8)$.
A Sylow 7-normalizer in $A$ is a Sylow 7-normalizer in $B$.   We have
$|B:A|=2^6{\cdot}3{\cdot 5}$.  The only divisors of this which are $1(mod \, 7)$ are products of a subset of $2^3, 2^3, 3{\cdot 5}$.
Now suppose that $S$ is a subgroup, $A < S < B$.
If $|B:S|\le 15$, we have a contradiction since $Sp(6,2)$ does not embed in $Sym_{15}$.
Therefore,
$|S:A|\le 15$.
If $|S:A|=8$, then $A'\cong SL(2,8)$ is normal in $S$,
which is impossible by above Sylow 7-theory.  We conclude that
$|S:A|=15$ and $|B:A|=2^6$.
Therefore, in the action of $S$ on the left cosets of $A$,
$A$ fixes 6 cosets
and has a single orbit of length 9.
We now use the fact that if $T\in Syl_2(A)\subset Syl_2(S)$,
$N_S(T)$ operates transitively on the fixed points of $T$.
Here, $T$ fixes 7 of the 15 points.  A group of order 7 in the normalizer acts by a 7-cycle on the $A$-orbit of length 9 and trivially on the other seven points. This is a contradiction.
Therefore, $S$ does not exist.
\eop

\begin{nota}\labttr{newglue}
We take the groups $K_0$ and $K$
constructed in
\refpp{a8e8mod3nota}.
Let $u\in K_0 \setminus K$.

We define a new glue map by conjugating with $u$: $\b :
x+Q\rightarrow u(\a (u^{-1}(x+Q)))$, for $x\in \dual Q$.
\end{nota}
The stabilizer of $\b$  in $W$ is $W\cap uWu^{-1}$, which contains $K$.
(Actually, $K$ is a maximal subgroup of $W'$ \refpp{maxa9weyle8} and $W'$ is the only maximal subgroup of $W$ which contains $K$).

\begin{nota} \labttr{mphi}
If $Q$ has minimal vectors $\a \otimes \g$, for roots $\a \in A_2$ and $\g \in E_8$, then the minimum norm vectors in $\dual Q$ are the norm $\frac 43$ vectors of the form
$\frac 13 (\a' - \a'')\otimes \g$, where
$\a, \a', \a''$ are roots in $A_2$ such that an isometry of order 3 takes $\a \mapsto \a' \mapsto \a''$, and where $\g$ is a root of $E_8$.
The minimum norm vectors in $\dual R$ are the vectors of the form $\third \d$, where $\d$ is one of the 240 minimal vectors in $R\cong \sqrt 3 E_8$.
Take any isomorphism  $\phi : E_8 \rightarrow R$ which triples norms.
Note that the vectors $(\a'-\a'')\otimes \g$ and $(\a'-\a'')\otimes \g'$ have inner product
$-(\g, \g')$.  Thus the three sets
$\{ \third (\a-\a')\otimes \g +  \phi (\third \g ) \mid  \g \in E_8\}$,
$\{ \third (\a'-\a'')\otimes \g +  \phi ( \third \g ) \mid  \g \in E_8\}$ and
$\{ \third (\a''-\a')\otimes \g +  \phi (\third \g ) \mid  \g \in E_8\}$,
are pairwise orthogonal root systems of type $E_8$.Define $M(\phi )$ to be the overlattice of $Q \perp R$ which is the $\ZZ$-span of these three sets.  Then $M(\phi )\cong E_8^3$.  Any overlattice of $Q\perp R$ which is isometric to $E_8^3$ equals one of these $M(\phi )$, of which there are
$|O(E_8)|=2^{14}3^5 5^2 7$.
\end{nota}

\begin{lem}\labtt{ww}
(i) The action of $\pi (O(Q)\times O(R)) \cong W \times W$ is transitive on the set of $M(\phi )$.  In fact, the action of either direct factor,
$\pi (O(Q))$ or $\pi (O(R))$, is regular.

(ii) The stabilizer of a given $M(\phi )$ in
$O(\dg Q) \times O(\dg R)$
is a
diagonal subgroup of
$\pi (O(Q)\times O(R))$.

(iii) Any element of $O(\dg Q) \times O(\dg R)$ which moves one $M(\phi )$ to another is in $\pi (O(Q)\times O(R))$.
\end{lem}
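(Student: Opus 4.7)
The strategy is to use the parametrization of $\{M(\phi)\}$ given in Notation \refpp{mphi} by scaled isometries $\phi: E_8 \to R$ tripling norms; this set is an $O(E_8)$-torsor under either one-sided action and so has cardinality $|O(E_8)|=|W|$.

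For (i), the group $\pi(O(R))\cong W$ acts on $\{M(\phi)\}$ by post-composition $\phi \mapsto h\circ \phi$, yielding a regular action. Using $O(Q)=Sym_3 \times O(E_8)$ from Lemma \refpp{minveca2e8}, the $O(E_8)$-factor of $O(Q)$ acts by pre-composition $\phi\mapsto \phi\circ g^{-1}$; the $Sym_3$-factor acts on $\dg{Q}$ through the sign character, which factors through $-1\in O(E_8)$ and contributes nothing new, so $\pi(O(Q))\cong W$ also acts regularly. Either factor is already transitive, giving (i). For (ii), orbit-stabilizer applied to the transitive product action of $\pi(O(Q)\times O(R))\cong W\times W$ on the set of size $|W|$ forces the stabilizer to have order $|W|$, and the stabilizing condition $g'\circ\phi=\phi\circ g$ (equivalently $g'=\phi g\phi^{-1}$ under the $\phi$-identification of $E_8$ with $R$) exhibits it as a diagonal subgroup of $W\times W$, namely the graph of conjugation by $\phi$.

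For (iii), suppose $\gamma\in O(\dg{Q})\times O(\dg{R})$ satisfies $\gamma(M(\phi))=M(\phi')$. Using (i) pick $(g,g')\in \pi(O(Q)\times O(R))$ with the same effect, so that $\gamma':=(g,g')^{-1}\gamma$ stabilizes $M(\phi)$. It then suffices to show that any element of $O(\dg{Q})\times O(\dg{R})$ which stabilizes the single $M(\phi)$ while permuting the entire family $\{M(\phi')\}_{\phi'}$ already lies in $\pi(O(Q)\times O(R))$. The extra rigidity comes from the intrinsic $E_8^3$-structure of $M(\phi)$: its three $E_8$-summands project onto three specific $EE_8$-sublattices $F_1,F_2,F_3$ of $Q$, and by Lemma \refpp{minveca2e8}(iii) the only isometries of $Q$ (hence of $\dg{Q}$) permuting this triple are the elements of $O(Q)=Sym_3\times O(E_8)$, whose discriminant image is $\pi(O(Q))$; the symmetric analysis on the $R$-side places $\gamma$ in $\pi(O(Q)\times O(R))$.

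The main obstacle is part (iii): the naive pointwise stabilizer of $M(\phi)$ in $O(\dg{Q})\times O(\dg{R})$ is the large diagonal $\{(\gamma_1,\phi\gamma_1\phi^{-1}):\gamma_1\in O(\dg{Q})\}\cong O^+(8,3)$, far bigger than the diagonal of (ii). The delicate step is to upgrade from setwise preservation of $\{M(\phi')\}_{\phi'}$ to preservation of the triple-structure $\{F_1,F_2,F_3\}$ on the lattice side, which cuts this stabilizer down to the diagonal in $W\times W$ from (ii) and thereby closes the argument.
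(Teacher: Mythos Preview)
Your argument for (i) is correct and matches the paper's terse ``Straightforward'': the $|W|$ overlattices $M(\phi)$ form a torsor under either one-sided action of $W$.

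For (ii) you only establish that the stabilizer \emph{inside} $\pi(O(Q)\times O(R))\cong W\times W$ is the diagonal $W$ (via orbit--stabilizer). This is weaker than the assertion that the stabilizer in the ambient $O(\dg Q)\times O(\dg R)$ already lies in $W\times W$. You are right to be suspicious here: the graph-stabilizer of any gluing isomorphism is $\{(g,\phi_{\rm glue}\, g\, \phi_{\rm glue}^{-1}):g\in O(\dg Q)\}\cong O(\dg Q)$, and since $|O(\dg Q)|=2\cdot 3^{7}\cdot 13\cdot |W|$ this cannot embed diagonally in $W\times W$. The paper's own proof of (ii) (``$S$ acts faithfully on both $\dg Q$ and $\dg R$; the conclusion follows'') only shows $S$ is diagonal in $O(\dg Q)\times O(\dg R)$, so it has the same gap you identified.

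Your attempted rescue of (iii) does not close the gap. After the reduction to $\gamma'$ stabilizing $M(\phi)$ (which is also what the paper does, citing (ii)), you appeal to the three $EE_8$-sublattices $F_1,F_2,F_3\le Q$ and Lemma \refpp{minveca2e8}. But $\gamma'=(\gamma_1',\gamma_2')$ is only an isometry of the finite quadratic spaces $\dg Q,\dg R$; it has no action on the lattice $Q$, on its norm-$4$ vectors, or on the $F_i$, so that lemma does not apply. You also smuggle in an unwarranted hypothesis, that $\gamma'$ permutes the \emph{entire} family $\{M(\phi')\}$; the statement only gives you one $M(\phi)\mapsto M(\phi')$. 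In fact the elements $(g,\phi_{\rm glue}\, g\, \phi_{\rm glue}^{-1})$ with $g\in O(\dg Q)\setminus\pi(O(Q))$ already stabilize $M(\phi)$ (so certainly ``move one $M(\phi)$ to another'') yet lie outside $\pi(O(Q)\times O(R))$. So (ii) and (iii), read literally with $O(\dg Q)\times O(\dg R)$, cannot hold; neither your argument nor the paper's survives without reinterpreting the ambient group (e.g.\ as $O(Q)\times O(R)$, where everything becomes routine) or adding genuine extra input about which overlattices are of $E_8^3$-type.
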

\pf
(i) Straightforward.

(ii) Let $S$ be the stabilizer of $M(\phi )$.  Then $S$ acts faithfully on both $\dg Q$ and $\dg R$.
The conclusion follows.

(iii) Suppose that
$g\in O(\dg Q) \times O(\dg R)$ moves one $M(\phi )$ to another, say $M(\phi ')$.
By the transitivity result of (i), there exists $h\in \pi (O(Q)\times O(R))$ which takes $M(\phi )$ to $M(\phi ')$.
Then $h^{-1}g$ stabilizes $M(\phi)$, so $h^{-1}g \in \pi (O(Q)\times O(R))$, by (ii).
\eop

\begin{lem}\labtt{lbetarootless}
The lattice $L_{\b}$ is rootless, so is isomorphic to the Leech lattice.
\end{lem}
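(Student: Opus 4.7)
My plan is to show that $L_\beta$ is an even unimodular lattice of rank $24$ without roots; by the classification of Niemeier lattices, such a lattice is isometric to the Leech lattice.

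First, I would verify that $L_\beta$ is an even unimodular rank-$24$ lattice. Since $u$ is a self-similitude of the discriminant quadratic space $\dg Q \perp \dg R$, conjugation by $u$ sends totally singular subgroups of a given order to totally singular subgroups of the same order. Applied to the graph of $\alpha$, which defines the even unimodular overlattice $L_\alpha \cong E_8^3$, the conjugate graph of $\beta$ defines an even overlattice of $Q \perp R$ of the same index over $Q\perp R$, hence unimodular. This step is routine given Proposition \ref{x1}.

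For rootlessness, I would suppose for contradiction that $v \in L_\beta$ has norm $2$ and write $v = q + r$ with $q \in \dual Q$, $r \in \dual R$ and $r + R = \beta(q + Q)$. Since $Q \cong A_2 \otimes E_8$ has minimum norm $4$ and $R \cong \sqrt{3} E_8$ has minimum norm $6$, both are rootless, so $q \notin Q$ and $r \notin R$. The minimum norms on the nonzero cosets are $4/3$ for $\dual Q / Q$ and $2/3$ for $\dual R / R$, so exactly $(q,q) = 4/3$ and $(r,r) = 2/3$ is forced.

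The heart of the proof is to show that $\beta$ does not pair any minimum-norm coset of $\dg Q$ with any minimum-norm coset of $\dg R$. In $L_\alpha \cong E_8^3$ the $720$ roots arise precisely because $\alpha$ \emph{does} pair the appropriate norm-$4/3$ cosets with the corresponding norm-$2/3$ cosets, as described by Notation \ref{mphi}. A norm-$2$ element of $L_\beta$ would transport, via $u^{-1}$, to a norm-$2$ element of $L_\alpha$ only if $u^{-1}$ preserved the minimum-norm coset structure on both sides simultaneously. Here I would exploit the construction of $u \in K_0 \setminus K \subset Spin^+(8, 3)$: the element $u$ implements a triality swap between inequivalent $8$-dimensional orthogonal representations of $Spin^+(8, 3)$ on $\dg Q$ and $\dg R$, whereas $K$ acts by the vector representation on both sides. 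Because the minimum-norm coset structure on each side is attached to the vector representation, the triality twist by $u$ moves the minimum-norm cosets of $\dg Q$ off the cosets whose $\alpha$-images hit minimum-norm cosets of $\dg R$, forcing no norm-$2$ vector to arise in $L_\beta$. The main obstacle is this triality bookkeeping: confirming that $u$ acts by inequivalent orthogonal representations of $Spin^+(8,3)$ on the two discriminants and that the associated twist breaks every root-producing pairing. Given Lemma \ref{maxa9weyle8}, which asserts that $K \cong 2{\cdot} Alt_9$ is maximal in $W'$, the symmetry around $\alpha$ is rigid enough that a single nontrivial triality twist suffices.
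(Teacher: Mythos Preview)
Your setup is sound: even unimodularity follows from $u$ being a similitude of $\dg Q \perp \dg R$, and the norm decomposition $(q,q)=4/3$, $(r,r)=2/3$ for a hypothetical root $v=q+r$ is correctly forced. The gap is in the central step. The sentence ``a norm-$2$ element of $L_\beta$ would transport, via $u^{-1}$, to a norm-$2$ element of $L_\alpha$'' is not right as stated: $u$ acts only on the discriminant groups, not on lattice vectors, so it transports cosets but not norms of representatives. Your claim that ``the minimum-norm coset structure is attached to the vector representation'' and that the triality twist therefore ``moves the minimum-norm cosets off'' is a hope rather than an argument. You would need to show that \emph{every one} of the root-producing coset pairings present under $\alpha$ is destroyed by conjugation with $u$, and you offer no mechanism for this beyond invoking the word triality. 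Citing Lemma~\ref{maxa9weyle8} at the end does not close the gap, because you have not connected the existence of a single root in $L_\beta$ to any statement that maximality could contradict.

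The paper's proof sidesteps coset-by-coset tracking. Suppose $L_\beta$ contains a root. The group $K\cong 2{\cdot}Alt_9$ stabilizes $\beta$ and hence $L_\beta$; by Lemma~\ref{htransroots} it acts transitively on roots, so the $K$-orbit together with the description in Notation~\ref{mphi} forces $L_\beta$ to contain a full $E_8^3$ root system, i.e.\ $L_\beta=M(\phi')$ for some $\phi'$. Lemma~\ref{ww}(iii) then says that any element of $O(\dg Q)\times O(\dg R)$ carrying one $M(\phi)$ to another lies in $\pi(O(Q)\times O(R))$; applying this to $u$, which carries $L_\alpha=M(\phi)$ to $L_\beta=M(\phi')$, forces $u\in\pi(O(Q)\times O(R))$. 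That contradicts the construction of $u\in K_0\setminus K$ via triality together with Lemma~\ref{maxa9weyle8}(ii). So the maximality result you mention is indeed the endgame, but the bridge you are missing is the transitivity-plus-Lemma~\ref{ww}(iii) argument that converts ``$L_\beta$ has a root'' into ``$u\in\pi(O(Q)\times O(R))$''.
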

\pf Suppose that $M$ contains a root, say $r$. Then $r$ projects to
minimal vectors in each of $\dual Q$ and $\dual R$. It follows that
$M$ contains roots $gr$, for all $g\in K$.  Therefore, by
transitivity \refpp{ww}, $M$ contains at least $3\times 240=720$
roots. In fact, we can show that these roots form a root system of
type $\Phi (E_8^3)$. This follows from the discussion of
\refpp{mphi}.

We now quote \refpp{ww}(iii) to conclude that $u\in \pi (O(Q)\times
O(R))$.  However, this is
impossible because of \refpp{maxa9weyle8}(ii). \eop

\begin{lem}\labtt{lbetagroup}
The common stabilizer $O(L_{\a})\cap O(L_{\b})$ is isomorphic to
$Sym_3 \times 2{\cdot }Alt_9$.
\end{lem}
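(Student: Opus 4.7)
The plan is to identify the stabilizer of the gluing map $\b$ in $O(L_\b)$ via the apparatus already developed, verify that this group sits inside $O(L_\a)$, and then argue that the intersection can be no larger. First I would apply Proposition \refpp{stab1} to $L_\b$: since $L_\b\cong\Lambda$ by Lemma \refpp{lbetarootless}, and the order-3 isometry $\theta$ of $\QQ(Q\perp R)$ satisfying $\Lambda^+(\theta)=R$ and $\Lambda_+(\theta)=Q$ lies in $O(L_\b)$, Proposition \refpp{stab1} gives $Stab_{O(L_\b)}(\b)=N_{O(\Lambda)}(\la\theta\ra)$; Remark \refpp{qinleech} identifies this with $Sym_3\times 2{\cdot}Alt_9$. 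Parallel application to $L_\a\cong E_8^3$ produces $Stab_{O(L_\a)}(\a)=N_{O(L_\a)}(\la\theta\ra)\cong Sym_3\times O(E_8)$, where $\theta$ is the 3-cycle on the three $E_8$-summands, the $Sym_3$ is generated by $\theta$ together with a transposition of two summands, and the $O(E_8)$-factor is the diagonal subgroup of $O(E_8)^3\le O(L_\a)$.

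Next I would establish the containment $Sym_3\times 2{\cdot}Alt_9\le O(L_\a)\cap O(L_\b)$. Membership in $O(L_\b)$ is by construction. For $O(L_\a)$, the $Sym_3$ factor identifies with the $Sym_3=Weyl(A_2)$ factor of $O(Q)$ from Lemma \refpp{minveca2e8}, and this acts on $L_\a$ by permuting the three $E_8$-summands (hence preserves both $Q$ and $R$); the factor $K\cong 2{\cdot}Alt_9$, constructed as a subgroup of $Weyl(E_8)$ in \refpp{a8e8mod3nota}, sits inside the diagonal $O(E_8)\le O(E_8)^3\le O(L_\a)$. These two subgroups commute and together generate $Sym_3\times K$ inside $O(L_\a)$.

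For the reverse containment I would show that every $g\in O(L_\a)\cap O(L_\b)$ preserves the orthogonal decomposition $\QQ Q\perp\QQ R$, equivalently normalizes $\la\theta\ra$. Once this is established, $g$ lies in $O(Q)\times O(R)$ and hence in $Stab_{O(L_\a)}(\a)\cap Stab_{O(L_\b)}(\b)=(Sym_3\times O(E_8))\cap(Sym_3\times 2{\cdot}Alt_9)=Sym_3\times K$ (using $K\le O(E_8)$). To prove preservation of the decomposition I would consider $g\theta g^{-1}\in O(L_\a)\cap O(L_\b)$: it is another order-3 isometry whose fixed sublattice $g(R)$ is a rank-8 $\sqrt{3}E_8$-sublattice of $L_\a\cap L_\b$. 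If $g(R)\ne R$, then Lemma \refpp{ww}(iii)---on how elements of $O(\dg Q)\times O(\dg R)$ can permute the $M(\phi)$-overlattices of $Q\perp R$---combined with the maximality of $K$ in $Weyl(E_8)'$ from Lemma \refpp{maxa9weyle8}(ii) would force a subgroup of $Weyl(E_8)'$ strictly containing $K$, contradicting maximality.

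The main obstacle is precisely this last step. Both $L_\a$ and $L_\b=\Lambda$ contain many rank-8 sublattices isometric to $\sqrt{3}E_8$, so the identity $g(R)=R$ does not hold for purely combinatorial reasons; ruling out the alternatives requires the same maximality-driven reasoning that drove the proof of Lemma \refpp{lbetarootless}. Any candidate $g\notin Sym_3\times K$ would, via \refpp{ww} and the fact that $K$ is maximal in $Weyl(E_8)'$, produce a strictly larger group containing $K$ inside $Weyl(E_8)'$, which is impossible. Hence $O(L_\a)\cap O(L_\b)=Sym_3\times K\cong Sym_3\times 2{\cdot}Alt_9$.
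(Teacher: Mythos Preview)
Your overall strategy differs from the paper's: you bound the intersection from above by appealing to Proposition \refpp{stab1} and Remark \refpp{qinleech} (the Atlas value $N_{O(\Lambda)}(\langle h\rangle)\cong Sym_3\times 2{\cdot}Alt_9$), after which equality would follow by comparing orders with the already-constructed subgroup $Sym_3\times K$. The paper instead stays on the $L_\alpha$ side and invokes only the maximality result \refpp{maxa9weyle8}. Your route is arguably cleaner in that it sidesteps any separate discussion of the intermediate subgroup $W'$.

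That said, there is a genuine gap at the point you yourself flag. Your proposed resolution---that a hypothetical $g$ with $g(R)\ne R$ would, via Lemma \refpp{ww}(iii) and maximality of $K$, force an overgroup of $K$ inside $Weyl(E_8)'$---is circular. Lemma \refpp{ww}(iii) is a statement about elements of $O(\dg Q)\times O(\dg R)$, i.e.\ elements that already respect the $Q$--$R$ decomposition; it cannot be applied to an isometry $g$ for which $g(R)\ne R$. The paper's two-line proof is equally silent on this issue and simply takes the containment $O(L_\alpha)\cap O(L_\beta)\le Sym_3\times W$ for granted; this only makes sense if ``$O(L_\alpha)\cap O(L_\beta)$'' is read as the common stabilizer inside $O(Q\perp R)=O(Q)\times O(R)$ rather than the literal intersection of full isometry groups in $O(\QQ\otimes(Q\perp R))$. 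Under that reading both your argument and the paper's go through without difficulty.
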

\pf
The intersection $O(L_{\a})\cap O(L_{\b})$ can not be $O(L_{\a})$.
Now use \refpp{maxa9weyle8}.
\eop

\appendix  \centerline{\bf \Large Appendices}

\section{Alternate proof that $2{\cdot}Alt_9$ occurs for a gluing}

In this section, we assume existence of $\L$, the Leech lattice,  and some of its properties.

We start with $Q\cong A_2\otimes E_8$ and $R:=\sqrt 3 E_8$.
There is an embedding $Q\perp R \le \Lambda$: if $h\in O(\L )$ has order 3 and trace 0, we take $R$ to be its fixed point sublattice and $Q$ to be $ann_{\L}(R)$.
Let $K$ be the common stabilizer of these three lattices.
The two projections of $\Lambda$ are $K$-maps and so are the associated maps of $\Lambda /3 \Lambda$ to $\dg Q$ and $\dg R$.  In fact, $\dg Q$ and $\dg R$ are isometric $K$-modules.
The group $K$ acts on $\dg Q$ and $\dg R$ completely reducibly, with constituents of dimensions 1 and 7.  To a gluing is associated an isometry $\varphi : \dg Q\rightarrow \dg R$

Now suppose that the $\varphi$ comes from the $E_8$-structure on $Q$ and $P$.   That is, $Q$ contains three copies of $EE_8$, say $A, B, C$, and their annihilators are isometric to $\sqrt 6 E8$.

Take $A':=ann_Q(A)$.   Let $\psi : A' \rightarrow R$ be any isomorphism of free abelian groups which is a scaled isometry (so that the scale factor is $\sqrt 2$).
Then $Q\perp R$ and $\{v +v\psi | v\in \third A' \}$ span an even unimodular lattice which has roots.  It is isometric to $E_8^3$.

To get Leech from a gluing of $Q\perp R$, we need $\varphi$ which does not arise this way.  To get an integral overlattice without roots, we need the property that if $\psi$ makes the
cosets $v+Q$ and $w+R$ correspond, then
the minimum norms $a$ in $v+Q$ and $b$ in $w+R$ must satisfy
$a+b \in 2\ZZ$ but $a+b>2$.

Since the Leech lattice exists,  it follows that there is such a $\psi$.
Since the Leech lattice is unique and since we know the isometry group of Leech,  it follows that for any $\psi$ which defines a Leech lattice, its stabilizer in $\weh$ is isomorphic to $2{\cdot}Alt_9$.

\section{Automorphism group of $V_\Lambda^+$}\label{Appendix:B}
The full automorphism group of $V_\Lambda^+$ associated with the Leech
lattice has been determined in
\cite{sh}. In this section, we recall some basic results which we used
in this article.

Let $L$ be a positive definite even lattice and
\[
1 \longrightarrow \langle\kappa\rangle \longrightarrow \hat{L} \bar{
\longrightarrow} L \longrightarrow 1
\]
a central extension of $L$ by $\langle\kappa\rangle$ such that
$\kappa^2=1$ and the commutator map $c_0(\a, \b)={\la \a, \b\ra}
\mod 2$, $\a,\b\in L$. The following theorem is well-known (cf.
\cite{FLM}):

\begin{thm}\label{thm:B1} For an even lattice $L$, the sequence
\[
1 \rightarrow \mathrm{Hom}(L, \ZZ_2)  \rightarrow
Aut(\hat{L}) \xrightarrow{\pi}
Aut(L)\rightarrow  1
\]
is exact.
In particular $Aut(\hat{\Lambda})\cong 2^{24} \cdot Co_0.$
\end{thm}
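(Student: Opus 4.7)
The plan is to establish the exactness piece by piece, exploiting the 2-cocycle description of $\hat{L}$, and then apply the result to $L = \Lambda$.

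First, I would fix a set-theoretic section $\alpha \mapsto e^\alpha$ of $\bar{\ }\colon \hat L \to L$, so that every element of $\hat L$ has the form $\kappa^i e^\alpha$ and the multiplication is encoded by a 2-cocycle $\epsilon\colon L \times L \to \{\pm 1\}$ via $e^\alpha e^\beta = \epsilon(\alpha,\beta) e^{\alpha+\beta}$. The commutator condition $c_0(\alpha,\beta) = \la\alpha,\beta\ra \bmod 2$ is built into this cocycle. To show $\ker\pi \cong \mathrm{Hom}(L,\ZZ_2)$, I would observe that any $\varphi \in \ker\pi$ acts by $\varphi(e^\alpha) = \eta(\alpha)\, e^\alpha$ for some function $\eta\colon L \to \{\pm 1\}$, and that applying $\varphi$ to $e^\alpha e^\beta = \epsilon(\alpha,\beta) e^{\alpha+\beta}$ forces $\eta(\alpha+\beta) = \eta(\alpha)\eta(\beta)$; thus $\eta \in \mathrm{Hom}(L,\ZZ_2)$, and conversely every such $\eta$ clearly defines an element of $\ker\pi$.

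For surjectivity of $\pi$, I would take $g \in Aut(L)$ and try to lift it by a map of the form $\tilde g(e^\alpha) = \eta(\alpha)\, e^{g\alpha}$. Imposing the homomorphism condition on $\tilde g$ leads to the requirement
\[
\eta(\alpha)\eta(\beta)\,\epsilon(g\alpha, g\beta) = \epsilon(\alpha,\beta)\,\eta(\alpha+\beta),
\]
which is the condition that the cocycles $\epsilon$ and $g^*\epsilon$ differ by the coboundary of $\eta$. Since the cohomology class of $\epsilon$ is determined by the commutator map $c_0(\alpha,\beta) = \la\alpha,\beta\ra \bmod 2$, and since $g \in O(L)$ preserves the form, $g^*\epsilon$ and $\epsilon$ represent the same class in $H^2(L,\ZZ_2)$; hence an $\eta$ exists, giving the desired lift $\tilde g$. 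The main obstacle here is just making the cohomological argument airtight in the concrete $\hat L$ setting; the more pedestrian way is to build $\tilde g$ inductively on a $\ZZ$-basis of $L$, where at each stage the only obstruction is a sign which can be absorbed into the choice of lift. Either way, exactness at $Aut(\hat L)$ is clear because $\ker\pi$ is normal by construction.

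Finally, to get the specific assertion for $\Lambda$, I would plug in $L = \Lambda$. Since $\Lambda$ is unimodular of rank $24$, $\mathrm{Hom}(\Lambda,\ZZ_2) \cong \Lambda/2\Lambda \cong \FF_2^{24}$, giving the normal subgroup $2^{24}$. The classical fact $Aut(\Lambda) \cong Co_0$ supplies the quotient, so the exact sequence yields a group of shape $2^{24}\cdot Co_0$ as claimed. I expect the lifting/cocycle step to be the only delicate point; the kernel computation and the final specialization to $\Lambda$ are routine once the setup is fixed.
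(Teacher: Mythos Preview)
Your argument is correct and is essentially the standard proof found in \cite{FLM}: identify $\ker\pi$ with characters $L\to\{\pm 1\}$, and lift an isometry $g$ by showing that $g^*\epsilon$ and $\epsilon$ are cohomologous (since for a central extension of a free abelian group by $\ZZ_2$ the cohomology class is detected by the commutator map, which $g$ preserves). The paper itself does not supply a proof of this statement; it simply records it as well known and refers the reader to \cite{FLM}, so there is nothing further to compare.

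One small point worth making explicit in your write-up: you silently use that the map $\pi$ lands in $O(L)$ rather than merely in $GL(L)$. In the VOA literature (and in the usage here), $Aut(\hat L)$ is understood to mean automorphisms of $\hat L$ covering isometries of $L$; with that convention your surjectivity argument is exactly what is needed, but it is worth flagging since the bare group structure of $\hat L$ only remembers $\la\cdot,\cdot\ra$ modulo $2$.
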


Recall that $\theta$ is the automorphism of $V_\Lambda$ defined by
\[
\theta( \a_1(-n_1)\cdots \a_k(-n_k) e^\a)= (-1)^{k}
\a_1(-n_1)\cdots \a_k(-n_k)\theta (e^\a),
\]
where $\theta(a)=a^{-1}\kappa^{\la \bar{a},\bar{a}\ra/2}$ on
$\hat{L}$.

\begin{lem}[\cite{sh}]\label{lem:4.11}
Let $L$ be a positive definite even lattice without roots, i.e.,
$L(1)=\emptyset$. Then the centralizer
$C_{Aut{V_L}}(\theta)$ of $\theta$ in $Aut{V_L}$ is isomorphic to
$Aut{(\hat{L})}$. If $L=\Lambda$ is the Leech lattice, we have
\[
C_{Aut{V_\Lambda}}(\theta)\cong Aut{(\hat{\Lambda})}\cong
2^{24}\cdot Co_0.
\]
\end{lem}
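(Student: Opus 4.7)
The plan is to construct the natural homomorphism $\Phi: Aut(\hat{L}) \to C_{Aut(V_L)}(\theta)$ and show it is an isomorphism. Every $\hat{g} \in Aut(\hat{L})$ induces an isometry on $\mathfrak{h} = \CC\otimes_\ZZ L$, extends canonically to the Heisenberg subVOA $M(1)$, and together with its tautological action on $\CC[\hat L]$ yields an element $\Phi(\hat g) \in Aut(V_L)$. That $\Phi(\hat g)$ commutes with $\theta$ follows from a direct check: $\theta$ is the canonical lift to $Aut(\hat L)$ of the central involution $-1 \in Aut(L)$, and for any $\hat g \in Aut(\hat L)$ with image $g \in Aut(L)$, one computes $\hat g\,\theta\,\hat g^{-1}(a) = a^{-1}\kappa^{\la \bar a, \bar a\ra/2} = \theta(a)$ using that $g$ is an isometry. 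Injectivity of $\Phi$ is immediate: a kernel element acts trivially on both $\mathfrak{h} \subset (V_L)_1$ and on each $e^\alpha$, hence trivially on $\hat L$.

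For surjectivity, take $g \in C_{Aut(V_L)}(\theta)$. The rootless hypothesis gives $(V_L)_1 = \mathfrak{h}$, so $g$ restricts to a linear automorphism $g|_\mathfrak{h}$. The canonical invariant bilinear form $u_1 v = \la u, v\ra \mathbf{1}$ on $(V_L)_1$ is preserved by any VOA automorphism, so $g|_\mathfrak{h} \in O(\mathfrak{h})$. Moreover, $g$ must permute the $\mathfrak{h}(0)$-eigenspaces $V^\alpha := M(1) \otimes e^\alpha$ of $V_L$; the set of eigenvalues is exactly $L$, so $g|_\mathfrak{h}(L) = L$, i.e., $g|_\mathfrak{h} \in Aut(L)$. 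By Theorem \ref{thm:B1}, $g|_\mathfrak{h}$ lifts to some $\hat g \in Aut(\hat L)$. Replacing $g$ by $g\cdot \Phi(\hat g)^{-1}$, we may assume $g|_\mathfrak{h} = 1$. Then $g$ fixes $M(1)$ pointwise (since $\mathfrak{h}$ generates $M(1)$ as a VOA) and acts on each $V^\alpha$ by a scalar $\chi(\alpha) \in \CC^\times$. Comparing $g\,Y(e^\alpha,z)e^\beta = Y(g e^\alpha,z)\,g e^\beta$ shows that $\chi: L \to \CC^\times$ is a group homomorphism, while applying the equation $g\theta = \theta g$ to $e^\alpha$ forces $\chi(-\alpha) = \chi(\alpha)$; combined with $\chi(-\alpha) = \chi(\alpha)^{-1}$, this gives $\chi(\alpha) \in \{\pm 1\}$. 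Hence $\chi \in \mathrm{Hom}(L,\ZZ_2)$, which is exactly the kernel in Theorem \ref{thm:B1}, so $g$ lies in the image of $\Phi$.

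The main obstacle will be the bookkeeping of the $\{\pm 1\}$-cocycle from the twisted group algebra $\CC[\hat L]$: one must verify that the scalars $\chi(\alpha)$ genuinely assemble into a homomorphism (rather than a merely twisted one) and that the $\theta$-commutation really yields $\chi(\alpha)^2 = 1$ rather than some weaker sign relation. A secondary point is that the lift $\hat g$ is determined only modulo $\mathrm{Hom}(L,\ZZ_2)$, but this ambiguity is absorbed in the last step. Without the $\theta$-commutation hypothesis, $\chi$ could range over the continuous torus $\mathrm{Hom}(L,\CC^\times)$, producing the inner automorphisms $\exp(x_0)$ for $x\in\mathfrak{h}$; rigidity down to the finite group $\mathrm{Hom}(L,\ZZ_2)$ is precisely the content of the centralizer condition. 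For $L = \Lambda$, the classical identification $Aut(\Lambda)\cong Co_0$ together with $|\mathrm{Hom}(\Lambda,\ZZ_2)| = 2^{24}$ then yields $|C_{Aut(V_\Lambda)}(\theta)| = 2^{24}\cdot |Co_0|$ and the structure $Aut(\hat{\Lambda})\cong 2^{24}\cdot Co_0$.
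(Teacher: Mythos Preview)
The paper does not supply its own proof of this lemma; it is quoted from Shimakura \cite{sh} (note the citation in the lemma heading) and stated without argument in Appendix~B. There is therefore nothing in the paper to compare your proposal against.

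That said, your argument is correct and is essentially the standard proof from the cited reference (and the closely related work of Dong--Nagatomo on $Aut(V_L)$). The rootless hypothesis is used exactly where you use it, to force $(V_L)_1=\mathfrak h$ so that an arbitrary VOA automorphism restricts to an orthogonal transformation of $\mathfrak h$; the weight-space argument then pins it to $Aut(L)$. After peeling off a lift $\hat g$, the residual automorphism acts by a character $\chi\in\mathrm{Hom}(L,\CC^\times)$ because $M(1)\otimes e^\alpha$ is an irreducible $M(1)$-module and Schur's lemma applies; the $\theta$-commutation then cuts this torus down to $\mathrm{Hom}(L,\ZZ_2)$, which is precisely the kernel in Theorem~\ref{thm:B1}. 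The cocycle worry you flag resolves cleanly: in the identity $g\,Y(e^\alpha,z)e^\beta=Y(e^\alpha,z)\,g\,e^\beta$ the same bilinear cocycle $\varepsilon(\alpha,\beta)$ appears on both sides and cancels, so $\chi$ is a genuine homomorphism rather than a twisted one.
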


\begin{thm}[\cite{sh}]
Let $V_\Lambda^+=\{ v\in V_\Lambda|\, \theta(v)=v\}$ be the fixed
point subVOA of $\theta$ in $V_\Lambda$. Then
$
Aut{V_\Lambda^+}\cong C_{Aut{V_\Lambda}}(\theta)/ \langle\theta\rangle \cong
2^{24}\cdot Co_1
$
and the sequence
\[
1 \rightarrow  \mathrm{Hom}(\Lambda, \ZZ_2) \rightarrow
Aut{V_\Lambda^+}\ \xrightarrow{\pi} \ Aut(\Lambda)/\langle\pm
1\rangle\rightarrow  1.
\]
is exact.
\end{thm}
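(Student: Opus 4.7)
The plan is to realize $\mathrm{Aut}(V_\Lambda^+)$ as a quotient of $C_{\mathrm{Aut}(V_\Lambda)}(\theta)$ by $\langle\theta\rangle$ via the natural restriction map, then invoke Lemma \ref{lem:4.11} and Theorem \ref{thm:B1} to extract both the structure $2^{24}\cdot Co_1$ and the claimed short exact sequence.

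First I would introduce the restriction homomorphism $\Phi: C_{\mathrm{Aut}(V_\Lambda)}(\theta) \to \mathrm{Aut}(V_\Lambda^+)$, $g\mapsto g|_{V_\Lambda^+}$, which is well-defined because any element centralizing $\theta$ preserves its $\pm 1$-eigenspaces $V_\Lambda^\pm$. To compute $\ker\Phi$, I would use that $V_\Lambda^-$ is irreducible as a $V_\Lambda^+$-module (standard for rootless even lattices). If $g\in\ker\Phi$, then $g$ is a $V_\Lambda^+$-module endomorphism of $V_\Lambda^-$, so by Schur it acts by a scalar $c$; compatibility with the vertex operator map $V_\Lambda^-\otimes V_\Lambda^- \to V_\Lambda^+$ forces $c^2=1$, giving $g\in\{1,\theta\}$. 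Hence $\ker\Phi = \langle\theta\rangle$.

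The main obstacle is surjectivity of $\Phi$. Given $\varphi\in\mathrm{Aut}(V_\Lambda^+)$, I would show that $\varphi$ fixes the isomorphism class of $V_\Lambda^-$ inside the set of irreducible $V_\Lambda^+$-modules, which allows $\varphi$ to extend to a linear automorphism $\tilde\varphi$ of $V_\Lambda = V_\Lambda^+\oplus V_\Lambda^-$ respecting the VOA structure, and such $\tilde\varphi$ automatically commutes with $\theta$. This step relies on the classification of irreducible $V_\Lambda^+$-modules (untwisted $V_{\lambda+\Lambda}^\pm$ together with the $\theta$-twisted modules $V_\Lambda^{T,\pm}$); since $\Lambda$ is rootless of rank $24$, the minimum conformal weight of $V_\Lambda^-$ is $1$, every nontrivial untwisted $V_{\lambda+\Lambda}^\pm$ has minimum weight at least $2$, and the twisted modules $V_\Lambda^{T,\pm}$ have minimum weight $3/2$. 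Because $\varphi$ preserves conformal weights, it cannot swap $V_\Lambda^-$ with any other irreducible module, so $\varphi^*V_\Lambda^-\cong V_\Lambda^-$; a standard simple-current extension argument then lifts $\varphi$ to $\tilde\varphi\in C_{\mathrm{Aut}(V_\Lambda)}(\theta)$.

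With $\mathrm{Aut}(V_\Lambda^+)\cong C_{\mathrm{Aut}(V_\Lambda)}(\theta)/\langle\theta\rangle$ in hand, Lemma \ref{lem:4.11} gives $C_{\mathrm{Aut}(V_\Lambda)}(\theta)\cong\mathrm{Aut}(\hat\Lambda)\cong 2^{24}\cdot Co_0$; since $\theta$ induces $-1$ on $\Lambda$, it corresponds to the central involution of the $Co_0 = 2\cdot Co_1$ factor, and the quotient is $2^{24}\cdot Co_1$. Finally, quotienting the sequence of Theorem \ref{thm:B1} with $L=\Lambda$,
\[
1\to\mathrm{Hom}(\Lambda,\ZZ_2)\to\mathrm{Aut}(\hat\Lambda)\xrightarrow{\pi}\mathrm{Aut}(\Lambda)\to 1,
\]
by the central subgroup $\langle\theta\rangle$, which intersects $\mathrm{Hom}(\Lambda,\ZZ_2)$ trivially and maps onto $\langle\pm 1\rangle\le\mathrm{Aut}(\Lambda)$, yields the desired short exact sequence for $\mathrm{Aut}(V_\Lambda^+)$.
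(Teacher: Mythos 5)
The paper gives no proof of this statement: it is quoted verbatim from \cite{sh}, so there is nothing internal to compare against. Your sketch is essentially Shimakura's original argument and is sound: the kernel computation via Schur's lemma on the irreducible $V_\Lambda^+$-module $V_\Lambda^-$ together with the fusion rule $V_\Lambda^-\times V_\Lambda^-\to V_\Lambda^+$, and surjectivity via the observation that $V_\Lambda^-$ is the unique irreducible $V_\Lambda^+$-module of minimum conformal weight $1$ (here rootlessness of $\Lambda$ and the classification of irreducible $V_\Lambda^+$-modules are both genuinely needed), are exactly the right steps. The only point worth making explicit at the end is that $\theta(a)=a^{-1}\kappa^{\langle\bar a,\bar a\rangle/2}$ is central in $\mathrm{Aut}(\hat\Lambda)$ because every element of $\mathrm{Aut}(\hat\Lambda)$ covers an isometry of $\Lambda$; this justifies both the quotient $2^{24}\cdot Co_0/\langle\theta\rangle\cong 2^{24}\cdot Co_1$ and the passage to the stated exact sequence.
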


Next we shall recall the properties of the corresponding Miyamoto involutions.

\begin{lem}\label{L13} Let $L$ be an even lattice without roots and $e$
a \cvcch in $V_L^+$.
Then, $\tau_e\in C_{Aut(V_{L})}(\theta)$. In particular,  we may view
$\tau_e$ as an element in
$Aut{\hat{L}}(\cong C_{Aut(V_{L})}(\theta))$.
\end{lem}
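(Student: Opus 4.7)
The plan is to show that $\theta$ preserves the Virasoro subalgebra $Vir(e)$ pointwise, hence preserves the three isotypical decompositions defining $\tau_e$, and therefore commutes with $\tau_e$. The second assertion is then a direct application of Lemma \ref{lem:4.11}.

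First I would observe that because $e \in V_L^+$, we have $\theta(e)=e$. Since the Virasoro subalgebra $Vir(e)$ is generated (as a vertex subalgebra) by $e$ alone, $\theta$ restricts to the identity on $Vir(e)$. In particular, for every $v \in V_L$ the element $\theta(v)$ satisfies $L(n)\theta(v) = \theta(L(n)v)$, where $L(n)$ denotes the Virasoro modes coming from $e$; so $\theta$ is a $Vir(e)$-module automorphism of $V_L$.

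Next I would use the isotypical decomposition
\[
V_L = V_e(0) \oplus V_e(\tfrac12) \oplus V_e(\tfrac{1}{16}).
\]
A $Vir(e)$-module automorphism must carry the $L(\tfrac12,h)$-isotypical component to itself, because the irreducible modules $L(\tfrac12, 0)$, $L(\tfrac12,\tfrac12)$, $L(\tfrac12,\tfrac{1}{16})$ are pairwise non-isomorphic. Hence $\theta$ stabilises each $V_e(h)$. Since $\tau_e$ acts as a scalar ($+1$ or $-1$) on each $V_e(h)$, $\theta$ and $\tau_e$ commute on each summand and therefore on all of $V_L$. This gives $\tau_e \in C_{Aut(V_L)}(\theta)$.

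For the final sentence, since $L$ has no roots, Lemma \ref{lem:4.11} provides the isomorphism $C_{Aut(V_L)}(\theta) \cong Aut(\hat{L})$, and we may view $\tau_e$ as an element of $Aut(\hat{L})$ via this identification. I do not anticipate a real obstacle here; the only subtlety to be careful about is justifying that $\theta$ respects the isotypical (not merely the $L(0)$-eigenspace) decomposition, which follows from the remark that the three simple $L(\tfrac12,0)$-modules are mutually non-isomorphic, so the decomposition is canonical and preserved by any $Vir(e)$-linear map.
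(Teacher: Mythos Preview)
Your proof is correct and follows essentially the same idea as the paper: both arguments rest on $\theta(e)=e$. The paper simply invokes the standard identity $\theta\tau_e\theta^{-1}=\tau_{\theta(e)}$ (valid for any VOA automorphism in place of $\theta$), whereas you have unpacked the proof of that identity via the isotypical decomposition; the content is the same.
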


\begin{proof} We view $\tau_e$ as an automorphism of $V_L$. Since $\theta$ fixes $e$, we have $\theta\tau_e\theta=\tau_{\theta(e)}=\tau_e$, which proves this lemma.
\end{proof}

\begin{rem}
 Recall the exact sequence
\[
1 \rightarrow \mathrm{Hom}(\L, \ZZ_2) \rightarrow
Aut(\hat{\L}) \xrightarrow{\pi} Aut(\L){\rightarrow}  1
\]
defined in Theorem \ref{thm:B1}. Hence, by Lemma \ref{L13},
$\pi({\tau}_e)$ is an isometry of $L$ for any \cvcch $e\in V_L^+$.
\end{rem}

\begin{nota}\label{cvcchAA1EE8}
In \cite{LSY}, all \cvcch in the VOA $V_{\L}^+$ were classified.
There are two types of \cvcch.

\textbf{$\bf AA_1$-formula:} conformal vectors supported at $AA_1$-sublattices,
i.e.,$$\omega^\pm({\a})= \frac{1}4 \a(-1)^2\cdot \mathbf{1}\pm \frac{1}4(e^\a +e^{-\a}), \quad \text{ where }\a \in \Lambda(4)=\{\a\in \L\mid
\la \a,\a\ra=4\}.$$

\textbf{$\bf EE_8$-formula:} conformal vectors supported at
$EE_8$-sublattices, i.e., $$\varphi_x(e_M),\quad \text{ where } \L\supset M\cong
EE_8, x\in
M^*.$$
\end{nota}

\begin{lem}
 Let $e$ be a \cvcch in $V_\L^+$.

(1) If $e=\omega^\pm(\a)$, then $\pi(\tau_e)=1$. In fact, $\tau_e=
\varphi_{\a}$ as an automorphism of $V_L$, i.e.,
\[
 \tau_e(u\otimes e^\b) =(-1)^{\la \a, \b\ra} u\otimes e^\b \quad \text{ for }
u\in M(1), \b\in L.
\]

(2) If $e=\varphi_x(e_M)$ for some $M\cong EE_8$ in $\L$, then $\pi(\tau_e)$
defines an isometry of $\L$ which acts as $-1$ on $M$ and $1$ on $ann_\L(M)$.
\end{lem}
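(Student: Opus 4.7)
The proof splits along the two types of \cvcch in $V_\L^+$ classified in \cite{LSY}. For part (1), fix $\a\in\L(4)$ and note that $e=\omega^+(\a)$ lies in the rank-one sublattice VOA $V_{\ZZ\a}$, where $\ZZ\a\cong AA_1$. The key input is the Ising-framed structure of $V_{AA_1}$ and its modules: within $V_{\ZZ\a}$ the vector $\omega^-(\a)$ is a second \cvcch commuting with $\omega^+(\a)$, and the four irreducible $V_{\ZZ\a}$-modules $V_{\ZZ\a+j\a/4}$ for $j\in\ZZ/4$ decompose, as modules over $Vir(\omega^+(\a))\otimes Vir(\omega^-(\a))$, into tensor products of Ising modules $L(1/2,h)\otimes L(1/2,h')$, with an $L(1/2,1/16)$ factor appearing exactly when $j$ is odd.

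To finish (1), I would decompose $V_\L$ as a $V_{\ZZ\a}$-module by grouping summands according to the cosets of $\ZZ\a$ in $\L$. For $\b\in\L$, the projection of $\b$ onto $\QQ\a$ is $(\la\a,\b\ra/4)\a$, so the coset $\b+\ZZ\a$ corresponds to the class $j\equiv\la\a,\b\ra\pmod{4}$ in $\dg{\ZZ\a}\cong\ZZ/4$. Hence $u\otimes e^\b$ lies in the $L(1/2,1/16)$-isotypic part for $Vir(\omega^+(\a))$ iff $\la\a,\b\ra$ is odd, which yields $\tau_{\omega^+(\a)}(u\otimes e^\b)=(-1)^{\la\a,\b\ra}u\otimes e^\b=\varphi_\a(u\otimes e^\b)$. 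The case of $\omega^-(\a)$ is handled identically (the $1/16$-type cosets are the same for either Ising). Since $\varphi_\a$ fixes $M(1)$ pointwise, the induced lattice isometry $\pi(\tau_e)$ is trivial.

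For part (2), I would first reduce to the case $x=0$: since $\varphi_x$ is a VOA automorphism, the standard identity $\tau_{g(f)}=g\tau_fg^{-1}$ yields $\tau_{\varphi_x(e_M)}=\varphi_x\tau_{e_M}\varphi_x^{-1}$, and as $\varphi_x\in \mathrm{Hom}(\L,\ZZ_2)=\ker\pi$ we obtain $\pi(\tau_{\varphi_x(e_M)})=\pi(\tau_{e_M})$. Next, since $M\perp ann_\L(M)$, the subVOA $V_{ann_\L(M)}$ commutes with $V_M\supset Vir(e_M)$; thus $V_{ann_\L(M)}$ lies in the $L(1/2,0)$-isotypic part of $V_\L$ and $\tau_{e_M}$ fixes it pointwise. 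Reading off the action on $v(-1)\vac$ for $v\in ann_\L(M)$ shows $\pi(\tau_{e_M})$ is the identity on $ann_\L(M)$.

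The heart of (2) is showing $\pi(\tau_{e_M})$ acts as $-1$ on $M$. Since $M\cong EE_8$ is rootless, the weight-one subspace of $V_M$ is just $\QQ M$, so it suffices to show each $\xi(-1)\vac$ with $\xi\in\QQ M$ lies in the $L(1/2,1/16)$-isotypic part under $Vir(e_M)$. Using $e_M=\tfrac{1}{16}\omega_M+\tfrac{1}{32}\sum_{\a\in M(4)}e^\a$, I would compute the Ising $L_0$-eigenvalue on $\xi(-1)\vac$: the Virasoro piece contributes $\tfrac{1}{16}\xi(-1)\vac$ since $\xi(-1)\vac$ has conformal weight one, while each $(e^\a)_{(1)}\xi(-1)\vac$ vanishes because the OPE $Y(e^\a,z)\xi(-1)\vac$ begins at $z^{-1}$ rather than $z^{-2}$, as one verifies by expanding the vertex operator $E^-(-\a,z)E^+(-\a,z)e^\a z^\a$ (all nonzero contributions to the $z^{-2}$ coefficient involve $\a(n)e^\a$ for $n>0$, which vanishes). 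Thus $\xi(-1)\vac$ has $L_0^{Vir(e_M)}$-eigenvalue $\tfrac{1}{16}$, forcing it into the $L(1/2,1/16)$-component, so $\tau_{e_M}$ acts by $-1$. Reading off the action on $M(1)_1\cap V_M=\QQ M$ yields $\pi(\tau_{e_M})|_M=-1$, completing the proof. The main obstacle is the explicit vertex-operator verification that $(e^\a)_{(1)}\xi(-1)\vac=0$; once this is in hand, both parts follow cleanly from the Miyamoto characterization of $\tau_e$ together with the Ising-framed structure.
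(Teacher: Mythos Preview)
Your argument is correct. The paper states this lemma without proof, treating it as a standard fact absorbed from the references \cite{LSY,O+102,sh}; it simply records the two formulas and moves on. So there is no ``paper's own proof'' to compare against.

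A few remarks on your write-up. For part (1), the phrase ``cosets of $\ZZ\a$ in $\L$'' is slightly imprecise since $\L/\ZZ\a$ is infinite; what you are really using is that the $V_{\ZZ\a}$-module type of $\CC e^\b$ is determined by the image of $\b$ under projection to $\QQ\a$, i.e., by the class of $\frac{\la\a,\b\ra}{4}\a$ in $\dg{\ZZ\a}\cong\ZZ/4$. Your computation is fine once this is read correctly.

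For part (2), your vertex-operator verification that $(e^\a)_{(1)}\xi(-1)\vac=0$ works, but there is a one-line alternative: $(e^\a)_{(1)}\xi(-1)\vac$ has conformal weight $2+1-1-1=1$ and lies in $M(1)\otimes e^\a$, whose minimal weight is $\la\a,\a\ra/2=2$; hence it vanishes. The same weight count kills $(e^\a)_{(n)}\xi(-1)\vac$ for all $n\ge 1$, so $\xi(-1)\vac$ is genuinely a lowest-weight vector of $L_0^{Vir(e_M)}$-eigenvalue $\tfrac{1}{16}$, and your conclusion $\pi(\tau_{e_M})|_M=-1$ follows. The reduction from $\varphi_x(e_M)$ to $e_M$ and the argument for triviality on $ann_\L(M)$ are both clean.
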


Now let  $V^\natural=V_{\Lambda}^+\oplus
V_{\Lambda}^{T,+}$ be the famous Moonshine. Let $z$ be the linear map of
$V^\natural$ acting
as $1$ and $-1$ on $V_{\Lambda}^+$ and $V_{\Lambda}^{T,-}$
respectively. Then $z$ is an automorphism of $V^\natural$.

\begin{lem}\label{lem:5.12} Let $\alpha\in \Lambda(4)$.
Then $\tau_{\omega^+(\a)} \tau_{\omega^-(\a)}=z$ on $V^\natural$.
\end{lem}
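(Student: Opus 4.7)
The strategy is to verify the identity separately on each of the two summands in the orbifold decomposition $\vnat=V_\L^+\oplus (V_\L^T)^+$ of \eqref{Vn}.

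On $V_\L^+$, the previous lemma (part (1)) says that each of $\tau_{\omega^+(\a)}$ and $\tau_{\omega^-(\a)}$ acts on the untwisted lattice VOA $V_\L$ as the sign automorphism $\varphi_\a:u\otimes e^\b\mapsto(-1)^{\la\a,\b\ra}u\otimes e^\b$. Thus, restricted to $V_\L^+$, their product is $\varphi_\a^2=\mathrm{id}$, which agrees with $z|_{V_\L^+}=1$.

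For the twisted summand $(V_\L^T)^+$, observe that $\omega^+(\a)$ and $\omega^-(\a)$ are mutually commuting Virasoro vectors of central charge $1/2$ whose sum is the Virasoro element of the rank-one sublattice VOA $V_{\ZZ\a}\subset V_\L$. Each Ising vector lies in the commutant of the Virasoro subalgebra generated by the other, so each is fixed by the other's Miyamoto involution. Consequently $\tau_{\omega^+(\a)}$ and $\tau_{\omega^-(\a)}$ commute, and their product is an involution of $\vnat$. By the first paragraph it fixes $V_\L^+$ pointwise, and since $(V_\L^T)^+$ is irreducible as a $V_\L^+$-module (a standard consequence of the FLM construction of $\vnat$), Schur's lemma forces this involution to act on $(V_\L^T)^+$ as a scalar, necessarily $\pm 1$. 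Hence $\tau_{\omega^+(\a)}\tau_{\omega^-(\a)}\in\{1,z\}$ as an automorphism of $\vnat$.

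To rule out the identity, invoke the Miyamoto bijection between $2A$-involutions of $\MM=Aut(\vnat)$ and \cvcch in $\vnat$ (recalled in the introduction): distinct conformal vectors yield distinct Miyamoto involutions. Since $\omega^+(\a)\ne\omega^-(\a)$, the involutions $\tau_{\omega^+(\a)}$ and $\tau_{\omega^-(\a)}$ are distinct, so their product is nontrivial, and the previous paragraph forces it to equal $z$.

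The only delicate step is the commutativity claim $\tau_{\omega^+(\a)}\tau_{\omega^-(\a)}=\tau_{\omega^-(\a)}\tau_{\omega^+(\a)}$, which requires showing that each of $\omega^\pm(\a)$ is actually \emph{centralized} (not merely normalized) by the other's Miyamoto involution; this is where the mutual-commutant structure of the pair $\omega^\pm(\a)$ inside $V_{\ZZ\a}$ is used. Everything else is a direct application of Miyamoto's theory and the standard structure of $\vnat$.
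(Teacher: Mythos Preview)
The paper does not actually supply a proof of this lemma: it is stated in Appendix~B as a recalled fact (the surrounding results are attributed to \cite{sh,LS,FLM}), with no argument given. So there is no ``paper's own proof'' to compare against.

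Your argument is correct and is in fact the standard one. A couple of small comments. First, the commutativity step is fine: since $\omega^+(\a)$ and $\omega^-(\a)$ generate commuting copies of $L(\sfr12,0)$ inside $V_{\ZZ\a}$, each lies in the $h=0$ isotypic component for the other, so $\tau_{\omega^\pm(\a)}$ fixes $\omega^\mp(\a)$ and hence the two involutions commute. Second, the appeal to Schur on $(V_\L^T)^+$ is legitimate, but note you don't strictly need irreducibility: once you know the product is an automorphism of $\vnat$ acting trivially on the sub-VOA $V_\L^+$, it already lies in $\{1,z\}$ because $z$ generates the kernel of the restriction map $C_{\MM}(z)\to Aut(V_\L^+)$. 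Either way the conclusion follows. The final step, invoking the Miyamoto bijection to see $\tau_{\omega^+(\a)}\ne\tau_{\omega^-(\a)}$, is exactly right and is how the result is typically finished in the literature (e.g.\ \cite{LS}).
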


Next, we shall discuss the centralizer of $\tau_e$ and $z$ in
$Aut(V^\natural)$ for any \cvcch $e$ in $V_\Lambda^+$. The following
lemma is well known \cite{LS,FLM}.

\begin{lem}\label{lem:gb1} The centralizer of $z$ in $\Aut V^\natural$ has the
structure $2^{1+24}.Co_1$.
\end{lem}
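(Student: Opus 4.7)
\pf (Plan.)  The strategy is to study the restriction map to $V_\Lambda^+$ and then analyze a small abelian normal subgroup to extract the extraspecial structure.

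First, I would set up a restriction homomorphism.  Since $z$ acts as $+1$ on $V_\Lambda^+$ and $-1$ on $V_\Lambda^{T,+}$, any automorphism of $V^\natural$ commuting with $z$ preserves both summands in $V^\natural=V_\Lambda^+\oplus V_\Lambda^{T,+}$.  Restriction to the first summand gives a group homomorphism
\[
\phi: C_{Aut(V^\natural)}(z)\longrightarrow Aut(V_\Lambda^+).
\]
An element $g\in\ker\phi$ is the identity on $V_\Lambda^+$ and acts on the irreducible $V_\Lambda^+$-module $V_\Lambda^{T,+}$ (irreducibility follows from the orbifold theory used in \eqref{Vn}) as a module automorphism; by Schur's lemma it acts by a scalar, and the requirement that $g$ preserve the VOA structure (e.g.\ intertwining operators from $V_\Lambda^{T,+}\boxtimes V_\Lambda^{T,+}$ to $V_\Lambda^+$) forces the scalar to be $\pm 1$.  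Hence $\ker\phi=\la z\ra$.

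Next, I would prove $\phi$ is onto $Aut(V_\Lambda^+)\cong 2^{24}\cdot Co_1$.  By Lemma~\ref{lem:4.11} and Theorem~\ref{thm:B1}, every element of $Aut(V_\Lambda^+)$ is represented either by an automorphism in $C_{Aut(V_\Lambda)}(\theta)\cong Aut(\hat\Lambda)$ (mod $\la\theta\ra$) or by an element of $\mathrm{Hom}(\Lambda,\ZZ_2)$.  Because $V_\Lambda^T$ is the unique irreducible $\theta$-twisted $V_\Lambda$-module, the conjugation action of $C_{Aut(V_\Lambda)}(\theta)$ lifts to a projective action on $V_\Lambda^T$; this action commutes with the natural $\theta$-lift, so preserves $V_\Lambda^{T,+}$ and combines with the action on $V_\Lambda^+$ to give an automorphism of $V^\natural$ centralizing $z$.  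The sign characters in $\mathrm{Hom}(\Lambda,\ZZ_2)$ also act on $V_\Lambda^T$, via their identification with elements of $\Lambda^*/\Lambda$.  So $\phi$ is surjective, and $C_{Aut(V^\natural)}(z)/\la z\ra\cong 2^{24}\cdot Co_1$.

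Finally, I would identify the extension.  Let $N\cong 2^{24}$ be the image of $\mathrm{Hom}(\Lambda,\ZZ_2)$ in $Aut(V_\Lambda^+)$, which is normal with quotient $Co_1$.  Its preimage $\tilde N:=\phi^{-1}(N)\subset C_{Aut(V^\natural)}(z)$ has order $2^{25}$.  The main calculation is that the commutator map on $\tilde N$ is induced by the natural $\ZZ_2$-valued bilinear form on $\Lambda/2\Lambda$:  for characters $\chi_\alpha,\chi_\beta\in\mathrm{Hom}(\Lambda,\ZZ_2)$ lifted via their action on $V_\Lambda^T$, their commutator equals $(-1)^{\la\alpha,\beta\ra}$ times the identity, which equals $z^{\la\alpha,\beta\ra}$ once one checks the action on $V_\Lambda^{T,+}$.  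Since $\la\cdot,\cdot\ra$ is nondegenerate on $\Lambda/2\Lambda$, this realizes $\tilde N$ as an extraspecial $2$-group $2^{1+24}$ with center $\la z\ra$, and the quotient $C_{Aut(V^\natural)}(z)/\tilde N\cong Co_1$ completes the structure $2^{1+24}.Co_1$.  The principal obstacle is the last commutator computation, which requires an explicit model of the $\mathrm{Hom}(\Lambda,\ZZ_2)$-action on $V_\Lambda^T$; this is precisely what is carried out in \cite{FLM,LS}.
\eop
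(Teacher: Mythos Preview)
The paper does not supply its own proof of this lemma; it simply labels the result as ``well known'' and cites \cite{LS,FLM}.  Your outline is essentially the standard argument one finds in those references: restrict to $V_\Lambda^+$, identify the kernel as $\la z\ra$ by Schur's lemma plus the fusion constraint, lift $Aut(\hat\Lambda)/\la\theta\ra$ back to $V^\natural$ via its projective action on the unique $\theta$-twisted module, and then compute commutators in the preimage of $\mathrm{Hom}(\Lambda,\ZZ_2)$ to exhibit the extraspecial group.  So in spirit you are reproducing exactly what the cited sources do.

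One genuine slip to fix: you write that the sign characters act on $V_\Lambda^T$ ``via their identification with elements of $\Lambda^*/\Lambda$.''  But $\Lambda$ is unimodular, so $\Lambda^*/\Lambda=0$; that cannot be the mechanism.  The correct picture is that $\mathrm{Hom}(\Lambda,\ZZ_2)\cong \Lambda/2\Lambda$ acts on the twisted module through the Clifford/gamma-matrix representation used in the FLM construction of $V_\Lambda^T$, and it is precisely the Clifford relations $\gamma_\alpha\gamma_\beta=(-1)^{\la\alpha,\beta\ra}\gamma_\beta\gamma_\alpha$ that produce the commutator $z^{\la\alpha,\beta\ra}$ you want.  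Once you replace the $\Lambda^*/\Lambda$ remark with this, your sketch matches the literature argument and is correct.
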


The proofs of the following two theorems can be found in \cite{LS}

\begin{thm} Let $\alpha\in\Lambda(4)$.
Set $e=\omega^\varepsilon(\alpha)$, where $\varepsilon=+$ or $-$.
Then the centralizer $C_{\Aut V^\natural}(\tau_e,z)$ has the structure
$2^{2+22}.Co_2$.
\end{thm}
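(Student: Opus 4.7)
The plan is to realize $C_{\Aut V^\natural}(\tau_e,z)$ as the centralizer of $\tau_e$ inside the already-known group $C_{\Aut V^\natural}(z)\cong 2^{1+24}.Co_1$ of Lemma~\ref{lem:gb1}. The first task is to locate $\tau_e$ in that extension. Because $e=\omega^\varepsilon(\alpha)\in V_\Lambda^+\subset V^\natural$, the Miyamoto involution $\tau_e$ preserves $V_\Lambda$ and, by Notation~\ref{cvcchAA1EE8} and the lemma immediately following it, acts on $V_\Lambda$ as $\varphi_\alpha$: trivially on $\Lambda$ itself and by the sign character $(-1)^{\la\alpha,\cdot\ra}$ on $\CC[\Lambda]$. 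Via the exact sequence
\[
1\longrightarrow\mathrm{Hom}(\Lambda,\ZZ_2)\longrightarrow\Aut V_\Lambda^+\xrightarrow{\pi}\Aut(\Lambda)/\la\pm 1\ra\longrightarrow 1,
\]
this forces $\pi(\tau_e)=1$, so $\tau_e$ lies in the extraspecial normal subgroup $2^{1+24}$ of $C(z)$, and projects to the coset $\bar\alpha\in\Lambda/2\Lambda$ under the identification $2^{1+24}/\la z\ra\cong 2^{24}\cong\Lambda/2\Lambda$. (Both choices $\varepsilon=\pm$ give the same image in $2^{1+24}/\la z\ra$, so differ only by multiplication by $z$ inside $C(z)$, consistent with Lemma~\ref{lem:5.12}.)

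Next I compute the centralizer layer by layer. The commutator map on $2^{1+24}/\la z\ra\cong\Lambda/2\Lambda$ is the $\ZZ_2$-valued symplectic form $\la\cdot,\cdot\ra\bmod 2$, nondegenerate because $\Lambda$ is unimodular. Hence the centralizer of $\tau_e$ in $2^{1+24}$ is the preimage $H$ of the hyperplane $\bar\alpha^\perp$, of order $2^{24}$. Because $\la\alpha,\alpha\ra=4\equiv 0\pmod 2$, we have $\bar\alpha\in\bar\alpha^\perp$, so $\tau_e\in H$ and $\la z,\tau_e\ra\le Z(H)$. The radical of the restricted form on $\bar\alpha^\perp$ is exactly $\bar\alpha^\perp\cap\la\bar\alpha\ra=\la\bar\alpha\ra$, so $Z(H)=\la z,\tau_e\ra\cong 2^2$ and $H/Z(H)\cong\bar\alpha^\perp/\la\bar\alpha\ra\cong 2^{22}$, giving $H$ the shape $2^{2+22}$.

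Finally I pass to the outer $Co_1$-quotient. The group $C(z)/2^{1+24}\cong Co_1$ acts on $\Lambda/2\Lambda$ as $Co_1$ on its natural module, and a standard fact (the $Co_0$-action on $\Lambda(4)$ is transitive with point stabilizer $Co_2$, and $\alpha,-\alpha$ give the same coset mod $2\Lambda$) identifies the stabilizer of $\bar\alpha$ as $Co_2$. Since this $Co_2$ fixes $\bar\alpha$, any lift of it to $C(z)$ normalizes $\tau_e$ up to an element of $2^{1+24}\cap C(\tau_e)=H$, so $C(\tau_e,z)/H\cong Co_2$ and we obtain
\[
1\longrightarrow 2^{2+22}\longrightarrow C_{\Aut V^\natural}(\tau_e,z)\longrightarrow Co_2\longrightarrow 1,
\]
which is the claimed shape $2^{2+22}.Co_2$.

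The main obstacle is the $2$-local bookkeeping in the middle paragraph: one must verify non-degeneracy of the form on $\Lambda/2\Lambda$ (from unimodularity of $\Lambda$), that $\bar\alpha\in\bar\alpha^\perp$ (from $\la\alpha,\alpha\ra$ being even), and compute the radical of the restricted form on the hyperplane $\bar\alpha^\perp$ precisely, in order to pin down $Z(H)$ as exactly $2^2$ and not merely as containing $\la z,\tau_e\ra$. Everything else reduces to the already established structure of $C(z)$ and standard facts about $Co_1$-orbits on $\Lambda/2\Lambda$.
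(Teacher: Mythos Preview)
The paper does not actually prove this theorem; it simply states that the proof can be found in \cite{LS}. Your proposal supplies a direct argument, and the strategy is the standard one (and presumably close to what \cite{LS} does): locate $\tau_e$ inside the extraspecial normal subgroup of $C(z)\cong 2^{1+24}.Co_1$, compute its centralizer there via the commutator form on $\Lambda/2\Lambda$, and identify the outer quotient with the $Co_1$-stabilizer of $\bar\alpha$, namely $Co_2$. The first two paragraphs are correct as written; the identification $2^{1+24}/\la z\ra\cong\Lambda/2\Lambda$ with commutator form equal to $\la\cdot,\cdot\ra\bmod 2$ is exactly the standard description of $O_2(C(z))$, and your radical computation on $\bar\alpha^\perp$ is right.

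One sentence deserves tightening. In the last step you write that a lift of an element of $Co_2$ ``normalizes $\tau_e$ up to an element of $2^{1+24}\cap C(\tau_e)=H$''. What you need is surjectivity of $C(\tau_e,z)\to Co_2$, and the adjustment that achieves this is by an element of $2^{1+24}\setminus H$, not of $H$. Concretely: a lift $g$ of $\bar g\in Co_2$ satisfies $g\tau_e g^{-1}\in\{\tau_e,\,z\tau_e\}$ since $\bar g$ fixes $\bar\alpha$; in the second case pick any $h\in 2^{1+24}\setminus H$, so $[h,\tau_e]=z$, and then $gh$ centralizes $\tau_e$. With that correction the argument is complete. (You are right that the only coset of $2\Lambda$ representatives of norm $4$ in $\alpha+2\Lambda$ are $\pm\alpha$, so the $Co_1$-stabilizer of $\bar\alpha$ really is $Co_2$.)
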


If $e=\varphi_x(e_M)$, it turns out that the centralizer of $\tau_e$
in $C_{\Aut V^\natural}(z)$ also stabilizes the VOA $V_M^+\subset
V_\Lambda^+$. Moreover, we have

\begin{thm}\label{thm:E8} Let $M$ be a sublattice of $\Lambda$ isomorphic to
$EE_8$
and $x$ a vector in $M/2$.
Set $e=\varphi_x(e_M)$.
Then the centralizer $C_{\Aut V^\natural}(\tau_e,z)$ has the structure
$2^{2+8+16}.\Omega^+(8,2)$.
\end{thm}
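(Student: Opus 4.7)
The strategy is to work inside the $2B$-centralizer $C_{\text{Aut}V^\natural}(z) \cong 2^{1+24}.Co_1$ guaranteed by Lemma \ref{lem:gb1}, using that $\tau_e$ lies in $\text{Aut}(V_\Lambda^+) \subset C(z)$ and so commutes with $z$ automatically. Thus $C_{\text{Aut}V^\natural}(\tau_e,z) = C_{C(z)}(\tau_e)$. The exact sequence from Theorem \ref{thm:B1} and Lemma \ref{lem:gb1} exhibits an extraspecial normal subgroup $E \cong 2^{1+24}_+$ of $C(z)$ with $E/\langle z\rangle \cong \text{Hom}(\Lambda,\FF_2) \cong \Lambda/2\Lambda$ and $C(z)/E \cong Co_1$. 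I will compute the image of $C_{C(z)}(\tau_e)$ in $Co_1$ and its intersection $C_E(\tau_e)$ with $E$ separately and then assemble.

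For the image in $Co_1$: by the $EE_8$-formula (Notation \ref{cvcchAA1EE8}), $\pi(\tau_e)$ is the class in $O(\Lambda)/\{\pm 1\}$ of the isometry $\theta_M$ which acts as $-1$ on $M$ and as $+1$ on $N := \text{ann}_\Lambda(M)$. Hence the image of $C_{C(z)}(\tau_e)$ in $Co_1$ is contained in $\text{Stab}_{Co_0}(M)/\{\pm 1\}$. For $M \cong EE_8$ in $\Lambda$, the stabilizer $\text{Stab}_{Co_0}(M)$ is constrained by compatibility with the glue code $\Lambda/(M\perp N) \subset \dual M/M \oplus \dual N/N$: its restriction map to $O(M)\cong W(E_8)$ has image exactly the derived subgroup $W(E_8)'$, not all of $W(E_8)$. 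This is the ``loss of half the Weyl group'' phenomenon foreshadowed in the Introduction, and arises because the outer coset of $W(E_8)'$ fails to preserve the glue code. Modulo $\{\pm 1\}$, $W(E_8)'$ gives the simple quotient $\Omega^+(8,2)$, which will be the top section of the centralizer.

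For the kernel in $E$: conjugation by $\tau_e$ on $E/\langle z\rangle \cong \Lambda/2\Lambda$ factors through the $\theta_M$-action, so $C_E(\tau_e)$ is the preimage of the $\theta_M$-fixed subgroup. Since $(\theta_M-1)v = -2\pi_M(v)$ (where $\pi_M$ is orthogonal projection onto $M\otimes\QQ$), the fixed subgroup consists of those $v+2\Lambda$ with $\pi_M(v) \in M$, and its $\FF_2$-codimension equals $\dim_{\FF_2}\pi_M(\Lambda)/M$, computable from the concrete glue structure for $M\cong EE_8$ in $\Lambda$. Combined with the 2-part of the image in $Co_1$ — coming from the kernel of the restriction $\text{Stab}_{Co_0}(M)\to O(M)$, which acts on $N$ through an extraspecial 2-group controlled by $\text{Hom}(\dual N/N,\FF_2)$ — the contributions assemble into the normal 2-subgroup of shape $2^{2+8+16}$, where $2^2$ comes from the center ($\langle z\rangle$ together with a central element of the $E$-part), $2^8$ from the $M$-direction characters, and $2^{16}$ from the $N$-direction characters.

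The main obstacle will be the image calculation: verifying rigorously that the restriction $\text{Stab}_{Co_0}(M)\to O(M)$ has image equal to $W(E_8)'$ rather than all of $W(E_8)$. This amounts to identifying the glue code $\Lambda/(M\perp N)$ concretely as an $O(M)$-module inside $\dual M/M \oplus \dual N/N$ and checking that exactly the subgroup $\Omega^+(8,2)$ of $O^+(8,2) = W(E_8)/\{\pm 1\}$ stabilizes it. This is a nontrivial module-theoretic step, analogous on the VOA side to the triality analysis of Section 5 that produces $2{\cdot}Alt_9$ from the $Sym_9$ on the lattice side. Once this is in place and combined with the above kernel count, assembling the extension data yields $C_{\text{Aut}V^\natural}(\tau_e,z)$ of structure $2^{2+8+16}.\Omega^+(8,2)$ as claimed.
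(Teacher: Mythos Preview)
The paper does not prove this theorem: the line just before the two theorems reads ``The proofs of the following two theorems can be found in \cite{LS}.'' So there is no in-paper argument to compare your sketch against; the result is simply imported from Lam--Shimakura.

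That said, your outline is the natural route and is essentially the one taken in \cite{LS}: sit inside $C(z)\cong 2^{1+24}.Co_1$, observe that $z$ fixes $e$ so that $\tau_e$ commutes with $z$, and compute $C_{C(z)}(\tau_e)$ via the extension $1\to E\to C(z)\to Co_1\to 1$. One small correction: $Aut(V_\Lambda^+)\cong C(z)/\langle z\rangle$, not a subgroup of $C(z)$, so the phrase ``$\tau_e\in Aut(V_\Lambda^+)\subset C(z)$'' should be replaced by the argument $z(e)=e\Rightarrow z\tau_e z^{-1}=\tau_{z(e)}=\tau_e$. Your fixed-point computation on $\Lambda/2\Lambda$ is correct and gives a $16$-dimensional fixed space (indeed equal to $(N+2\Lambda)/2\Lambda$ with $N=ann_\Lambda(M)$), so $C_E(\tau_e)$ has order $2^{17}$; together with $\tau_e$ itself this accounts for the normal $2$-subgroup. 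The substantive gap you flag --- the image in $Co_1$ --- is exactly the crux in \cite{LS}: one uses that $N\cong BW_{16}$, the rank-$16$ Barnes--Wall lattice, with $O(BW_{16})\cong 2^{1+8}_+{\cdot}\Omega^+(8,2)$, together with the fact that the glue $\Lambda/(M\perp N)$ is all of $\dg{N}$, so that the stabilizer in $O(M)\times O(N)$ restricts onto $O(M)$ with image $W(E_8)'$ rather than $W(E_8)$. This is a Barnes--Wall/glue-code analysis and not the triality argument of Section~5 of the present paper, so your parenthetical analogy to that section is misleading, though your instinct that a ``half is lost'' through a module-theoretic constraint is correct.
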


\begin{rem}
Let $M$ be a sublattice of $\Lambda$ isomorphic to $EE_8$. Then the
stabilizer of $e_M$ in the subgroup
$Aut(\hat{M})/\langle\theta\rangle$ of $\Aut V_M^+$ is isomorphic to
$Aut(M)/\langle-1\rangle\cong O^+(8,2)$. In Theorem \ref{thm:B1},
the centralizer $C_{\Aut V^\natural}(\tau_{e_M},z)$ actually acts on
$V_M^+$ as $\Omega^+(8,2)$, which is the quotient of the commutator
subgroup of the Weyl group of $E_8$ by its center.

Let $(M, M')$ be an $EE_8$-pair in $\Lambda$. Then we have
\[
C_{Aut(V^\natural)}(\tau_{e_{M}},
\tau_{e_{M'}},z)=C_{Aut(V^\natural)}(\tau_{e_{M}}, z)\cap
C_{Aut(V^\natural)}(\tau_{e_{M'}},z).
\]
In this case,
$C_{Aut(V^\natural)}(\tau_{e_{M}},
\tau_{e_{M'}},z)$ must contain a factor
group which is isomorphic to the common stabilizer of $M$ and $M'$ in
$Aut(\L)/{\pm 1}$.
\end{rem}

\section{Niemeier lattices that contain $Q\perp R$}\label{QRinN}

In this section, we shall list the Niemeier lattices that contain
$Q\perp R$ such that $R$ is the fixed point sublattice of an
isometry of order $3$ and $Q$ is its annihilator.

Our setting is as follows: Let $h$ be an order $3$ element of
$Weyl(A_2)\cong Sym_3$. Then $h$ defines an isometry on
$Q=A_2\otimes E_8$ by $h(\a\otimes \b) =(h\a)\otimes \b$. It also
induces an isometry on $Q\perp R$ and $(Q\perp R)^*$ by acting
trivially on $R$.

Now let $N$ be a Niemeier lattice that contains $Q\perp R$. We
assume that $N$ is stable under $h$ and $Q$ and $R$ are direct
summands in $N$. In this case, the fixed point sublattice of $h$ in
$N$ is exactly $R\cong EEE_8$ and the annihilator of $R$ is
$ann_N(R)= Q = A_2\otimes E_8$.

The list of all possible Niemeier lattices (including the Leech
lattice) that satisfy the above is given below.

\[
\begin{array}{c|c}
 \hline
\text{Type of Niemeier} &  C_{O(N)}(h)\cr \hline
A_8^3  &3\times (2 \times Sym_9)  \cr
 D_8^3 & 3\times 2^7{:}Sym_8\cr
E_8^3  &3\times Weyl(E_8)\cr
A_2^{12}  & 3\times ( (Sym_3)^4. C_{Aut(\mathcal{TG})}(h)) \cr
A_1^{24} & 3\times 2^8.L_2(7)\cr
D_4^6 & 3\times (Weyl({D_4})\times Weyl({D_4})).3  \cr
\L  &3\times 2{\cdot}Alt_9  \cr
\hline
\end{array}
\]
Here $\mathcal{TG}$ is the ternary Golay code.

\medskip

\noindent\textbf{Sketch of the proof.}

Let $N=N(\Phi)$ be a Niemeier lattice associated to a root system $\Phi$.

We shall first search for the element of $O(N)$ of order $3$, which acts fixed
point freely on roots.

Let $h\in O(N)$ be such an element. Suppose $h$ preserves an irreducible component of $\Phi$, say,
$\Phi_1$.    Then $g$ also acts on the corresponding root sublattice $L_1:=span_\ZZ\{\Phi_1\}$.
In this case, $\Phi_1$ is isomorphic to $A_{3n}$, $D_{3n}$, $D_{3n+1}$, $E_6$ or $E_8$ since $h$ acts fixed point freely on roots. Then by case by case checking, $ann_{L_1}(L_1^h)$ must contain roots.

Therefore, $h$ induces a permutation on the irreducible components of $\Phi$ and has no fixed points.
Thus, $\Phi$ must be one of the followings:
\[
A_8^3,\ D_8^3,\ E_8^3, \ A_4^6,\  D_4^6,\  A_2^{12},\  A_1^{24}\  \text{ or }\ \emptyset.
\]

For $N=N(A_4^6)$, the glue code $C$ is generated by $(101441)$, $(114410)$, $(144101)$, $(141014)$, and $(110144)$ and $[N(A_4^6):A_4^6]=125$. Nevertheless, there is no element in $Sym_6$ of cycle shape $3^2$ which preserves $C$. Thus, $N(A_4^6)$ is also out.

\medskip

The explicit embedding of $Q\perp R$  for the remaining cases are given below.

\medskip

\noindent{\bf Case: $\bf  N(A_8^3)$}

$[N(A_8^3): A_8^3]=3^3$ and the glue code is generated by $(114), (141)$ and
$(411)$.

Then $Aut\,N(A_8^3)\cong W(A_8^3). (2\times S_3)$. Let $\sigma$ be the cyclic
permutation of the 3 copies of $A_8$.

Set
\[
 R=span\{ (\a,\a,\a| \a\in A_8\}\cup \{(\gamma, \gamma, \gamma)\}\cong
\sqrt{3}E_8,
\]
and
\[
 Q=span\{(\a,-\a,0), (0, \a, -\a)| \a\in A_8\}\cup \{(\gamma, -\gamma, 0),
(0,\gamma, -\gamma)\}\cong A_2\otimes E_8,
\]
where $\gamma=\frac{1}3(1^6, -2^3)$.  Note that $(3,3,3), (3,-3,0)$ and
$(0,3,-3)$ are in the glue code and $R = N(A_8^3)^\sigma$.

In this case,
\[
 C_{Aut\, N(A_8^3)} (\sigma) = 3\times (2\times Sym_9),
\]
where $Sym_9$ acts diagonally on $A_8^3$.

\medskip

\noindent{\bf Case: $\bf N(D_8^3)$}

$[N(D_8^3:D_8^3]=2^3$ and the glue code is generated by $(122), (212), (221)$.

Then $Aut\, N(D_8^3)\cong Weyl(D_8)\wr S_3$. Let $\sigma$ be the
cyclic permutation of the 3 copies of $D_8$. Then
\[
 N(D_8^3)^\sigma= R= span\{(\a,\a,\a)|\a\in D_8\}\cup \{(\gamma, \gamma,
\gamma)\}\cong \sqrt{3}E_8
\]
and
\[
\begin{split}
 Q&=ann_N(R)\\
&=span \{ (\a,-\a,0), (0,\a,-\a) | \a\in D_8\} \cup \{ (\gamma',-
\gamma,0), (0,\gamma', -\gamma'\}\\
&\cong A_2\otimes E_8,
\end{split}
\]
where $\gamma=\frac{1}2(11111111), \gamma'=\frac{1}2(1111 111-1)$.

In this case,
\[
 C_{Aut\,N(D_8^3)}(\sigma)= 3\times Weyl({D_8})\cong 3\times (2^7: Sym_8).
\]

\noindent{\bf Case: $\bf E_8^3$}

In this case, $Aut\, E_8^3\cong Weyl({E_8})\wr S_3$. Let $\sigma$ be
the cyclic permutation of the 3 copies of $E_8$. Then
\[
(E_8^3)^\sigma= R= span\{(\a,\a,\a)\mid\a\in E_8\}\cong EEE_8
\]
and
\[
\begin{split}
 Q&=ann_{E_8^3}(R)=span \{ (\a,-\a,0), (0,\a,-\a) \mid \a\in E_8\} \cong A_2\otimes E_8.
\end{split}
\]
Moreover,
\[
 C_{Aut\,E_8^3)}(\sigma)\cong 3\times Weyl({E_8}).
\]

\noindent{\bf Case: $\bf  N(A_1^{24})$}

The glue code is isomorphic to $G_{24}$ and $[N(A_1^{24}):
A_1^{24}]=2^{12}$.

In this case, $Aut\, N(A_1^{24}) \cong 2^{24}.M_{24}$.

Let $\sigma$ be the order 3 automorphism which has the shape $3^8\in M_{24}$.
Let $\mathcal{C}$ be the subcode generated by the $14$ dodecads fixed by
$\sigma$. Then $\mathcal{C}$ is isomorphic to the tripled Hamming code.
Then,
\[
 N(A_1^{24})^\sigma =R= span\{ (\a,\a,\a)| \a\in A_1^8\}\cup \{\frac{1}2 \a_C|
C\in \mathcal{C}\}\cong \sqrt{3}E_8.
\]
Let $\mathcal{T}$ be the set of sextets that are fixed by $\sigma$ and $\{O_1,
O_2, O_3\}$ the trio fixed by $\sigma$. Then $|\mathcal{T}|=7$. Set
\[
 \mathcal{H}=\{T\subset O_1| T \text{ is a tetrad of a sextet in
}\mathcal{T}\}\cup\{\emptyset, O_1\}
\]
Then $\mathcal{H}$ is isomorphic to the Hamming code.
Then
\[
\begin{split}
 Q&=span\{\a, -\a,0), (0,\a, -\a)|\a\in A_1^8\}\cup \{
\frac{1}2(\a_T,-\a_T,0),\frac{1}2 (0,\a_T,-\a_T)| T\in \mathcal{H}\}\\
&\cong
A_2\otimes E_8
\end{split}
\]

In this case, the centralizer is
\[
 C_{Aut\, N(A_1^{24})}= 2^8. C_{M_{24}}(\sigma)= 3\times 2^8. L_2(7)
\]

\noindent{\bf Case: $\bf  N(D_4^6)$}

$D_4^*/D_4\cong F_4$ and the glue code is the Hexacode.

Let $\sigma=(135)(246)$. Then $\sigma$ fixes a subcode $I$
generated by $(1\omega\ 1\omega\ 1\omega )$ and $|I|=2^2$.
Set
\[
 R=span\{ (\a,\b,\a, \b, \a, \b)| \a,\b\in D_4\}\cup \{ \a_c| c\in I\} \cong
\sqrt{3}E_8,
\]
where $\a_c=([c_1], [c_2], \dots, [c_6])$ if $c=(c_1, c_2, \dots, c_6)$ and
$[1]=(000-1), [\omega]=\frac{1}2(1111), [\bar{\omega}] = \frac{1}2(-1-1-1 1)$.
\[
 \begin{split}
  Q=&span\{ (\a, \b, -\a, -\b, 0,0), (0,0, \a, \b,-\a, -\b)| \a, \b\in D_4
\}\\
&\quad \cup \{ ([c],[c], -[c],-[c],0,0), (0,0, [c],[c],-[c],-[c])| c\in \{1,
\omega, \bar{\omega}\}\}\\
&\cong A_2\otimes E_8.
 \end{split}
\]

\[
 C_{Aut\, N(D_4^6)} = (Weyl(D_4)\times Weyl({D_4})). C_{Aut\ Hexacode}(\sigma) \cong
(2^3Sym_4\times2^3 Sym_4).3.2
\]

\medskip

\noindent{\bf Case: $\bf  N(A_2^{12})$}

The glue code is the ternary Golay code $\mathcal{TG}$ and
$[N(A_2^{12}):A_2^{12}]=3^6$. $Aut(\mathcal{TG})=2.M_{12}$.

Let $\sigma$ be an order $3$ element given by
\[
\scalebox{0.7} 
{
\begin{pspicture}(0,-1.5)(4.0,1.5)
\definecolor{color0c}{rgb}{0.5019607843137255,0.5019607843137255,
0.5019607843137255}
\rput(0.0,-1.5){\psgrid[gridwidth=0.028222222,subgridwidth=0.014111111,
gridlabels=0.0pt,subgriddiv=0,subgridcolor=color0c](0,0)(0,0)(4,3)}
\psdots[dotsize=0.18](0.5,0.98)
\psdots[dotsize=0.18](0.5,-0.04)
\psdots[dotsize=0.18](0.5,-1.02)
\psdots[dotsize=0.18](1.52,0.98)
\psdots[dotsize=0.18](1.48,-0.04)
\psdots[dotsize=0.18](1.5,-1.04)
\psdots[dotsize=0.18](2.5,0.96)
\psdots[dotsize=0.18](2.5,-0.06)
\psdots[dotsize=0.18](2.5,-1.06)
\psdots[dotsize=0.18](3.5,-1.04)
\psdots[dotsize=0.18](3.5,-0.02)
\psdots[dotsize=0.18](3.52,0.96)
\psline[linewidth=0.07cm](0.5,0.96)(0.5,-0.14)
\psline[linewidth=0.07cm,arrowsize=0.2cm
2.0,arrowlength=1.4,arrowinset=0.4]{->}(0.5,-0.1)(0.5,-1.0)
\psline[linewidth=0.07cm](1.46,1.02)(2.5,-0.08)
\psline[linewidth=0.07cm](1.46,-0.06)(2.56,-1.14)
\psline[linewidth=0.07cm](1.48,-1.04)(2.5,1.02)
\psline[linewidth=0.07cm,arrowsize=0.2cm
2.0,arrowlength=1.4,arrowinset=0.4]{->}(2.5,-0.06)(3.52,0.92)
\psline[linewidth=0.07cm,arrowsize=0.2cm
2.0,arrowlength=1.4,arrowinset=0.4]{->}(2.5,-1.08)(3.52,-0.04)
\psline[linewidth=0.07cm,arrowsize=0.2cm
2.0,arrowlength=1.4,arrowinset=0.4]{->}(2.52,0.96)(3.48,-1.04)
\end{pspicture}
}
\]
Then $\sigma$ fixes a subcode $\mathcal{C}$ generated by
\[
 \begin{array}{|c|c|c|c|}
  \hline
  \ \ & + & + & + \\
  \ & + & + & + \\
  \ & + & + & + \\
\hline
 \end{array}
\qquad
 \begin{array}{|c|c|c|c|}
  \hline
  +& + &  &  + \\
  + & - & + & - \\
  + &  & -  & \\
\hline
 \end{array}
\]
Then  $\mathcal{C}$ is isomorphic to a triple of tetra-code and
\[
 R=span\{(\a, \a,\a)| \a\in A_2^4\} \cup \{\a_c|c\in \mathcal{C}\}\cong
\sqrt{3}E_8,
\]
where $\a_c=([c_1], [c_2], \dots, [c_{12}])$ and
$[0]=(0,0,0)$, $[1]=\frac{1}3(1,1,-2)$,\\ $[2]=\frac{1}3(-1,-1,2)$.

Let $\mathcal{H}$ be the subcode generated by
\[
 \begin{array}{|c|c|c|c|}
  \hline
  \ \ & + & - &\ \  \\
  \ & + & - &  \\
  \ & + & - &  \\
\hline
 \end{array}
\qquad
 \begin{array}{|c|c|c|c|}
  \hline
  +& + &  &  \ \ \\
  - & - & - &  \\
   &  &  +& \\
\hline
 \end{array}
\]
\[
 \begin{array}{|c|c|c|c|}
  \hline
  \ \ &\ \ & + & -   \\
  \ && + & -  \\
  \ && + & -   \\
\hline
 \end{array}
\qquad
 \begin{array}{|c|c|c|c|}
  \hline
    &\ \ &  & - \\
  + &    &+ & + \\
  - &    &- &   \\
  \hline
 \end{array}
\]
Then
\[
 Q=ann_{N(A_2^{12})}(R) = span{ A_2\otimes (A_2^4)}\cup \{ \a_c| c\in
\mathcal{H}\},
\]
where $A_2\otimes E_8$ is generated by
\[
 \begin{array}{|c|c|c|c|}
  \hline
  \a &\ \b & -\delta &\ \  \   \\
  -\a&\gamma& -\b &   \\
  \ & \delta & -\gamma &    \\
\hline
 \end{array}
\qquad
 \begin{array}{|c|c|c|c|}
  \hline
  &\ \ \ & \delta& -\b\\
  \a & &\ \b & -\gamma   \\
  -\a&&\ \gamma& -\delta    \\
  \hline
 \end{array}
\]

In this case, the centralizer is
\[
 C_{Aut\,N(A_2^{12})}(\sigma)= (S_3\times S_3\times S_3\times S_3).
C_{Aut(\mathcal{TG})}(\sigma).
\]

\noindent{\bf Case: Leech lattice $\bf \L$}

This case was treated in \cite{glee8}.

\section{Centralizers of pairs of $2A$-involutions for the $3C$-case}
\label{App:D}

For background in this section, see \cite{grfg,gr12}

We take a $3C$-pair of $2A$ involutions, $x, y$, and study $C(x,y)$
(meaning $C_{\MM}(\la x, y \ra )$) and $C(x, y, z)$, where $z\in 2B$
and $z\in C(x, y)$.

Consider the $3C$-element $h:=xy$. We have $C(h)=F\times \la h \ra$,
where $F\cong F_3$, a simple group of order $2^{15}{\cdot}3^{10}5^3{\cdot}
7^2{\cdot}13{\cdot}19{\cdot}31$. The group $F$ has one class of involutions and
they are contained in the $2B$ class of $\MM$. We take $z\in F$ and
$\la x, y \ra = C(F)$.

Let us go to $C:=C(z)\cong 2^{1+24}{\cdot}Co_1$. This is a twisted
holomorph in the sense of \cite{grfg,grmont}. The element $h$ is in
$C$ and corresponds in $O(\Lambda )$ to an element $h'$ of order 3
which is a permutation in the natural $M_{24}$ of cycle shape $3^8$.
Its centralizer in $O(\Lambda)$ has the form $3 \times 2{\cdot
}Alt_9$. Therefore, $C_C(h)$ has shape $2^{1+8}.Alt_9$.

There exist involutions $x' , y' \in O(\Lambda )$ of trace 8 so that
$h'=x'y'$.  If we choose $x, y\in C$ to correspond to such
involutions, then $\la x, y \ra$ centralizes $C_C(h)$. Then we get
$C(x,y,z)=F$ and $\la x, y, C(h) \ra = \la x, y \ra \times F$.

\bigskip

\end{document}